\documentclass[10pt]{amsart}

\usepackage{amsthm,amsmath,amssymb}
\usepackage{mathtools}
\usepackage{graphicx}
\usepackage[comma, sort&compress]{natbib}
\usepackage{tabularx}
\usepackage{multirow}
\usepackage{amssymb}
\usepackage[utf8]{inputenc}
\usepackage{setspace}
\usepackage{subfig}
\usepackage{leftidx}

\usepackage{paralist}
%% \graphicspath{}
\RequirePackage{hyperref}

\linespread{1.1}

\newtheorem{theorem}{Theorem}

\newtheorem{lemma}[theorem]{Lemma}

\newtheorem{corollary}[theorem]{Corollary}

\theoremstyle{definition}

\theoremstyle{remark}

\numberwithin{equation}{section}
\numberwithin{theorem}{section}
%% \numberwithin{lemma}{section}
%% \numberwithin{proposition}{section}
%% \numberwithin{corollary}{section}
\numberwithin{example}{section}
\numberwithin{definition}{section}
%% \numberwithin{remark}{section}

\numberwithin{figure}{section}

       %for ancestors
       %for descendants
     % for district
       %  for children
       % for parents
      % for neighbours NOTE   \nei not \ne
       %for Markov blanket
       %for siblings
       %for half-trek-reachable nodes (dependencies)

\DeclareMathOperator{\diag}{diag}

\DeclareMathOperator{\iden}{{\bf I}_m}
\DeclareMathOperator{\cormat}{{\bf R}}

%----------------------------------------------------------
% Reference macros
%----------------------------------------------------------

\newcommand{\secref}[1]{Section~\ref{sec:#1}}
\newcommand{\secsref}[1]{Sections~\ref{sec:#1}}
\newcommand{\secssref}[1]{\ref{sec:#1}}

\newcommand{\appref}[1]{Appendix~\ref{app:#1}}

\newcommand{\lemref}[1]{Lemma~\ref{lem:#1}}
\newcommand{\lemsref}[1]{Lemmas~\ref{lem:#1}}
\newcommand{\lemssref}[1]{\ref{lem:#1}}

\newcommand{\thmref}[1]{Theorem~\ref{thm:#1}}

% \makeatletter
% \def\@strippedMR{}
% \def\@scanforMR#1#2#3\endscan{%
%   \ifx#1M\ifx#2R\def\@strippedMR{#3}%
%   \else\def\@strippedMR{#1#2#3}%
%   \fi\fi}
% \renewcommand\MR[1]{\relax
%   \ifhmode\unskip\spacefactor3000 \space\fi
%   \@scanforMR#1\endscan
%   MR\MRhref{\@strippedMR}{\@strippedMR}}
% \makeatother

%%%%%%%%%%%%%%%%%%%%%%%%%%%%%%%%%%%%%%%%%%%%%%%%%%%%%
%%%%%%%%%%%%%%%%%%%%%%%%%%%%%%%%%%%%%%%%%%%%%%%%%%%%%

\title[]{Asymptotic power of Rao's score test for independence in high dimensions}

\author[D.~Leung]{Dennis Leung}
\address{Department of Statistics, Chinese University of Hong Kong, Shatin, Hong Kong}
\email{dmhleung@uw.edu}

\author[Q.M.~Shao]{Qi-Man Shao}
\address{Department of Statistics, Chinese University of Hong Kong, Shatin, Hong Kong}
\email{qmshao@sta.cuhk.edu.hk}

\begin{document}

\begin{abstract}
Let $\cormat$ be the Pearson correlation matrix of $m$ normal random variables. The Rao's score test for the independence hypothesis $H_0 : \cormat = \iden$, where $\iden$ is the identity matrix of dimension $m$, was first considered by  \citet{Schott2005} in the high dimensional setting.  In this paper, we study the asymptotic exact power function of this test, under an asymptotic regime in which both $m$ and the sample size $n$ tend to infinity with the ratio $m/n$ upper bounded by a constant. In particular, our result implies that the Rao's score test is minimax rate-optimal for detecting the dependency signal $\|\cormat - \iden\|_F$ of order $\sqrt{m/n}$, where $\|\cdot\|_F$ is the matrix Frobenius norm.

\end{abstract}

\keywords{}

\subjclass[2000]{62H05}

\maketitle

\section{Introduction} \label{sec:intro}
Let $(X_1, \dots, X_m)'$ be an $m$-variate normal vector with population Pearson correlation matrix denoted by ${\bf R} = (\rho_{pq})_{1 \leq p, q \leq m}$. Suppose we observe $n$ independent samples $X_{p1}, \dots, X_{pn}$ for each component $X_p$, $1 \leq p \leq m$. When the dimension $m$ can be larger than the sample size $n$, \citet{Schott2005} was the first to consider the Rao's score statistic
\begin{equation} \label{Schottstat}
T = \sum_{ 1 \leq p < q \leq m} \hat{\rho}_{pq}^2,
\end{equation}
for testing the independence null hypothesis
\begin{equation}\label{nullIndep}
H_0: {\bf R} = {\bf I}_m,
\end{equation}
where $\hat{\rho}_{pq}$, $1 \leq p \not= q \leq m$ is the sample correlation of the pair $(X_p, X_q)$ computed from the data, and ${\bf I}_m$ is the $m$-by-$m$ identity matrix. It was shown to be asymptotically normal under $H_0$ as both $m$ and $n$ go to infinity with the ratio $m/n$ converging to a positive constant.
%This result has subsequently been refined by \citet{ChenShao}, who proved a Berry-Esseen bound for the convergence of $T$ to the standard normal after suitable renormalization as $m, n \longrightarrow \infty$, with no restrictions on the ratio $m/n$.
The purpose of this paper is to complement the theoretical study of $T$ by investigating its power under alternatives of the form
\[
H_1: \cormat \in \Theta(b),
\]
where for any constant $b > 0$ and matrix Frobenius norm $\|\cdot\|_F$, we define the set of Pearson correlation matrices
\begin{equation} \label{alt}
\Theta(b) := \{{\bf R}: \|{\bf R} - {\bf I}_m \|_F \geq b \sqrt{m/n}, \ \ \diag(\cormat) = \iden \},
\end{equation}
which comprises a composite alternative hypothesis delineated by a signal size $\|{\bf R} - {\bf I}_m\|_F$ of  order no less than $\sqrt{m/n}$.

There are three major approaches to   testing independence with growing dimension $m$ in the literature, to the best of our knowledge. The first is the statistic $T$  considered in this paper. Being a ``sum" of squared pairwise sample correlation as in \eqref{Schottstat}, it is good at detecting diffuse dependency among many pairs of variables. Such dependency is most naturally described by the signal $\|{\bf R} - {\bf I}_m\|_F$. In fact, the main result in this paper will show that $T$ is minimax rate optimal for detecting such signal. The second approach considers the ``max" statistic,
\[
\max_{1 \leq p < q \leq m} \hat{\rho}_{pq}^2.
\]
Following many previous works \citep{jiang2004, Liu2008, li2010, zhou2007, li2010followup}, \citet{cai2011} showed that it
admits an asymptotic Gumbel distribution under $H_0$ in the ultra high dimensional regime when $m$ can be as large as $e^{n^c}$ for some constant $0 < c < 1$, as $m, n \longrightarrow \infty$. Naturally, it is good at detecting a structured alternative whose population correlation matrix $\cormat$ has sparse non-zero off-diagonal entries with considerable magnitudes. Both the ``sum" and ``max" approaches base their test on forming intuitive statistics that measure the overall dependency among the $m$ variables, with their respective non-parametric extensions; see \citet{leung2015testing} and \citet{han2014distribution}. The third is likelihood ratio test (LRT), which is well-known to give implementable test only if the dimension $m$ is smaller than $n$. Despite this limitation, \citet{JiangQi} showed the LRT statistic to be asymptotically normal when $m, n \longrightarrow \infty$, as long as $m + 4$ is less than $n$.

We remark that the derivation of \eqref{Schottstat} as the Rao's score statistic involves taking derivatives of the log-normal likelihood with respect to the mean vector and the precision matrix. The interested reader is referred to Appendix A in \citet{leung2015testing} for those calculations.

\section{Notations and main results}\label{sec:main}

For any positive integer $k$, $[k]$ is defined as the set $\{1, \dots, k\}$. $\mathcal{S}_k$ is the symmetric group of order $k$. Depending on the context, its elements will sometimes be treated as permutation functions on $k$ elements, or simply permutations of the set $[k]$. $C$ always denotes a positive constant that is universal, i.e, its value may change from place to place but does not depend on $m$ and $n$. ``$a \lesssim b$" means that $a \leq Cb$ for some constant $C >0$. $\mathbb{E}[\cdot]$, $\text{Var}[\cdot]$ and $P[\cdot]$ are expectation, variance and probability operators respectively.

 In this paper we shall \emph{always} assume that, for all $1 \leq p \leq m$,
$\text{Var}[X_p] = 1$ and  $\mathbb{E}[X_p] = 0$. Thus, for a duple $(p, q) \in [m] \times [m]$,  $\mathbb{E}[X_p X_q] =  \rho_{pq}$, and  its corresponding squared sample correlation  is defined as
\begin{equation} \label{r2}
\hat{\rho}_{pq}^2 :=  \frac{S_{pq}^2}{S_{pp}S_{qq}} = f\left( S_{pp}, S_{qq}, S_{pq}\right),
\end{equation}
where $f : \mathbb{R}_{> 0}^2 \times \mathbb{R} \longrightarrow \mathbb{R}$ is the function  \begin{equation} \label{fun}
f(u_1, u_2, u_3) := u_1^{-1} u_2^{-1} u_3^2,
\end{equation}
and
\begin{equation} \label{nice}
%{\bf S}= (S_{pq})_{1 \leq p, q \leq m} \text{ with }
S_{pq} :=  \frac{\sum_{i = 1}^nX_{pi}X_{qi}}{n}.
\end{equation}
We will also use
\[
\bar{S}_{pq} := S_{pq} - \rho_{pq}
\]
 to denote the centered sample covariance. Imposing the assumption $\text{Var}[X_p] = 1$ is always permitted, even if we use the more general form of Pearson correlations with all sample covariances $S_{pq}$  defined alternatively as
 \begin{equation} \label{nice2}
 \frac{\sum_{i = 1}^n (X_{pi} - n^{-1}\sum_{j = 1}^nX_{pj}  )(X_{qi}-  n^{-1}\sum_{j = 1}^nX_{qj})}{n -1}
 \end{equation}
  in \eqref{r2}, since the distribution of $\hat{\rho}_{pq}$ is invariant to the scaling of variables. Under normality, the restrictions $\mathbb{E}[X_p] = 0$ and \eqref{nice}   can be still be assumed without forgoing any generality of our results to follow; see the classical result in \citet[Theorem 3.3.2]{TWanderson}.

According to \citet[Theorem 2.2]{ChenShao} who refined the asymptotic result of \cite{Schott2005} under $H_0$, for a given $\alpha \in (0, 1)$, a test of asymptotic level $\alpha$ based on \eqref{Schottstat} is given as
\begin{equation} \label{ourtestform}
\psi  = I\left( T - \frac{m(m-1)}{2 n}  > \frac{m}{n}z_{\alpha} \right),
\end{equation}
where $I(\cdot)$ is the indicator function , $z_{\alpha} := \bar{\Phi}^{-1}( \alpha)$ ,  and $\Phi$ and  $\bar{\Phi}(x) := 1 - \Phi(x)$ are respectively the cumulative distribution function and tail probability of a  standard normal variate. Below, $\mathbb{E}_{\cormat}[\cdot]$ simply emphasizes that the expectation is taken with respect to a particular correlation matrix $\cormat \in \Theta(b)$.
%
%To state our main result on its minimax power, for any constant $b > 0$, we define the set of Pearson correlation matrices
%\begin{equation} \label{alt}
%\Theta(b) = \{{\bf R}: \|{\bf R} - {\bf I}_m \|_F \geq b \sqrt{m/n} \}
%\end{equation}
%which comprises a composite alternative hypothesis delineated by a signal $\|{\bf R} - {\bf I}_m\|_F$ of size larger than the order $O(\sqrt{m/n})$.

\begin{theorem}[Main result: asymptotic power]\label{thm:main} Suppose $m, n \longrightarrow \infty$ such that $\frac{m}{n} \leq \kappa$ for some constant $\kappa < \infty$.  For any significance level $\alpha \in (0, 1)$, the asymptotic  power of $\psi$ is given as
\[
\lim_{n \rightarrow \infty} \inf_{\Theta(b)} \mathbb{E}_{\cormat}[\psi] = \bar{\Phi}(z_{\alpha} - 2^{-1} b^2).
\]

\end{theorem}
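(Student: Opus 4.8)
The plan is to reduce the power computation to a central limit theorem for the (centered, scaled) statistic under an arbitrary sequence $\cormat \in \Theta(b)$, and then to show that the asymptotic mean shift is exactly $2^{-1}b^2$ (times the standard deviation $m/n$), while the limiting variance is unchanged from the null. Write $T - \frac{m(m-1)}{2n} = \frac{m}{n}\,W_n$, so that $\psi = I(W_n > z_\alpha)$; we want $W_n \Rightarrow N\!\big(2^{-1}b^2 + \delta_n, 1\big)$ uniformly, where $\delta_n$ absorbs any excess of $\|\cormat - \iden\|_F^2 \cdot \tfrac{n}{2m}$ over $b^2/2$. Since $\bar\Phi$ is monotone, the infimum over $\Theta(b)$ of $\mathbb{E}_\cormat[\psi]$ is attained in the limit by sequences on the boundary $\|\cormat - \iden\|_F = b\sqrt{m/n}$ (any larger signal only helps), so it suffices to analyze $W_n$ for such sequences and show $\lim \mathbb{E}_\cormat[\psi] = \bar\Phi(z_\alpha - 2^{-1}b^2)$, with the convergence uniform enough that the $\inf$ passes through the limit.

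The core decomposition is the standard one: expand $\hat\rho_{pq}^2 = f(S_{pp},S_{qq},S_{pq})$ around $(1,1,\rho_{pq})$ using \eqref{fun}, so that $\hat\rho_{pq}^2 \approx \rho_{pq}^2 + 2\rho_{pq}\bar S_{pq} + (\text{quadratic fluctuation terms in } \bar S_{pp}, \bar S_{qq}, \bar S_{pq})$ plus higher-order remainder. Summing over $p<q$: the leading deterministic term contributes $\sum_{p<q}\rho_{pq}^2 = \tfrac12\|\cormat-\iden\|_F^2 = \tfrac{b^2}{2}\cdot\tfrac{m}{n}$, which after dividing by $m/n$ gives precisely the mean shift $b^2/2$. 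The linear-in-noise term $\sum_{p<q}2\rho_{pq}\bar S_{pq}$ has variance of order $\tfrac1n\|\cormat-\iden\|_F^2 \cdot (\text{bounded factor}) = O(m/n^2)$, hence is $o(m/n)$ and vanishes after scaling — this is where the assumption $\|\cormat-\iden\|_F = O(\sqrt{m/n})$ is used critically. The quadratic fluctuation terms reproduce, up to negligible perturbations, the same U-statistic-type object that drives the null CLT of \citet[Theorem 2.2]{ChenShao}, so its limiting variance is $1$ (in the $m/n$ scale) regardless of $\cormat$; one argues this by showing the variance contribution of the off-null part of the correlation structure is lower-order. I would carry out: (1) the Taylor expansion with explicit control of the remainder, (2) showing the linear term is $o_P(m/n)$ uniformly over $\Theta(b)$, (3) identifying the quadratic term with the null statistic plus a negligible correction, and (4) invoking (a mild extension of) the \citet{ChenShao} CLT — possibly via a martingale or exchangeable-pairs argument — to get asymptotic normality with mean $b^2/2$ and variance $1$.

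The main obstacle I anticipate is step (3)–(4): establishing the CLT for the quadratic part \emph{under the alternative}, uniformly over the composite class $\Theta(b)$, rather than just at $\cormat = \iden$. Under the null, the summands $\hat\rho_{pq}^2$ enjoy a clean exchangeability/near-independence structure that the \citet{ChenShao} proof exploits; under a general $\cormat$ with $\|\cormat-\iden\|_F \asymp \sqrt{m/n}$ the pairs $(p,q)$ are weakly correlated through the nonzero $\rho_{pq}$'s, and one must verify that the resulting perturbation to the covariance structure of $\{\bar S_{pq}\}$ and to the fourth-moment/Lindeberg-type conditions is asymptotically negligible. Concretely, the cross-covariances between the quadratic terms pick up contributions like $\rho_{pq}\rho_{p'q'}\rho_{\cdots}$ summed over configurations of indices; bounding these sums by $\|\cormat-\iden\|_F^2$ (and its powers) times combinatorial factors, and checking they are $o(m/n)$ in the relevant scale, is the delicate bookkeeping. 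A secondary technical point is making the convergence uniform over $\Theta(b)$ so that $\lim\inf_{\Theta(b)} = \bar\Phi(z_\alpha - 2^{-1}b^2)$ — this should follow from the fact that all the error bounds above depend on $\cormat$ only through $\|\cormat-\iden\|_F$ and $\|\cormat-\iden\|_F$ is constrained, but it must be stated carefully.
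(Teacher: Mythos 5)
Your outline matches the paper's strategy in most respects: a Taylor expansion of $\hat\rho_{pq}^2$ around $(1,1,\rho_{pq})$, the deterministic shift $\sum_{p<q}\rho_{pq}^2=2^{-1}\|\cormat-\iden\|_F^2$, the observation that the linear term $\sum_{p<q}2\rho_{pq}\bar S_{pq}$ has variance $O(\|\cormat-\iden\|_F^2/n)$ and is negligible at the $m/n$ scale, a CLT for the quadratic part of $\bar S_{pq}^2$ driven by a martingale argument (the paper isolates the off-diagonal part $I$ and applies a Berry--Esseen bound for martingales, \lemref{BerryEsseen}), and explicit control of the Lagrange remainder (\lemref{ProbBddIII}, which is in fact where $m/n\le\kappa$ is really needed). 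The ``delicate bookkeeping'' you anticipate in bounding cross-covariances by powers of $\|\cormat-\iden\|_F$ is exactly what occupies \lemsref{varbdd}, \lemssref{II_1bdd}, \lemssref{II_2bdd} and the appendices.

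There is, however, one genuine gap: your reduction of $\inf_{\Theta(b)}$ to boundary sequences with $\|\cormat-\iden\|_F=b\sqrt{m/n}$ on the grounds that ``any larger signal only helps.'' No such stochastic monotonicity is available: enlarging $\|\cormat-\iden\|_F$ increases the mean shift but also inflates the fluctuations of the statistic (e.g.\ $\text{Var}[I]$ contains terms of order $m\|\cormat-\iden\|_F^2/n^2$ and $\|\cormat-\iden\|_F^4/n^2$, which dominate $m^2/n^2$ once $\|\cormat-\iden\|_F^2\gg m$), and the Berry--Esseen error in \lemref{BerryEsseen}$(ii)$ likewise grows with powers of $\|\cormat-\iden\|_F$. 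So your companion claim that the quadratic term has ``limiting variance $1$ in the $m/n$ scale regardless of $\cormat$'' is false over all of $\Theta(b)$; it holds only when $\|\cormat-\iden\|_F\lesssim\sqrt{m/n}$, and the normal approximation you invoke simply does not apply to sequences with diverging $\tau=\|\cormat-\iden\|_F\sqrt{n/m}$. The paper closes this hole by splitting $\Theta(b)$ into a stripe $\Theta(b,B)$, on which the uniform CLT argument you describe is carried out, and a far region $\Theta(B)$, on which the power is bounded below not by a CLT but by a crude Chebyshev bound using the second-moment estimates for $I$ and $II-\frac{m(m-1)}{2n}$ together with the tail bound on $III$, choosing $B=B(\alpha,b,\kappa)$ large. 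Your plan needs an analogous separate argument for large signals; as written, the step asserting the infimum is attained on the boundary would fail.
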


This theorem resembles \citet[Theorem $4$]{CaiMa}, in which the different problem of testing $H_0: { \boldsymbol\Sigma} = \iden$, where ${ \boldsymbol\Sigma}$ is the \emph{covariance} matrix of $(X_1, \dots, X_m)'$, is studied. Despite this, Theorem $1$ and Remark $1$ in their paper indicate that a matching lower bound on the detectable signal size as measured by $\|\cormat - \iden\|_F$ can be established for our problem \eqref{nullIndep}, which we restate next  for our readers' convenience. We add that \thmref{main} is slightly weaker than the parallel result of \citet{CaiMa} in that an upper bound on the ratio $m/n$ is imposed, which we believe to be merely a proof artifact not necessary for the theorem to hold. Discussion on this will be deferred later.

\begin{theorem} [Matching lower bound, \citet{CaiMa}] \label{thm:matchingbdd}
Let $0 < \alpha < \beta < 1$. Suppose $m, n \longrightarrow \infty$ such that $\frac{m}{n} \leq \kappa$ for some constant $\kappa < \infty$. Then there exists a constant $b = b(\kappa, \beta - \alpha) < 1$, such that
\[
\limsup_{n \longrightarrow \infty} \inf_{\Theta(b)} \mathbb{E}_{\cormat}[\phi] <\beta
\]
for any test $\phi$ with significance level $\alpha$ for testing $H_0$.
\end{theorem}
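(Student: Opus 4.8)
The plan is to prove this minimax detection lower bound by Le Cam's mixture method: place a prior $\pi$ supported entirely on $\Theta(b)$ and show that the associated Bayes mixture of sampling laws is, in $\chi^2$-divergence, within $o(1)$ of a quantity that tends to $0$ as $b\downarrow 0$. Under the standing assumptions the data are $n$ i.i.d.\ copies of a mean-zero Gaussian vector with correlation matrix $\cormat$; write $P_0$ for their law when $\cormat=\iden$, $Q_{\cormat}$ for their law under a correlation matrix $\cormat$, and $\bar Q_\pi=\int Q_{\cormat}\,d\pi(\cormat)$. Since $\pi$ lives on $\Theta(b)$ and any test $\phi$ of significance level $\alpha$ satisfies $\mathbb{E}_{P_0}[\phi]\le\alpha$, for every such $\phi$
\[
\inf_{\Theta(b)}\mathbb{E}_{\cormat}[\phi]\;\le\;\mathbb{E}_{\bar Q_\pi}[\phi]\;\le\;\mathbb{E}_{P_0}[\phi]+\|\bar Q_\pi-P_0\|_{\mathrm{TV}}\;\le\;\alpha+\tfrac12\sqrt{\chi^2(\bar Q_\pi\,\|\,P_0)},
\]
so it is enough to exhibit $\pi$ making $\limsup_n\chi^2(\bar Q_\pi\,\|\,P_0)$ as small as we wish, and then to choose $b=b(\kappa,\beta-\alpha)$ accordingly.

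For the prior I would take a \emph{dense random-sign perturbation confined to the off-diagonal}, which automatically lands in $\Theta(b)$ at the Frobenius level $\sqrt{m/n}$. Let $\{\nu_{pq}:1\le p<q\le m\}$ be i.i.d.\ Rademacher variables, set $M_\nu=\sum_{1\le p<q\le m}\nu_{pq}(e_pe_q'+e_qe_p')$ and $\epsilon^2=b^2/\big(n(m-1)\big)$, and let $\pi$ be the law of $\cormat_\nu:=\iden+\epsilon M_\nu$ \emph{conditioned on} the event $\mathcal{G}=\{\|M_\nu\|_{\mathrm{op}}\le 3\sqrt m\}$, on which $\|\epsilon M_\nu\|_{\mathrm{op}}\le 3\epsilon\sqrt m=O(n^{-1/2})$ and hence $\cormat_\nu$ is genuinely positive definite. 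A Wigner-type spectral-norm bound gives $P(\mathcal{G})=1-o(1)$, so the conditioning perturbs the total-variation estimate by only $o(1)$; and on $\mathcal{G}$ one has $\diag(\cormat_\nu)=\iden$ and $\|\cormat_\nu-\iden\|_F^2=2\epsilon^2\binom m2=b^2m/n$, so $\cormat_\nu\in\Theta(b)$.

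The $\chi^2$ is then computed \emph{exactly}: a short determinant manipulation based on $\Sigma_1^{-1}+\Sigma_2^{-1}-\iden=\Sigma_1^{-1}(\iden-\Delta_1\Delta_2)\Sigma_2^{-1}$ (with $\Delta_j:=\Sigma_j-\iden$) shows that, for correlation matrices $\Sigma_1,\Sigma_2$ near $\iden$,
\[
\mathbb{E}_{P_0}\!\left[\frac{dQ_{\Sigma_1}}{dP_0}\,\frac{dQ_{\Sigma_2}}{dP_0}\right]=\det\big(\iden-\Delta_1\Delta_2\big)^{-n/2},
\]
so $1+\chi^2(\bar Q_\pi\,\|\,P_0)=\mathbb{E}\big[\det(\iden-\epsilon^2 M_\nu M_{\nu'})^{-n/2}\big]$ for independent sign vectors $\nu,\nu'$ drawn as in the construction of $\pi$. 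Expanding $-\tfrac n2\log\det(\iden-\epsilon^2 M_\nu M_{\nu'})=\tfrac n2\sum_{k\ge1}\tfrac{\epsilon^{2k}}{k}\operatorname{tr}\!\big((M_\nu M_{\nu'})^k\big)$ — convergent on $\mathcal{G}$ since $\epsilon^2\|M_\nu\|_{\mathrm{op}}\|M_{\nu'}\|_{\mathrm{op}}\to0$ — the $k=1$ term equals $n\epsilon^2\sum_{p<q}\nu_{pq}\nu'_{pq}$, whose exponential average over the signs is $\big(\cosh(n\epsilon^2)\big)^{\binom m2}$; since $n\epsilon^2=b^2/(m-1)$ this converges to $e^{b^4/4}$. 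The $k\ge2$ terms are polynomial chaoses in the signs whose mean and $L^2$-magnitude in the exponent vanish — for instance $\mathbb{E}\big[\tfrac{n\epsilon^4}{4}\operatorname{tr}((M_\nu M_{\nu'})^2)\big]=\tfrac{n\epsilon^4}{4}\,m(m-1)\asymp b^4/n$, with second moment of order $b^8/n^2$ — and, together with a deterministic bound valid on $\mathcal{G}$, they contribute only a $1+o(1)$ factor after averaging. Hence $\limsup_n\chi^2(\bar Q_\pi\,\|\,P_0)\le e^{b^4/4}-1$, and choosing $b=b(\kappa,\beta-\alpha)\in(0,1)$ small enough that $\tfrac12\sqrt{e^{b^4/4}-1}<\beta-\alpha$ yields $\limsup_n\inf_{\Theta(b)}\mathbb{E}_{\cormat}[\phi]<\beta$ for every level-$\alpha$ test $\phi$.

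I expect the main obstacle to be the honest, \emph{uniform} control of the remainder $R=\tfrac n2\sum_{k\ge2}\tfrac{\epsilon^{2k}}{k}\operatorname{tr}((M_\nu M_{\nu'})^k)$: one must bound $R$ deterministically on $\mathcal{G}$ (where the dominant $k=2$ piece is $O(b^4 m/n)$, hence $O(1)$ under $m/n\le\kappa$), compute the conditional means and variances of the traces $\operatorname{tr}((M_\nu M_{\nu'})^k)$ precisely enough to see that $R\to0$ in probability, and combine these through a truncation/Cauchy--Schwarz argument so that $\mathbb{E}\big[e^{R}\,e^{\,n\epsilon^2\sum_{p<q}\nu_{pq}\nu'_{pq}}\big]=(1+o(1))\,e^{b^4/4}$, i.e.\ the benign $\cosh(n\epsilon^2)^{\binom m2}$ factor is not overwhelmed. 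This is exactly the step where, following \citet{CaiMa}, the side condition $m/n\le\kappa$ is convenient; as with \thmref{main}, we expect it to be removable by a more delicate spectral argument.
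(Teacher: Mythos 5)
The paper does not actually prove this statement: Theorem~\ref{thm:matchingbdd} is imported verbatim from \citet{CaiMa} (their Theorem 1 and Remark 1), so there is no in-paper proof to compare against. Your proposal reconstructs essentially the argument used in that reference: a Le Cam/$\chi^2$ mixture bound with a random perturbation of $\iden$ placed on the off-diagonal at Frobenius scale $b\sqrt{m/n}$, combined with the exact Gaussian identity $\mathbb{E}_{P_0}[L_{\Sigma_1}L_{\Sigma_2}]=\det(\iden-\Delta_1\Delta_2)^{-n/2}$, whose derivation you state correctly. Your specific prior (i.i.d.\ Rademacher signs on all $\binom{m}{2}$ off-diagonal entries, conditioned on an operator-norm event to guarantee positive definiteness) differs in detail from Cai--Ma's construction but is equally valid, and your first-order computation $\cosh(n\epsilon^2)^{\binom{m}{2}}\to e^{b^4/4}$ with $n\epsilon^2=b^2/(m-1)$ is correct. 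Two remarks. First, the step you flag as the main obstacle is easier than you suggest: since the theorem only requires $\chi^2(\bar Q_\pi\,\|\,P_0)$ to be small for small $b$ (not the sharp constant $e^{b^4/4}-1$), the deterministic bound $|R|\le\frac{nm}{2}\sum_{k\ge2}(9\epsilon^2 m)^k/k\lesssim \kappa b^4$ valid on $\mathcal{G}\cap\mathcal{G}'$ can simply be pulled out as a factor $e^{C\kappa b^4}$ in front of the $\cosh$ product, giving $1+\chi^2\le(1+o(1))e^{C\kappa b^4+b^4/4}$; no in-probability control of the traces is needed, and this is precisely where the dependence $b=b(\kappa,\beta-\alpha)$ enters. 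Second, you should be slightly more careful with the conditioning on $\mathcal{G}$: the likelihood ratios are undefined off $\mathcal{G}$, so the clean route is to define the prior as the conditional law from the outset and bound $\mathbb{E}[\,\cdot\,1_{\mathcal{G}}1_{\mathcal{G}'}]/P(\mathcal{G})^2$ by the unrestricted sign average (the integrand being nonnegative), which costs only the $(1+o(1))$ factor you anticipate. With these points addressed the argument is complete and matches the cited proof in substance.
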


The lower bound result says that no $\alpha$-level test for $H_0$ can achieve a preset target power if the signal size $\|\cormat - \iden\|_F$ falls below a certain threshold modulo the separation rate $\sqrt{m/n}$ . Our main result in \thmref{main} hence suggests that our test $\psi$ is ``rate" optimal when the ratio $m/n$ is bounded, since the asymptotic  power $\lim_{n \longrightarrow \infty} \inf_{\Theta(b)} \mathbb{E}_{\cormat}[\psi]$ tends to one as $b \longrightarrow \infty$.

Although the result in \thmref{main} is neat, its proof, which occupies the rest of this paper, is quite involved. As it will become clear later, this is because our  statistic $T$ is constructed with Pearson correlations whose higher order moment properties involve a lot of computations to be understood; see \citet[Section 7]{hotelling} for classical work on this. At some point in this paper we will use \texttt{mathematica} to help us with certain symbolic calculations.
We shall begin with a Taylor expansion of the expression  for $\hat{\rho}^2_{pq}$ in terms of the function $f$ in \eqref{r2}. We need the multi-index notations: For a vector ${\boldsymbol \lambda} = (\lambda_1, \dots, \lambda_k)$ of $k$ non-negative integers, ${\boldsymbol \lambda}! = \lambda_1! \dots \lambda_k!$ and  $|{\boldsymbol \lambda}| = \lambda_1 + \dots + \lambda_k$, and if $g= g(u_1, \dots, u_k)$ is a function in $k$ arguments, $\partial^{\boldsymbol \lambda}g (\tilde{u}_1, \dots, \tilde{u}_k)= \frac{\partial^{|{\boldsymbol \lambda}|} g}{\partial u_1^{\lambda_1}\dots\partial u_k^{\lambda_k}}\big|_{u_i = \tilde{u}_i}$ is its partial derivative with respect to  $\boldsymbol \lambda$ evaluated at the point $ (\tilde{u}_1, \dots \tilde{u}_k) $. Since $\rho_{pq}^2 = f(1, 1, \rho_{pq}) = f(\rho_{pp}, \rho_{qq}, \rho_{pq})$, by Taylor's theorem, for each pair $1 \leq p \not = q \leq m$,
\begin{equation}\label{Taylor}
\hat{\rho}_{pq}^2 - \rho_{pq}^2
 =  \sum_{\substack{{\boldsymbol \lambda} \in \mathbb{N}^3_{\geq 0}:  \\1 \leq |{\boldsymbol \lambda}| \leq 4  }} \frac{\partial^{\boldsymbol \lambda} f(1, 1, \rho_{pq})}{{\boldsymbol \lambda}!}\bar{S}_{pp}^{\lambda_1} \bar{S}_{qq}^{\lambda_2}\bar{S}_{pq}^{\lambda_3}+   III_{pq} \ \ \text{  a.s.},
\end{equation}
where
\begin{equation} \label{remainder}
III_{pq} := \sum_{\substack{{\boldsymbol \lambda} \in \mathbb{N}^3_{\geq 0}:  \\ |{\boldsymbol \lambda}| = 5  }} \frac{{(\rho_{pq} + k_{pq}\bar{S}_{pq})}^{2 - \lambda_1 }\bar{S}_{pp}^{\lambda_1} \bar{S}_{qq}^{\lambda_2}\bar{S}_{pq}^{\lambda_3}}{{(1 + k_{pq}\bar{S}_{pp})}^{1 + \lambda_2 } {( 1 +k_{pq}\bar{S}_{qq} )}^{1 + \lambda_3 }},
\end{equation}
for  some $k_{pq} = k_{pq}(S_{pp}, S_{qq}, S_{pq}) \in (0, 1)$,  is the remainder in Lagrange's form. The ``almost surely" qualifier  is in  \eqref{Taylor}  because  on an event of measure zero, either $S_{pp}$ or $S_{qq}$ may be zero, in which case the Taylor's theorem doesn't apply since $f$ is defined on $\mathbb{R}_{> 0}^2 \times \mathbb{R}$. Our proof depends crucially on recognizing that, when ${\boldsymbol \lambda} = (\lambda_1, \lambda_2, \lambda_3) = (0, 0, 2)$,
\begin{align*}
& \frac{\partial^{\boldsymbol \lambda }f(1, 1, \rho_{pq})}{{\boldsymbol \lambda}!}
\bar{S}_{pp}^{\lambda_1} \bar{S}_{qq}^{\lambda_2}\bar{S}_{pq}^{\lambda_3} = \bar{S}_{pq}^2 \\
%&= \left(\frac{\sum_{i = 1}^n (X_{pi}X_{qi} - \rho_{pq})}{n}\right)^2 \\
&= \frac{\sum_{i = 1}^n (X_{pi}X_{qi} - \rho_{pq})^2}{n^2} + \frac{2 \sum_{ 1 \leq i < j \leq n} (X_{pi}X_{qi} - \rho_{pq})(X_{pj}X_{qj} - \rho_{pq})}{n^2} ,
\end{align*}
in light of \lemref{df} which specifies the partial derivatives of $f$.
One can then equivalently write \eqref{Taylor} as
\begin{equation}  \label{2ndTaylor}
\hat{\rho}_{pq}^2- \rho_{pq}^2 = I_{pq} + II_{ pq} + III_{pq},
\end{equation}
where
\begin{equation}
I_{pq} := \frac{2  \sum_{ 1 \leq i < j \leq n} (X_{pi}X_{qi} - \rho_{pq})(X_{pj}X_{qj} - \rho_{pq})}{n^2}, \text{ and} \label{I_pq}
\end{equation}
\begin{multline}
II_{pq} := \frac{\sum_{i = 1}^n (X_{pi}X_{qi} - \rho_{pq})^2}{n^2} + \sum_{\substack{{\boldsymbol \lambda} \in \mathbb{N}^3_{\geq 0}:  \\1 \leq |{\boldsymbol \lambda}| \leq 4  \\ {\boldsymbol \lambda} \not = (0, 0 , 2)}} \frac{\partial^{\boldsymbol \lambda} f(1, 1, \rho_{pq})}{{\boldsymbol \lambda}!}\bar{S}_{pp}^{\lambda_1} \bar{S}_{qq}^{\lambda_2}\bar{S}_{pq}^{\lambda_3}. \label{II_pq}
\end{multline}
Defining $I  := \sum_{ 1\leq p < q \leq m} I_{pq}$, $II := \sum_{ 1\leq p < q \leq m} II_{pq}$ and $III :=  \sum_{ 1\leq p < q \leq m} III_{pq}$ by summing over all $1\leq p < q \leq m$, from \eqref{2ndTaylor} one can write
\begin{equation} \label{TandOthers}
T  -  \frac{m(m-1)}{2n} - 2^{-1}\|{\bf R} - \iden\|_F^2= I + \left( II  -  \frac{m(m-1)}{2n}\right) + III,
\end{equation}
realizing that $2^{-1}\|{\bf R} -{\bf I}_m\|_F^2 = \sum_{1 \leq p< q \leq m} \rho_{pq}^2$. We are now in the position to  introduce three supporting lemmas that are the building blocks of \thmref{main}. The first lemma gives a Berry-Esseen bound for the cumulative distribution function of the term $I$ with $\Phi(\cdot)$ after standardization. This will ultimately drive the form of our power function in \thmref{main}. The next two lemmas control the variability of the extra terms,  $( II  -  \frac{m(m-1)}{2n})$ and $III$. From now on for the rest of this paper all the big $O$, little $o$ notations are with respect to our considered asymptotic regime $m,n \longrightarrow \infty$, $m/n \leq \kappa$.

\begin{lemma} [Berry Esseen theorem for $I$] \label{lem:BerryEsseen}
The following are true for $I$:
\begin{enumerate}
\item Variance:
\begin{equation*}
 \text{Var}[I]  = \mathbb{E}[I^2] =
% \frac{m(m-1)n(n-1)}{n^4} + \frac{ O(   m  \|\cormat - \iden\|_F^2 + \|\cormat - \iden\|_F^4)}{n^2} \\
 \frac{m^2}{n^2} + o\left(\frac{m^{2(1 - \gamma)}}{n^2}\right) \sum_{k = 0}^2 \|\cormat - \iden\|_F^{2k}
\end{equation*}
for any $0 < \gamma < 1/2$.
\item Berry-Esseen bound:
\begin{multline*} \label{BEbddI}
\sup_{t \in \mathbb{R}} \left|P\left(   \frac{I}{\sqrt{\text{Var(I)}}}
 \leq t\right) - \Phi(t)\right| \lesssim \left\{ \frac{o(m^4/n^4) \sum_{k = 0}^8 \|\cormat - \iden\|_F^k}{ \text{Var}(I)^2}\right\}^{1/5}. \\
% \leq C \left\{ \frac{m^4 n^{-5}(  \sum_{k = 0}^4 \|\cormat - \iden\|^k_F) + m^3n^{-4}+ n^{-4}(\sum_{k = 0}^3 m^k\|\cormat - \iden\|_F^{2(4-k)} + m^3 \|\cormat - \iden\|_F)}{\text{Var}(I)^{2}}\right\}^{1/5}\\
\end{multline*}
\end{enumerate}
\end{lemma}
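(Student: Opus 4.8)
The plan is to treat $I$ as a completely degenerate $U$-statistic of order two and then invoke a martingale Berry--Esseen theorem. For $i = 1, \dots, n$ put $\mathbf{W}_i := \big(X_{pi}X_{qi} - \rho_{pq}\big)_{1 \le p < q \le m} \in \mathbb{R}^{\binom{m}{2}}$; these are i.i.d.\ mean-zero random vectors, and by the Gaussian product-moment (Isserlis/Wick) identity their common covariance matrix $\Sigma$ has entries $\Sigma_{(p,q),(r,s)} = \mathbb{E}\big[(X_pX_q - \rho_{pq})(X_rX_s - \rho_{rs})\big] = \rho_{pr}\rho_{qs} + \rho_{ps}\rho_{qr}$. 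From \eqref{I_pq} one has $I = \frac{2}{n^2}\sum_{1 \le i < j \le n}\langle \mathbf{W}_i, \mathbf{W}_j\rangle$, and since $\mathbb{E}[\langle \mathbf{W}_i, \mathbf{W}_j\rangle \mid \mathbf{W}_i] = \langle \mathbf{W}_i, \mathbb{E}\mathbf{W}_j\rangle = 0$ the kernel is completely degenerate; in particular $\mathbb{E}[I] = 0$, which explains the first equality in part~(i).

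Part~(i) is then a second-moment computation. Orthogonality of the distinct summands gives $\text{Var}[I] = \frac{4}{n^4}\binom{n}{2}\,\mathbb{E}\big[\langle \mathbf{W}_1, \mathbf{W}_2\rangle^2\big] = \frac{2(n-1)}{n^3}\,\|\Sigma\|_F^2$, using $\mathbb{E}\big[\langle \mathbf{W}_1, \mathbf{W}_2\rangle^2\big] = \mathrm{tr}(\Sigma^2) = \|\Sigma\|_F^2$. One then expands $\|\Sigma\|_F^2 = \sum_{p<q}\sum_{r<s}(\rho_{pr}\rho_{qs} + \rho_{ps}\rho_{qr})^2$ by substituting $\cormat = \iden + \boldsymbol{\Delta}$ with $\boldsymbol{\Delta}$ of zero diagonal: the $\delta_{pr}\delta_{qs}$ part gives the leading $\binom{m}{2}$; every term with an odd number of factors of $\boldsymbol{\Delta}$ vanishes because $\Delta_{pp} = 0$; and the remaining terms, being sums of entrywise products of two to four copies of $\boldsymbol{\Delta}$, are dominated via Cauchy--Schwarz and $\|\boldsymbol{\Delta}\|_{\mathrm{op}} \le \|\boldsymbol{\Delta}\|_F$ by $O(m)\|\cormat - \iden\|_F^2 + O(1)\big(\|\cormat - \iden\|_F^3 + \|\cormat - \iden\|_F^4\big)$ --- the key point being that the coefficient of $\|\cormat - \iden\|_F^2$ is $O(m)$, not $O(m^2)$. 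Since $\frac{2(n-1)}{n^3}\binom{m}{2} = \frac{m^2}{n^2} - O\big(\tfrac{m}{n^2} + \tfrac{m^2}{n^3}\big)$, the claim follows once one checks, using $m/n \le \kappa$ and $\gamma < 1/2$, that $\tfrac{m}{n^2},\ \tfrac{m^2}{n^3},\ \tfrac{m}{n^2}\|\cormat - \iden\|_F^2,\ \tfrac1{n^3}\|\cormat - \iden\|_F^3,\ \tfrac1{n^3}\|\cormat - \iden\|_F^4$ are all $o\big(\tfrac{m^{2(1-\gamma)}}{n^2}\big)\sum_{k=0}^{2}\|\cormat - \iden\|_F^{2k}$ (here one uses $x^3 \le x^2 + x^4$).

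For part~(ii), decompose $I = \sum_{k=2}^{n} D_k$ with $D_k := \frac{2}{n^2}\langle \mathbf{S}_{k-1}, \mathbf{W}_k\rangle$ and $\mathbf{S}_{k-1} := \sum_{i=1}^{k-1}\mathbf{W}_i$; relative to $\mathcal{F}_k := \sigma(\mathbf{W}_1, \dots, \mathbf{W}_k)$ this is a martingale difference sequence, as $\mathbb{E}[D_k \mid \mathcal{F}_{k-1}] = \frac{2}{n^2}\langle \mathbf{S}_{k-1}, \mathbb{E}\mathbf{W}_k\rangle = 0$. Applying the martingale Berry--Esseen theorem with moment exponent $p = 2$ (this is where the exponent $1/5$ originates; the third moment is absorbed) bounds $\sup_t |P(I/\sqrt{\text{Var}(I)} \le t) - \Phi(t)|$ by a universal multiple of $L_n^{1/5}$, where $L_n = \text{Var}(I)^{-2}\big(\sum_{k=2}^{n}\mathbb{E}[D_k^4] + \text{Var}(V_n)\big)$ and $V_n := \sum_{k=2}^{n}\mathbb{E}[D_k^2 \mid \mathcal{F}_{k-1}] = \frac{4}{n^4}\sum_{k=2}^{n}\mathbf{S}_{k-1}^{\top}\Sigma\,\mathbf{S}_{k-1}$, which has $\mathbb{E}[V_n] = \text{Var}(I)$. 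So it suffices to show $\sum_{k=2}^{n}\mathbb{E}[D_k^4] + \text{Var}(V_n) = o(m^4/n^4)\sum_{k=0}^{8}\|\cormat - \iden\|_F^{k}$. For the fourth-moment sum, condition on $\mathbf{S}_{k-1}$ and use boundedness of the fourth moments and cumulants of the coordinates of $\mathbf{W}_k$ to obtain $\mathbb{E}\big[\langle \mathbf{S}_{k-1}, \mathbf{W}_k\rangle^4 \mid \mathbf{S}_{k-1}\big] \lesssim \big(\mathbf{S}_{k-1}^{\top}\Sigma\,\mathbf{S}_{k-1}\big)^2 + \text{(lower-order forms)}$, then $\mathbb{E}\|\mathbf{S}_{k-1}\|^4 \lesssim n^2(\mathrm{tr}\,\Sigma)^2 + \cdots$ with $\mathrm{tr}\,\Sigma = \binom{m}{2} + \tfrac12\|\cormat - \iden\|_F^2$, giving a bound of order $\tfrac1n \cdot \tfrac{m^4}{n^4}$ times a polynomial in $\|\cormat - \iden\|_F$. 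For $\text{Var}(V_n)$, rewrite $\sum_{k=2}^{n}\mathbf{S}_{k-1}^{\top}\Sigma\,\mathbf{S}_{k-1} = \sum_{i=1}^{n-1}(n-i)\,\mathbf{W}_i^{\top}\Sigma\,\mathbf{W}_i + 2\sum_{1 \le i < j \le n-1}(n-j)\,\mathbf{W}_i^{\top}\Sigma\,\mathbf{W}_j =: A_n + 2B_n$; here $A_n$ is a sum of independent terms, $B_n$ is a sum of pairwise-orthogonal completely degenerate terms, and $\text{Cov}(A_n, B_n) = 0$ because a singly-occurring index always factors out a zero-mean term, so $\text{Var}(V_n) = \tfrac{16}{n^8}\big(\text{Var}(A_n) + 4\,\text{Var}(B_n)\big)$ with $\text{Var}(A_n) \le n^3\,\text{Var}\big(\mathbf{W}_1^{\top}\Sigma\,\mathbf{W}_1\big)$ and $\text{Var}(B_n) = \mathrm{tr}(\Sigma^4)\sum_{1 \le i < j \le n-1}(n-j)^2 \lesssim n^4\,\mathrm{tr}(\Sigma^4)$.

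The main obstacle I foresee is the final bookkeeping: expanding $\text{Var}\big(\mathbf{W}_1^{\top}\Sigma\,\mathbf{W}_1\big)$, $\mathrm{tr}(\Sigma^4)$, and the analogous quantities entering the fourth-moment estimate in powers of $\boldsymbol{\Delta} = \cormat - \iden$, and showing each is controlled by the admissible combination $o(m^4/n^4)\sum_{k=0}^{8}\|\cormat - \iden\|_F^{k}$. Because the coordinates of $\mathbf{W}_1$ are products of standard normals, this amounts to a long but routine enumeration of eighth-order Gaussian moments, organized by which underlying indices coincide and how the copies of $\cormat$ chain together; it is the natural place to enlist \texttt{mathematica}. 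Throughout, the hypotheses $m/n \le \kappa$ and $\gamma < 1/2$ are used only to convert crude power counts into the stated $o(\cdot)$ bounds.
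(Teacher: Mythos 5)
Your proposal follows essentially the same route as the paper: the same martingale-difference decomposition of $I$, the same Brown--Heyde martingale Berry--Esseen theorem (whence the exponent $1/5$), and the same two quantities to control, namely $\sum_k \mathbb{E}[D_k^4]$ and the variance of the conditional variance. Where you differ is in the bookkeeping, and your version is genuinely cleaner: writing $I = \tfrac{2}{n^2}\sum_{i<j}\langle \mathbf{W}_i,\mathbf{W}_j\rangle$ and expressing everything through $\mathrm{tr}(\Sigma^2)$ and $\mathrm{tr}(\Sigma^4)$ replaces the paper's sums $\mathbb{S}(k)$, $\mathbb{T}(k)$ and $\mathbb{P}_u$ by trace identities, and your orthogonal decomposition $\sum_k \mathbf{S}_{k-1}^{\top}\Sigma\mathbf{S}_{k-1} = A_n + 2B_n$ makes the cancellation of the leading $m^4/n^4$ term against $\mathrm{Var}(I)^2$ automatic, whereas the paper has to extract it by hand from the estimate $\sum \mathbb{P}_1^2 = \tfrac{m^4}{4} + O(m^3)\sum_k\|\cormat-\iden\|_F^k$. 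Two caveats. First, a small slip in part (i): it is not true that every odd-degree term in $\boldsymbol{\Delta}$ vanishes in $\|\Sigma\|_F^2$; the cross term $2\rho_{pr}\rho_{qs}\rho_{ps}\rho_{qr}$ produces triple products $\Delta_{pq}\Delta_{qr}\Delta_{pr}$ over distinct triples, which survive and must be bounded (the paper's Lemma~\ref{lem:threebounds} gives $\lesssim \|\cormat-\iden\|_F^2 + \|\cormat-\iden\|_F^4$); your stated bound happens to budget for a cubic term anyway, so this is harmless. Second, the step you flag as ``routine enumeration'' --- bounding the fourth-cumulant quartic form in $\mathbb{E}[\langle\mathbf{S}_{k-1},\mathbf{W}_k\rangle^4\mid\mathbf{S}_{k-1}]$, $\mathrm{Var}(\mathbf{W}_1^{\top}\Sigma\mathbf{W}_1)$, and $\mathrm{tr}(\Sigma^4)$ by $O(m^3)$ or $O(m^4)$ times polynomials in $\|\cormat-\iden\|_F$ --- is where the bulk of the paper's effort lives (its Appendices~\ref{app:appen} and \ref{app:bddforBs}); boundedness of individual cumulants is not enough here, since the naive bound on the quartic form is off by powers of $m$, and one genuinely needs the chaining structure of the Gaussian moments captured by Lemma~\ref{lem:bddOnSums}. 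You correctly identify this as the obstacle and sketch the right organizing principle, so I count it as deferred detail rather than a wrong turn.
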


\begin{lemma}[Bound on the 2nd moment of $II  -  \frac{m(m-1)}{2n}$ ]\label{lem:varbdd}
\begin{multline}
\mathbb{E}\left[\left(II - \frac{m(m-1)}{2n}\right)^2\right] \lesssim\\
% \frac{\|R - I\|^4_F }{n} + \frac{m^2}{n^3}\left(1 + \|R - I\|_F^2\right)  +
% \frac{m^4}{n^5} +  \left(\frac{m^3}{n^4} + \frac{1}{n^2}\right)\|R - I\|_F +
% \frac{\|R  - I\|_F^2 }{n} \\
 \frac{\|\cormat- \iden\|_F^2 + \|\cormat - \iden\|_F^4}{n} + o\left(\frac{m^{2(1- \gamma)}}{n^2}\right) \sum_{k = 0}^4 \|\cormat - \iden\|_F^k,
\end{multline}
for any fixed  $0 < \gamma < 1/2$.
\end{lemma}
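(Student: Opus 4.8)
The plan is to split $II-\frac{m(m-1)}{2n}$ into a bounded number of pieces, control the second moment of each, and recombine through $\mathbb{E}\big[(\sum_{j\le L}A_j)^2\big]\le L\sum_{j\le L}\mathbb{E}[A_j^2]$ with $L$ a fixed constant. What matters is \emph{which} monomials of \eqref{II_pq} to keep together. Since $f$ is quadratic in $u_3$ we have $\partial^{\boldsymbol \lambda}f\equiv 0$ for $\lambda_3\ge 3$, so every monomial in \eqref{II_pq} has $\lambda_3\in\{0,1,2\}$; separate those with $\lambda_3=2$ from those with $\lambda_3\le 1$. Let $D_{pq}$ be the sum of $\frac{\sum_{i=1}^n(X_{pi}X_{qi}-\rho_{pq})^2}{n^2}$ with all $\lambda_3=2$ monomials of \eqref{II_pq}, put $M:=\sum_{1\le p<q\le m}D_{pq}-\frac{m(m-1)}{2n}$, and let $R$ be the sum over $1\le p<q\le m$ of the $\lambda_3\le 1$ monomials. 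Then $II-\frac{m(m-1)}{2n}=M+R$, and it suffices to bound $\mathbb{E}[M^2]$ and $\mathbb{E}[R^2]$.

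\emph{The term $M$.} Keeping $D_{pq}$ intact is forced: taken alone, $\sum_{p<q}\frac{\sum_i(X_{pi}X_{qi}-\rho_{pq})^2}{n^2}$ has variance of order $(m/n)^3$, because the $\Theta(m^3)$ index-sharing pairs $(p,q),(p,q')$ each contribute a covariance of size $\Theta(n^{-3})$ — already overshooting the asserted bound under $H_0$. Collecting coefficients through \lemref{df} and using $\bar{S}_{pq}^2=\frac{\sum_i(X_{pi}X_{qi}-\rho_{pq})^2}{n^2}+I_{pq}$ (the identity displayed before \eqref{2ndTaylor}), one gets $D_{pq}=\frac{\sum_i(X_{pi}X_{qi}-\rho_{pq})^2}{n^2}+\bar{S}_{pq}^2\, r_{pq}(\bar{S}_{pp},\bar{S}_{qq})$ with $r_{pq}$ a bivariate polynomial of degree $\le 2$, no constant term, and coefficients polynomial in $\rho_{pq}$ bounded uniformly on $|\rho_{pq}|\le 1$. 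I would then replace $\bar{S}_{pq}^2$ by $\frac{1+\rho_{pq}^2}{n}$ plus a centred part, write $\bar{S}_{pp}=\frac1n\sum_i(X_{pi}^2-1)$ etc., and collapse the double index sum via $\sum_{p<q}(\bar{S}_{pp}+\bar{S}_{qq})=(m-1)\sum_p\bar{S}_{pp}$ and $\sum_{p<q}X_{pi}^2X_{qi}^2=\frac12\big((\sum_pX_{pi}^2)^2-\sum_pX_{pi}^4\big)$. The leading part of $M$ then becomes $\frac1{n^2}\sum_{i=1}^nZ_i$ with $Z_i$ a centred per-sample polynomial in $\sum_pX_{pi}^2$ and $\sum_pX_{pi}^4$; the linear-in-$(\bar{S}_{pp},\bar{S}_{qq})$ part of $r_{pq}$ is exactly what cancels the dominant linear-in-$\sum_pX_{pi}^2$ fluctuation of the quadratic diagonal contribution (``completing the square''), so that $\mathrm{Var}(Z_i)=O(m^2+m\|\cormat-\iden\|_F^2+\|\cormat-\iden\|_F^4)$ and hence $\mathrm{Var}\big(\frac1{n^2}\sum_iZ_i\big)$ lies within the stated budget once $\gamma<1/2$ and $m\le\kappa n$. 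The same cancellation at the level of expectations gives $\mathbb{E}[M]=\frac{\|\cormat-\iden\|_F^2}{2n}+O(m^2/n^3)+(\text{smaller})$, so $(\mathbb{E}[M])^2$ is also within budget; the lower-order leftovers of $M$ — the $I_{pq}$-part of $\bar{S}_{pq}^2$, the quadratic part of $r_{pq}$, and the centred part of $\bar{S}_{pq}^2$ against $r_{pq}$ — are bounded by the same estimates used for $R$.

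\emph{The term $R$.} Since $f$ carries the factor $u_3^2$, \lemref{df} shows the coefficient of a $\lambda_3\le 1$ monomial vanishes at $\rho_{pq}=0$ to order $2-\lambda_3$, hence is $\le C|\rho_{pq}|^{2-\lambda_3}$ on $|\rho_{pq}|\le 1$; in particular $R\equiv 0$ under $H_0$. Expanding $\mathbb{E}[R^2]$ into a double sum over $(p,q),(p',q')$ of coefficient products times $\mathbb{E}$ of a product of at most eight factors $\bar{S}_{\bullet\bullet}$, each an average over the $n$ independent samples, independence across samples makes the expectation $O(n^{-s})$ with $s$ growing in $|{\boldsymbol \lambda}|+|{\boldsymbol \lambda}'|$, and it vanishes unless $\{p,q\}$ and $\{p',q'\}$ overlap, so at most $O(m^2)$ or $O(m^3)$ tuples contribute. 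The bound $|\rho_{pq}|\le 1$ alone does not suffice for the $O(m^3)$ tuples; instead I would pull extra $\rho$-factors out of the conditioning identity $\mathbb{E}[\bar{S}_{pq}\mid X_{p1},\dots,X_{pn}]=\rho_{pq}\bar{S}_{pp}$ (and its iterates), which forces each ``unpaired'' $\bar{S}_{pq}$ inside such an expectation to carry a factor $\rho_{pq}$. Inserting these into $\sum_{p<q}|\rho_{pq}|^a\le\frac12\|\cormat-\iden\|_F^2$ for $a\ge 2$, $\sum_{p<q}|\rho_{pq}|\lesssim m\|\cormat-\iden\|_F$, and $\sum_{p,q,q'}|\rho_{pq}\rho_{pq'}\rho_{qq'}|\lesssim\|\cormat-\iden\|_F^3\le\|\cormat-\iden\|_F^2+\|\cormat-\iden\|_F^4$ gives, monomial by monomial, a bound of the form $\frac{\|\cormat-\iden\|_F^2+\|\cormat-\iden\|_F^4}{n}+o(m^{2(1-\gamma)}/n^2)\sum_{k=0}^4\|\cormat-\iden\|_F^k$.

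\emph{Main obstacle.} The heart of the argument is the analysis of $M$: identifying the grouping $D_{pq}$, executing the cancellation that drops its variance from $\Theta((m/n)^3)$ to $O(m^2/n^3)$ plus admissible $\|\cormat-\iden\|_F$-terms, and verifying that every lower-order remainder is within budget. This is the lengthy moment computation flagged in the introduction, and the place where symbolic computation is natural. A genuine secondary difficulty is making the $\rho$-factor extraction in $\mathbb{E}[R^2]$ tight enough that the $O(m^3)$ index-sharing tuples do not cost a spurious factor of order $m$. The remaining steps — the derivative formulas of \lemref{df}, the across-samples independence bookkeeping, and the Frobenius-norm inequalities — are routine.
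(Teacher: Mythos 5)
Your decomposition by the value of $\lambda_3$ is in substance the paper's own split $II = II_1 + II_2$ (see \eqref{II_pq1}--\eqref{II_pq2}), except that you keep the $(1,1,2)$ monomial with the diagonal group $M$, whereas the paper moves $\sum_{p<q}\bar{S}_{pp}\bar{S}_{qq}\bar{S}_{pq}^2$ into the ``small'' part $II_2$ and bounds its second moment directly (it costs $O(m^4/n^5+m^3\|\cormat-\iden\|_F/n^4+m^2\|\cormat-\iden\|_F^2/n^4)$, so either placement works). Where you genuinely diverge is the treatment of $M$: the paper writes $II_1$ as a degree-$4$ U-statistic, invokes Hoeffding's variance formula $\mathrm{Var}[II_1]\lesssim\sum_{c=1}^4 n^{-c}\zeta_c$, and the cancellation you describe appears there as the fact that the first projection $\tilde g_1({\bf X}_i)$ collapses, at $\cormat=\iden$, to $\frac{1}{4n}\sum_{p<q}(X_{pi}^2-1)(X_{qi}^2-1)$, whose second moment is $O(m^2)$ rather than $O(m^3)$. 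Your per-sample ``completing the square'' is the same cancellation done by hand; it is correct and arguably more transparent, but it leaves you to bound the leftover cross terms (the $I_{pq}$-part of $\bar{S}_{pq}^2$ against $r_{pq}$, the quadratic part of $r_{pq}$, and so on) by exactly the Isserlis-type moment computations that the paper's $\zeta_2,\zeta_3,\zeta_4$ estimates package up, so the total work is comparable. Your identification of the $\Theta((m/n)^3)$ obstruction and of the mechanism that removes it is the key insight of this lemma, and you have it.

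One step in your treatment of $R$ is wrong as stated: the cross-expectations in $\mathbb{E}[R^2]$ do \emph{not} vanish when $\{p,q\}\cap\{p',q'\}=\emptyset$, except under $H_0$. Under the alternative one has, for instance, $\mathbb{E}[\bar{S}_{pq}\bar{S}_{rs}]=n^{-1}(\rho_{pr}\rho_{qs}+\rho_{ps}\rho_{qr})$ for four distinct indices, and these disjoint tuples are precisely where the leading term $n^{-1}(\|\cormat-\iden\|_F^2+\|\cormat-\iden\|_F^4)$ of the lemma comes from: the $(0,0,1)$ monomial contributes $4n^{-1}\sum\rho_{pq}\rho_{rs}(\rho_{pr}\rho_{qs}+\rho_{ps}\rho_{qr})$, which is controlled by \eqref{5.1}--\eqref{5.2}. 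So you cannot restrict attention to the $O(m^2)$ or $O(m^3)$ overlapping tuples; you also need the four-distinct-index bound $\sum|\rho_{pq}\rho_{rs}\rho_{pr}\rho_{qs}|\lesssim\|\cormat-\iden\|_F^4$ in addition to the two- and three-index sums you list. Your coefficient bounds $|\partial^{\boldsymbol\lambda}f(1,1,\rho_{pq})|\lesssim|\rho_{pq}|^{2-\lambda_3}$ and the conditioning identity $\mathbb{E}[\bar{S}_{pq}\mid X_{p1},\dots,X_{pn}]=\rho_{pq}\bar{S}_{pp}$ are both correct and do supply enough $\rho$-factors once the disjoint tuples are kept, so the repair is local; but as written the accounting for $\mathbb{E}[R^2]$ omits the configurations that actually dominate.
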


\begin{lemma}[Probability bound for $III$] \label{lem:ProbBddIII}
For any $0 < c < \frac{1}{2}$, there exists $C > 0 $ such that
\[
P\left(|III| > C \frac{m^2}{n^{5c}}\right) \lesssim (n^{c-1} \log m +  n^{c - 1/2} \sqrt{ \log m})
\]
for large enough $m, n$.
\end{lemma}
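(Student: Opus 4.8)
The goal is to show the remainder term $III = \sum_{1\le p<q\le m} III_{pq}$ is negligible with high probability. Looking at the explicit Lagrange form \eqref{remainder}, each $III_{pq}$ is a sum of finitely many (there are $\binom{5+2}{2}=21$ multi-indices of size $5$) terms of the shape
\[
\frac{(\rho_{pq}+k_{pq}\bar S_{pq})^{2-\lambda_1}\,\bar S_{pp}^{\lambda_1}\bar S_{qq}^{\lambda_2}\bar S_{pq}^{\lambda_3}}{(1+k_{pq}\bar S_{pp})^{1+\lambda_2}(1+k_{pq}\bar S_{qq})^{1+\lambda_3}},
\]
with $\lambda_1+\lambda_2+\lambda_3=5$ and $k_{pq}\in(0,1)$. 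The plan is to work on the ``good'' event
\[
\mathcal{G}:=\Bigl\{\max_{1\le p\le m}|\bar S_{pp}|\le \tfrac12,\ \max_{1\le p<q\le m}|\bar S_{pq}|\le n^{-c}\Bigr\}
\]
for a suitably chosen threshold governed by the exponent $c\in(0,\tfrac12)$. On $\mathcal{G}$ the two denominators are bounded below by a universal constant (since $k_{pq}|\bar S_{pp}|\le 1/2$), and $|\rho_{pq}+k_{pq}\bar S_{pq}|\le 1+n^{-c}\lesssim 1$ because $|\rho_{pq}|\le 1$. Hence on $\mathcal{G}$,
\[
|III_{pq}|\;\lesssim\;\sum_{\lambda_1+\lambda_2+\lambda_3=5}|\bar S_{pp}|^{\lambda_1}|\bar S_{qq}|^{\lambda_2}|\bar S_{pq}|^{\lambda_3}\;\lesssim\;n^{-c}\sum_{\lambda_1+\lambda_2+\lambda_3=5}\Bigl(\tfrac12\Bigr)^{\lambda_1+\lambda_2}\,n^{-c(\lambda_3-1)},
\]
and since the dominant contribution comes from $\lambda_3$ as small as possible subject to the bound still being $\lesssim n^{-5c}\cdot(\text{something})$ — more precisely, every term carries at least $|\bar S_{pq}|\cdot(\text{four more small factors, each }\lesssim n^{-c})$, giving $|III_{pq}|\lesssim n^{-5c}$ uniformly. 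Summing over the $\binom m2\le m^2$ pairs yields $|III|\lesssim m^2 n^{-5c}$ on $\mathcal{G}$, which is exactly the event in the displayed probability bound. (One should be slightly careful: the factors $\bar S_{pp},\bar S_{qq}$ are bounded by the constant $1/2$ rather than by $n^{-c}$, so the cleanest argument bounds $|\bar S_{pp}|^{\lambda_1}|\bar S_{qq}|^{\lambda_2}\le 2^{-(\lambda_1+\lambda_2)}\le 1$ and uses $|\bar S_{pq}|^{\lambda_3}\le n^{-5c}$ when $\lambda_3=5$, while for $\lambda_3<5$ one has $\lambda_1+\lambda_2\ge 1$ contributing an extra factor and still $|\bar S_{pq}|^{\lambda_3}\le n^{-c\lambda_3}$; in all cases the product is $\lesssim n^{-5c}$ after absorbing constants, so the uniform bound $|III_{pq}|\lesssim n^{-5c}$ holds.)

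It then remains to bound $P(\mathcal{G}^c)$. By a union bound, $P(\mathcal G^c)\le \sum_p P(|\bar S_{pp}|>1/2)+\sum_{p<q}P(|\bar S_{pq}|>n^{-c})$. For the diagonal terms, $\bar S_{pp}=n^{-1}\sum_i(X_{pi}^2-1)$ is a normalized sum of i.i.d. centered sub-exponential variables, so a standard Bernstein / large-deviation inequality gives $P(|\bar S_{pp}|>1/2)\le 2e^{-cn}$, and $m\cdot e^{-cn}$ is exponentially small (recall $m\le\kappa n$), hence swallowed by the stated bound. For the off-diagonal terms, $\bar S_{pq}=n^{-1}\sum_i(X_{pi}X_{qi}-\rho_{pq})$ is again a normalized sum of i.i.d. centered variables, each of which is a product of jointly normal variables and therefore sub-exponential with a universal parameter (uniformly in $\rho_{pq}\in[-1,1]$). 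A Bernstein-type tail bound at level $t=n^{-c}$ gives, for $n^{-c}$ in the sub-Gaussian-like regime (i.e. $n^{-c}\lesssim 1$, which always holds),
\[
P(|\bar S_{pq}|>n^{-c})\;\lesssim\;\exp\bigl(-c\,n\cdot n^{-2c}\bigr)\;=\;\exp(-c\,n^{1-2c}).
\]
Summing over $m^2\le\kappa^2 n^2$ pairs gives $P(\mathcal G^c)\lesssim n^2\exp(-cn^{1-2c})$, which for $c<\tfrac12$ decays faster than any polynomial and is in particular $\lesssim n^{c-1}\log m + n^{c-1/2}\sqrt{\log m}$ for large $m,n$.

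\medskip
\textbf{Main obstacle.} The genuinely delicate point is matching the right-hand side of the lemma precisely: the authors state the error probability as $n^{c-1}\log m + n^{c-1/2}\sqrt{\log m}$ rather than the exponentially small $n^2 e^{-cn^{1-2c}}$ that a crude Bernstein bound produces. This polynomial form strongly suggests the lemma is meant to be proved at a sharper, matched truncation level — i.e., the threshold for $|\bar S_{pq}|$ should be taken as something like $\sqrt{\log m/n}$ (the natural maximal order of the off-diagonal fluctuations over $\binom m2$ pairs) rather than a fixed power $n^{-c}$, and the exponent $5c$ in $m^2/n^{5c}$ is then an artifact of re-parametrizing $\sqrt{\log m/n}\asymp n^{-c}$-type scales. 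So the real work is: (i) choose the truncation event with the correct polylog threshold so that the moderate-deviation / Bernstein tail for $\bar S_{pq}$ yields exactly a probability of order $m^{-1}\cdot\mathrm{poly}$, which after summing over $m^2$ pairs and a change of variables in the exponent produces the stated $n^{c-1}\log m$ and $n^{c-1/2}\sqrt{\log m}$ terms; and (ii) verify the two-regime split in the Bernstein bound (the $\log m$ term coming from the sub-Gaussian regime, the $\sqrt{\log m}$ term from the sub-exponential regime of the product $X_{pi}X_{qi}$), handling carefully the uniformity of the sub-exponential norms in $\rho_{pq}$. The algebraic manipulation of $III_{pq}$ on the good event is, by contrast, entirely routine once the denominators are bounded away from zero.
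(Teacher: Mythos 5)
There is a genuine gap in your bound of $III$ on the good event. You only control the diagonal deviations at the constant level $\max_p|\bar S_{pp}|\le \tfrac12$, but the multi-indices with $|\boldsymbol\lambda|=5$ include, for example, $\boldsymbol\lambda=(5,0,0)$ and $(3,2,0)$, for which the product $|\bar S_{pp}|^{\lambda_1}|\bar S_{qq}|^{\lambda_2}|\bar S_{pq}|^{\lambda_3}$ is only $O(1)$ on your event $\mathcal G$: there is no factor $n^{-5c}$, nor even $n^{-c}$, since $\lambda_3$ may be $0$. Your parenthetical claim that ``in all cases the product is $\lesssim n^{-5c}$ after absorbing constants'' is therefore false, and with your truncation the best available conclusion on $\mathcal G$ is $|III|\lesssim m^2$. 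The paper avoids this by truncating \emph{all} entries, diagonal included, at the same level: it works on the event $\max_{1\le p,q\le m}|\bar S_{pq}|\le n^{-c}$, so each of the five factors is at most $n^{-c}$ and every $III_{pq}$ is $O(n^{-5c})$ once the denominators are bounded below by $(1-n^{-c})$-type quantities. The repair is easy --- $\bar S_{pp}=n^{-1}\sum_i(X_{pi}^2-1)$ satisfies the same sub-exponential tail bound, so you can truncate the diagonals at $n^{-c}$ as well at no extra cost --- but as written the step fails. (A smaller point, inherited from the paper's displayed remainder: when $\lambda_1\ge 3$ the exponent $2-\lambda_1$ is negative, so an upper bound on $|\rho_{pq}+k_{pq}\bar S_{pq}|$ does not control that factor; in the correctly written Lagrange remainder the exponent is $2-\lambda_3$ and the corresponding derivative vanishes for $\lambda_3>2$, so this is cosmetic.)

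Your ``main obstacle'' paragraph, by contrast, worries about a non-problem. The right-hand side $n^{c-1}\log m+n^{c-1/2}\sqrt{\log m}$ is not the signature of a delicate matched-threshold or moderate-deviation argument: in the paper it arises simply because the maximum of the $O(m^2)$ sub-exponential variables $\bar S_{pq}$ has $\psi_1$-Orlicz norm of order $n^{-1}\log m+n^{-1/2}\sqrt{\log m}$ by the maximal inequality of \citet[Lemma 2.2.10]{VDVW}, and a Markov-type bound at level $n^{-c}$ converts this into the stated probability. Your union bound with Bernstein tails, giving $P(\mathcal G^c)\lesssim n^2\exp(-c\,n^{1-2c})$, is an even smaller quantity, and since the lemma only asserts ``$\lesssim$'' it would be perfectly acceptable once the diagonal truncation is fixed; no $\sqrt{\log m/n}$ threshold or two-regime refinement is needed.
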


The proofs of \lemsref{BerryEsseen} and \lemssref{varbdd} are separately given in the next two sections. \lemref{ProbBddIII} is proved by a standard maximal inequality in \appref{PftailIII}. With these tools we can now establish \thmref{main} based on the general approach laid out in \citet{CaiMa}.

\begin{proof} [Proof of \thmref{main}]
From \eqref{ourtestform} and \eqref{TandOthers} the power of our test can be written as
\begin{equation}
\mathbb{E}[\psi]
 = P\left(I +  II   + III -  \frac{m(m-1)}{2n}> \frac{m}{n}z_{\alpha}- 2^{-1}\|{\bf R} - \iden\|_F^2\right) \label{FormOfPower}.
\end{equation} By dividing the set $\Theta(b)$ into two subsets
\[
\Theta(b, B) =   \{{\bf R}:B \sqrt{m/n} > \|{\bf R} - \iden \|_F \geq b \sqrt{m/n} \}
\]
and
\[
\Theta(B) =  \{\cormat: \|\cormat - \iden \|_F \geq B \sqrt{m/n}\},
\]
where $B$ is a sufficiently large constant depending on $(\alpha, b, \kappa)$, it suffices to show
\begin{equation} \label{Theta( B)}
\liminf_{n \rightarrow \infty} \inf_{\Theta(B)} \mathbb{E}_{\cormat} [\psi] \geq \bar{\Phi}\left(z_{\alpha} - \frac{b^2}{2}\right)
\end{equation}
and
\begin{equation} \label{Theta(b, B)}
\sup_{\Theta(b, B)}\left|\mathbb{E}_{\cormat} \psi - \bar{\Phi}\left(z_{\alpha} - \frac{\|\cormat - \iden\|_F^2}{2 m/n}\right)\right| \longrightarrow 0
\end{equation}
as $m , n \longrightarrow \infty$, $m/n \leq \kappa$. Together, they lead to the theorem since  \eqref{Theta(b, B)} implies that
\[
\lim_{n \rightarrow \infty} \inf_{\Theta(b, B)} \mathbb{E}_{\cormat} \psi =  \lim_{n \rightarrow \infty} \inf_{\Theta(b, B)} \bar{\Phi}\left(z_{\alpha} - \frac{\|\cormat - \iden\|_F^2}{2 m/n}\right) = \bar{\Phi}\left(z_{\alpha} - \frac{b^2}{2}\right).
\]

To prove \eqref{Theta( B)} we first suppose that $B$ is larger than  $\sqrt{3 z_{\alpha}}$ , and let $\delta$ be any positive constant satisfying $0 < \delta  \leq 4^{-1} z_{\alpha}$.   By definition, for any $\cormat \in \Theta(B)$,  it must be the case that $\|\cormat - \iden\|_F = \tau \sqrt{m/n}$ for some $\tau \geq B$. Together with the fact that  $mn^{-1}z_{\alpha} - 2^{-1}\|\cormat - \iden\|_F^2 \leq - \frac{m\tau^2}{n6} $ and $\delta \leq 12^{-1} \tau^2$ which are consequences of the choice of $B$, by a union bound and Chebyshev's inequality we continue from \eqref{FormOfPower} and obtain
\begin{align}
1 - \mathbb{E}[\psi]
%&\leq P \left(\left|I + II + III - \frac{m(m-1)}{2n}\right| \geq \frac{\tau^2 m}{6n}  \right)\notag\\
%&= P \left(T - \frac{m(m-1)}{2 n} \leq  \frac{m}{n}z_{1  - \lambda}\right) \notag\\
%&= P \left(\left|T - \frac{m(m-1)}{2 n} -2^{-1}\|R - I\|_F^2 \right| \geq \left| z_{1  - \lambda}(m/n) -2^{-1}\|R - I\|_F^2 \right|\right) \notag \\
&\leq P \left(\left|I + II - \frac{m(m-1)}{2n}\right| \geq \frac{\tau^2 m}{6n}  - \delta \frac{m}{n}\right)   + P\left(|III| > \delta \frac{m}{n}\right)\notag\\
%&\leq \frac{2n^2 m^{-2}\left(\mathbb{E}[I^2] + \mathbb{E}\left[\left(II - \frac{m(m-1)}{2n}
%\right)^2\right]\right)}{(6^{-1} \tau^2 - \delta)^2} +
%P\left(|III| > \delta \frac{m}{n}\right) \notag\\
&\leq 288\tau^{-4} n^2 m^{-2}\left(\mathbb{E}[I^2] + \mathbb{E}\left[\left(II - \frac{m(m-1)}{2n}
\right)^2\right]\right)+
P\left(|III| > \delta \frac{m}{n}\right).  \label{I1I2}
\end{align}
Substituting $\|\cormat - \iden\|_F$ for $\tau \sqrt{m/n}$ into the bounds for $\mathbb{E}[I^2]$ and $\mathbb{E}[(II - \frac{m(m-1)}{2n})^2]$ in \lemsref{BerryEsseen} and  \lemssref{varbdd}, it is seen that the first term in \eqref{I1I2} is bounded by a term of order
\[
\tau^{-4} + o(1) \left(\sum_{k = 0}^4 \tau^{-k}\right)
\]
%as $m, n \longrightarrow \infty$.
%\begin{multline*}
%\frac{n^2}{m^2 \tau^4}\Biggl\{ \frac{\tau^4 m^2/n^2}{n} + \frac{m^2}{n^3}\left(1 + \frac{\tau^2 m}{n}\right)  +
% \frac{m^4}{n^5} +  \left(\frac{m^3}{n^4} + \frac{1}{n^2}\right)\sqrt{\frac{m}{n}}\tau+
% \frac{\tau^2 m/n}{n}   + \\
%  \frac{m(m-1)n(n-1)}{n^4} + \frac{2(n^2 - n)}{n^4}O\left( \left\{ (1 + m)\frac{m\tau^2}{n} + \frac{\tau^4 m^2}{n^2}\right\}\right)\Biggr\}\\
%\lesssim \frac{1}{n \tau^4} + \frac{1}{n\tau^4} + \frac{ m}{\tau^2n^2}+ \frac{m^2}{\tau^4n^3} + \frac{m^{3/2}}{\tau^3n^{5/2}} + \frac{1}{\sqrt{n}\tau^3  m^{3/2}}
%+\frac{1}{m \tau^2} + \frac{O(1)}{\tau^4} + O(\frac{1}{\tau^2 n} + \frac{1}{n^2})\\
%= o(1) + \frac{O(1)}{\tau^4} \text{ as $m,n \longrightarrow \infty$},
%\end{multline*}
%where the $o(1), O(1)$ term above is uniform over $\Theta(B)$.
 Moreover, the second term  in \eqref{I1I2}
converges to $0$ as $m, n \longrightarrow \infty$ by \lemref{ProbBddIII} since $\delta m/n $ is larger than  $m^2/n^{5c}$ asymptotically for any constant $2/5 <  c < 1/2$, given that $m/n \leq \kappa$ . %for any $2/5 < c < 1/2$ and any constant $C$.
%Since $B < \tau$, this, together with the fact that the second term in \eqref{I1I2} also
%\[
%P\left(|III| < \delta \frac{m}{n}\right) \longrightarrow 0 \text{ as } m,n \longrightarrow \infty, m/n \leq \kappa
%\]
%by \lemref{ProbBddIII},
They together imply  that the constant $B= B(\alpha, b, \kappa)$ can be taken large enough so that
\[
1 - \inf_{\Theta(B)}\mathbb{E}_{\cormat}[\psi] \leq \Phi\left(z_{\alpha} - \frac{b^2}{2}\right) \text{ as } m, n \longrightarrow \infty,
\]
which is equivalent to \eqref{Theta( B)}.
%\[
%\liminf_{m, n \longrightarrow \infty} \mathbb{E}_R[\psi] \leq 1 - \Phi\left(z_{\alpha} - \frac{b^2}{2}\right).
%\]

To show \eqref{Theta(b, B)},  the uniform convergence of power on the ``stripe" of alternatives with the signal  $\|\cormat - \iden\|_F$ bounded from above and below in size, we shall first establish that
 \begin{equation}\label{Itildebdd}
P\left(|\tilde{I}| \geq \frac{m^{1- \gamma}}{n}\right) = o(1) \quad  \text{   as } \quad m, n\longrightarrow \infty \quad \text{ and }\quad  m/n \leq \kappa,
\end{equation}
uniformly over the set $\Theta(b, B)$, where
\[
\tilde{I}:= II - \frac{m(m-1)}{2n} + III.
\]
and $\gamma$ is any number such that $0 < \gamma < 1/2$.
By a union bound we have
\begin{align}
P\left(|\tilde{I}| \geq \frac{m^{1 - \gamma}}{n}\right)\notag
 &\leq P\left(\left|III\right| \geq \frac{m^{1- \gamma}}{2n} \right) + P\left( \left|II - \frac{m(m-1)}{2n}\right| \geq \frac{m^{1 - \gamma}}{2n} \right)\notag\\
& \lesssim n^{c-1} \log m +  n^{c - 1/2} \sqrt{ \log m} + \frac{n^2}{m^{2 ( 1- \gamma)}}\mathbb{E}\left[\left(II - \frac{m(m-1)}{2n}\right)^2\right] \label{bddtobeapplied}
\end{align}
for any $(2+\gamma)/5 < c < 1/2$ and large enough $m, n$. The last inequality comes from the Chebyshev inequality and the fact that, by taking $(2+\gamma)/5 < c < 1/2$ in \lemref{ProbBddIII}, for large enough $m, n$, under $m/n \leq \kappa$,  we have
\begin{align*}
P\left(|III| \geq \frac{m^{1 - \gamma}}{2n}\right) \leq P\left(|III| \geq  \frac{m^2}{2\kappa^{1+\gamma}n^{2 + \gamma}}\right)\leq P\left(|III| \geq C \frac{m^2}{n^{5c}}\right),
\end{align*}
where the constant $C$ is same as the one in \lemref{ProbBddIII}. Since $\cormat \in \Theta(b, B)$, it must be that $\|\cormat - \iden\|_F = \tau \sqrt{m/n}$ for some $b \leq \tau \leq B$, and substituting this into the variance bound in \lemref{varbdd} it can be easily seen that
\begin{equation} \label{perfect1}
\frac{n^2}{m^{2 ( 1- \gamma)}}\mathbb{E}\left[\left(II - \frac{m(m-1)}{2n}\right)^2\right] \longrightarrow 0
\end{equation}
uniformly over $\Theta(b, B)$ as $m, n \longrightarrow \infty$, $m/n \leq \kappa$.  This gives \eqref{Itildebdd}  since $c < 1/2$ in \eqref{bddtobeapplied}.

 To finish the proof of  \eqref{Theta(b, B)},   by union bound arguments one has
\begin{equation*}
\mathbb{E}[\psi]\leq P\left( I \geq  \frac{mz_{\alpha}}{n}- \frac{\|\cormat - \iden\|_F^2}{2} - \frac{m^{1 - \gamma }}{ n}\right) +
 P\left( |\tilde{I}| \geq \frac{m^{1 - \gamma}}{n} \right)
\end{equation*}
and
\begin{equation*}
\mathbb{E}[\psi] \geq
P\left( I \geq  \frac{mz_{\alpha}}{n}- \frac{\|\cormat - \iden\|_F^2}{2} + \frac{m^{1 - \gamma}}{n}\right) -  P\left( |\tilde{I}| \geq \frac{m^{1 - \gamma}}{n} \right),
\end{equation*}
which collectively imply
\begin{multline} \label{combiningUnions}
\left|\mathbb{E}[\psi] - \bar{\Phi}\left( \frac{mz_{\alpha}n^{-1}- 2^{-1}\|\cormat - \iden\|_F^2}{\sqrt{\text{Var}(I)}}\right)\right|\\
\leq \sup_{t \in \mathbb{R}}\left|P\left( \frac{I}{\sqrt{\text{Var}(I)}} \geq t \right) -
 \bar{\Phi}\left(t\right) \right|
 +   2P\left( |\tilde{I}| \geq \frac{m^{1 - \gamma}}{n} \right)+ \frac{2m^{1- \gamma}n^{-1}}{\sqrt{\text{Var}(I)}}
\end{multline}
since $|\bar{\Phi}(x \pm \epsilon) - \bar{\Phi}(x)| \leq \epsilon$ for any $x \in \mathbb{R}$ and $\epsilon \geq 0$.  Moreover, all three terms on the right hand side of \eqref{combiningUnions} are of order $o(1)$ uniformly over $\Theta(b, B)$. The first two terms are so by \lemref{BerryEsseen}$(ii)$ and \eqref{Itildebdd}, and the last term is so since by \lemref{BerryEsseen}$(i)$, $\sqrt{\text{Var}(I)} = m/n + o(m^{1 - \gamma}/n)$ where the $o(m^{1 - \gamma}/n)$ term is also uniform over $\Theta(b, B)$.
%
%
%, which gives
%\begin{multline} \label{firsto1}
%\sup_{\Theta(b, B)}\left|\mathbb{E}_R[\psi] - \bar{\Phi}\left( \frac{ m z_{1 - \lambda}n^{-1}-2^{-1}\|{\bf R} - {\bf I}_m\|_F^2}{\sqrt{\text{Var}(I)}}\right)\right| \leq\\
% \sup_{t \in \mathbb{R}}\left|P\left( \frac{I}{\sqrt{\text{Var}(I)}} \geq t \right) -
%\bar{\Phi}\left(t\right) \right| + o(1)
%\end{multline}
%together with  \eqref{Itildebdd}.
Finally, by  \lemref{BerryEsseen}$(i)$ as $m, n \longrightarrow \infty$, $m/n \leq \kappa$, we also have
\begin{equation} \label{perfect2}
\sup_{\Theta(b, B)} \left| \frac{\text{Var}(I)}{m^2/n^2} - 1 \right| \longrightarrow 0,
\end{equation}
and it is not hard to see that this implies
\begin{equation*}\label{secondo1}
\sup_{\Theta(b, B)}\left|\bar{\Phi}\left(z_{\alpha} - \frac{\|\cormat - \iden\|_F^2}{2 m/n}\right) -\bar{\Phi}\left( \frac{m z_{\alpha}n^{-1}-2^{-1}\|\cormat - \iden\|_F^2}{\sqrt{\text{Var}(I)}}\right)\right| \longrightarrow 0.
\end{equation*}
 Applying these facts to \eqref{combiningUnions} leads to \eqref{Theta(b, B)}.
\end{proof}

 In establishing the normal tail form of our power function,  perhaps the most important step is singling out $I$ as the main term that drives the asymptotic normality of the left hand side in \eqref{TandOthers}  under the ``stripe" of alternative $\Theta(b, B)$ via the Berry-Esseen bound in \lemref{BerryEsseen}$(ii)$.  We note that $I$ is already a rather simple term to handle, but proving \lemref{BerryEsseen}$(ii)$ for it still takes considerable effort in the next section.
% Moreover, we didn't intend to present the sharpest possible bounds in our three building lemmas: \lemsref{BerryEsseen}, \lemssref{varbdd} and \lemssref{ProbBddIII}; they are nevertheless good enough for our purpose.
Moreover,  $m/n \leq \kappa$ has been used at different places, the convergences in \eqref{perfect1} and \eqref{perfect2} for instances. However, the assumption is mostly a convenient one for such statements regarding terms $I$ and $II$,  since the estimates presented in \lemsref{BerryEsseen} and \lemssref{varbdd} are not the sharpest possible, for either aesthetic purpose or saving us some  effort on refining them in the next two sections.

 It is the remainder term $III$ that truly prevents us from removing the upper bound on $m/n$. In order to show it tends to zero in probability, as in \eqref{bddtobeapplied}, we applied the crude tail bound in \lemref{ProbBddIII} based on a maximal inequality (see \appref{PftailIII}). Such an estimate doesn't take the correlations among the constituent summands $III_{pq}$ into account,  as was done for the $II_{pq}$'s with respective to $II- (m-1)m(2n)^{-1}$  via explicitly estimating its second moment in \lemref{varbdd}. The major obstacle to computing $\mathbb{E}[III^2]$ is the  {\it random} coefficients
\begin{equation} \label{trouble}
\frac{(\rho_{pq} + k_{pq} \bar{S}_{pq})^{2 - \lambda_1}}{{(1 + k_{pq}\bar{S}_{pp})}^{1 + \lambda_2 } {( 1 +k_{pq}\bar{S}_{qq} )}^{1 + \lambda_3 }}
\end{equation}
attached to the products $\bar{S}_{pp}^{\lambda_1} \bar{S}_{qq}^{\lambda_2}\bar{S}_{pq}^{\lambda_3}$  in definition \eqref{remainder}. Unlike $II$, where the constituents $II_{pq}$ have {\it constant} coefficients, not only is the coefficient in \eqref{trouble}  a {\it rational} functions in $\bar{S}_{pp}$, $\bar{S}_{pq}$, $\bar{S}_{qq}$, but it also involves the intractable random quantity $k_{pq} = k_{pq}(\bar{S}_{pp}, \bar{S}_{pq}, \bar{S}_{qq}) \in (0, 1)$. As such, there is no straightforward way of applying Isserlis's theorem (\thmref{isserlis}) to compute the moment $\mathbb{E}[III^2]$ like we did for $\mathbb{E}[(II- (m-1)m(2n)^{-1})^2]$ in \secref{varbddsec}. In fact, even with the help of \texttt{mathematica}, it still took us substantial effort to get our bound in \lemref{varbdd} as seen later. At this moment, we cannot think of other ways to control term $III$.

\section{The Berry Esseen bound for $I$} \label{sec:BEsec}

We will prove \lemref{BerryEsseen} in this section. For our presentation, given a finite set $D$  and $|D|$ duples $(p_d, q_d) \in [m] \times [ m ]$ indexed by a subscript $d$ that ranges over $D$, we define the central moment quantities
\[
\mathcal{M}_{\substack{(p_d, q_d)\\ d \in D}} := \mathbb{E}\left[ \prod_{d \in D}(X_{p_d}X_{q_d} - \rho_{p_dq_d})\right].
 \]

Recall that $I$ is defined as $\sum_{p <q }I_{pq}$, where each $I_{pq}$ is given in \eqref{I_pq}. We first observe that  $I$ has a natural martingale structure: For each $i = 1, \dots, n$,  let $\mathcal{F}_i$ be the sigma-algebra generated by $\{X_{p j}: 1\leq p \leq m; 1 \leq j \leq i\}$ and $\mathcal{F}_0$ be the trivial sigma algebra, and define
\begin{equation} \label{Yidef}
Y_i := \frac{2}{n^2}\sum_{p < q} \sum_{j < i}(X_{pi}X_{qi} - \rho_{pq})(X_{pj}X_{qj} - \rho_{pq}) \text{ for } i = 2, \dots, n
\end{equation}
as well as
\begin{equation}\label{Y01def}
Y_0 = Y_1 := 0.
\end{equation}
Then $I = \sum_{i = 0}^nY_i$, and $(Y_i)_{i =0}^n$ is a the sequence of martingale differences   since
\[
\mathbb{E}[Y_i|\mathcal{F}_{i-1}] =  \sum_{p < q} \frac{2}{n^2}\sum_{j < i}(X_{pj}X_{qj} - \rho_{pq})\mathbb{E}[X_{pi}X_{qi} - \rho_{pq}] = 0
\] for $i \geq 2$, where $\mathbb{E}[Y_i|\mathcal{F}_{i-1}] = 0$ is trivial for $i = 0, 1$.

With the observations just made it is easy to see that  $\mathbb{E}[I] = 0$ and

\begin{equation} \label{sumYi2}
\text{Var}[I] = \mathbb{E}[I^2] = \sum_{i = 2}^n \mathbb{E}[Y_i^2].
\end{equation}
By the i.i.d.'ness of the samples,  for each $i = 2, \dots, n$,
\begin{align}
\mathbb{E}[Y_i^2] &= \frac{4}{n^4} \sum_{\substack{1 \leq p_d < q_d \leq m\\ d= 1, 2 }} \mathcal{M}_{\substack{(p_d, q_d)\\d \in[2] }} \left(\sum_{1  \leq j, j' < i}\mathbb{E}\left[(X_{p_1j'}X_{q_1j'} - \rho_{p_1q_1})(X_{p_2j}X_{q_2j} - \rho_{p_2q_2})\right] \right)
 \notag\\
 &= \frac{4(i-1)}{n^4}\sum_{\substack{1 \leq p_d < q_d \leq m\\ d= 1, 2 }} \mathcal{M}_{\substack{(p_d, q_d)\\d \in [2] }}^2
 \label{innermost},
\end{align}
where, to clarify, $\sum_{\substack{1 \leq p_d < q_d \leq m\\d= 1, 2}}$ means a summation over all pairs of duples $\{(p_1 ,  q_1 ), (p_2, q_2)\}$ such that $1 \leq p_d < q_d \leq m$ for each $d = 1, 2$. We have the equality in \eqref{innermost} because $\mathbb{E}[(X_{p_1j'}X_{q_1j'} - \rho_{p_1q_1})(X_{p_2j}X_{q_2j} - \rho_{p_2q_2})]$ equals $\mathcal{M}_{\substack{(p_d, q_d)\\d \in \{1, 2\} }}$ when $j = j'$ and zero otherwise.  For $k = 2, 3, 4$, let
\begin{equation} \label{DefSk}
\mathbb{S}(k) := \sum_{\substack{1 \leq p_d < q_d \leq m\\ d = 1, 2\\|\cup_{d  = 1}^2\{p_d, q_d\}| = k}} \mathcal{M}_{\substack{(p_d, q_d)\\d \in [2] }}^2
\end{equation}
correspond to a sum over all duples $1 \leq p_d < q_d \leq m$, $d = 1, 2$ such that as a set $\cup_{d  = 1}^2\{p_d, q_d\}$ has cardinality $k$. From \eqref{sumYi2} and \eqref{innermost} we can write
\begin{equation} \label{VarIForm}
\text{Var}[I] = \frac{2n(n-1)}{n^4}\sum_{k = 2}^4 \mathbb{S}(k).
\end{equation}
since $\sum_{i = 2}^n (i-1) = 2^{-1}(n^2 - n)$.
In \appref{bddforBs}, we will show the following estimates hold:
\begin{align}
\mathbb{S}(2)  &= 2^{-1}m(m-1) + O(\|\cormat - \iden\|_F^2) \label{S2est}
\\
\mathbb{S}(3)  &= O( m \|\cormat - \iden\|_F^2 + \|\cormat - \iden\|_F^4)  \label{S3est}
\\
\mathbb{S}(4)  &=O(  \|\cormat - \iden\|_F^4)  \label{S4est}
\end{align}
Substituting these into \eqref{VarIForm} results in \lemref{BerryEsseen}$(i)$. In fact, this general strategy of decomposing a sum according to the cardinality of an index set as in \eqref{DefSk} and forming separate estimates will be employed repeatedly in the sequel.

%With these three bounds we get that
%\[
%\mathbb{E}[Y_i^2] = \frac{4(i-1)}{n^4}\left\{\frac{m(m-1)}{2} +O(m\|R  - I\|_F^2 +\|R  - I\|_F^4 )  \right\} \text{ for } i = 2, \dots n.
%\]
%Given that $\sum_{i = 2}^n (i-1) = \frac{n^2 - n}{2}$, we have proved \eqref{bddVarI}.

We shall now prove the normal approximation in \lemref{BerryEsseen}$(ii)$.  With a  Berry-Esseen theorem for martingale central limit theorem in \citet{BrownHeyde},  it suffices to verify the fourth moment conditions
\begin{equation} \label{BE1}
\sum_{i = 2}^n \mathbb{E}[Y_i^4] =  o(m^4/n^4) \sum_{k =0}^4 \|\cormat - \iden\|_F^k
\end{equation}
and
 \begin{align}
\mathbb{E}\left[\left(\sum_{i = 2}^n \mathbb{E}[Y_i^2 |\mathcal{F}_{i-1}]- \text{Var}(I)\right)^2 \right]  &=   \mathbb{E}\left[\left(\sum_{i = 2}^n \mathbb{E}[Y_i^2 |\mathcal{F}_{i-1}]\right)^2\right]- \text{Var}(I)^2\notag\\
&= o(m^4/n^4) \sum_{k = 0}^8 \|\cormat - \iden\|_F^k
%&= O\left(\frac{m^4}{n^5} \sum_{k = 0}^4 \|\cormat - \iden\|^k_F + \frac{m^3}{n^4}+ \frac{\sum_{k = 0}^3 m^k\|\cormat - \iden\|_F^{2(4-k)} +  m^3\|\cormat - \iden\|_F}{n^4}\right)
 \label{BE2}.
\end{align}
Note that the equality before \eqref{BE2} holds because $\mathbb{E}[\sum_{i = 2}^n \mathbb{E}[Y_i^2 |\mathcal{F}_{i-1}]] = \mathbb{E}[\sum_{i = 2}^nY_i^2] = \text{Var}(I)$.

We will first show \eqref{BE1}.  For any $2 \leq i \leq n$, on raising $Y_i$ to the $4$th power and taking expectation, by the i.i.d.'ness of samples,  we have
\begin{align}
&\mathbb{E}[Y_i^4] \notag\\
&= \frac{16}{n^8} \sum_{\substack{1 \leq p_d < q_d \leq m\\d= 1, 2, 3, 4}}  \left\{\mathbb{E}\left[\prod_{d =1}^4 (X_{p_d i}X_{q_d i}- \rho_{p_d q_d})\right] \sum_{ \substack{1 \leq j_d< i\\ d= 1, 2, 3, 4}} \mathbb{E}\left[ \prod_{d = 1}^4  ( X_{ p_d j_d} X_{ q_d j_d}- \rho_{p_d q_d}) \right]\right\}\notag\\
&= \frac{16}{n^8} \sum_{\substack{1 \leq p_d < q_d \leq m\\d= 1, 2, 3, 4}}  \left\{ \mathcal{M}_{\substack{ (p_d, q_d)\\ d \in [4]}} \sum_{ \substack{1 \leq j_d< i\\ d= 1, 2, 3, 4}} \mathbb{E}\left[ \prod_{d = 1}^4  ( X_{ p_d j_d} X_{ q_d j_d}- \rho_{p_d q_d}) \right]\right\}\notag\\
&=  O\left( \frac{ i^2}{n^8} \right) \sum_{\substack{1 \leq p_d < q_d \leq m\\d= 1, 2, 3, 4}}  \mathcal{M}_{\substack{ (p_d, q_d)\\ d \in [4]}} \label{EY4} ,
\end{align}
where the summations $\sum_{\substack{1 \leq p_d < q_d \leq m\\d= 1, 2, 3, 4}}$ and $\sum_{ \substack{1 \leq j_d< i\\ d= 1, 2, 3, 4}}$ are defined similarly as the one in \eqref{innermost}. The last equality in \eqref{EY4} is explained as follows: For a fixed $i$ and a given set of variables index pairs $\{(p_d ,  q_d ): d =1, \dots, 4\}$, with any choice of the sample indices $j_1, \dots, j_4$ in order for the expectation
\begin{equation} \label{sumofsummands}
%\sum_{ \substack{1 \leq j_d< i\\ d= 1, 2, 3, 4}}
\mathbb{E}\left[ \prod_{d = 1}^4  ( X_{ p_d j_d} X_{ q_d j_d}- \rho_{p_d q_d}) \right]
\end{equation}
to be non-zero, by independence  it must be true that  there exists a permutation function $\pi \in \mathcal{S}_4$ so that
\begin{equation}\label{nonzerocondition}
j_{\pi(1)} = j_{\pi(2)}, \ \ j_{\pi(3)} = j_{\pi(4)}.
\end{equation}
%Otherwise, there would be a $d' \in \{1, 2, 3, 4\}$ so that $j_{d'} \not = j_{d}$ with all other $d \not= d'$ in $\{1, 2, 3, 4\}$, and by independence of the samples
%\begin{multline}
%\mathbb{E}\left[ \prod_{d = 1}^4  ( X_{ p_d j_d} X_{ q_d j_d}- \rho_{p_d q_d}) \right] = \\
%\mathbb{E}\left[X_{p_{d'}j_{d'}}X_{q_{d'}j_{d'}} - \rho_{p_{d'}q_{d'}}\right] \mathbb{E}\left[\prod_{\substack{d = 1, 2, 3, 4 \\ d \not = d'}}  ( X_{ p_d j_d} X_{ q_d j_d}- \rho_{p_d q_d})\right] = 0.
%\end{multline}
Since the condition in \eqref{nonzerocondition} implies that $|\cup_{d = 1}^4\{j_d\}  | \leq 2$,  at most $O({i-1 \choose 2}) = O(i^2)$ many expectations in  \eqref{sumofsummands} can be non-zero.  This leads to \eqref{EY4} since the expectations in \eqref{sumofsummands}, when they are non-zero, can be uniformly bounded regardless of the choice for $\{(p_d, q_d, j_d); d =1, \dots, 4\}$, owing to our assumptions at the beginning of \secref{main} and \thmref{isserlis} on higher order normal moments.
Provided that $\sum_{i = 2}^n i^2 = 6^{-1} (2n^3 + 3 n^2 + n - 6)$, with \eqref{EY4} we further write
\begin{equation}\label{sumofexpY4}
 \sum_{i = 2}^n \mathbb{E}[Y_i^4] = O(n^{-5})\sum_{\substack{1 \leq p_d < q_d \leq m\\d= 1, 2, 3, 4}}  \mathcal{M}_{\substack{ (p_d, q_d)\\ d \in [4]}}.
\end{equation}
Now  the last term in \eqref{sumofexpY4}  can be decomposed, according to the cardinality of the set of duples $\cup_{d = 1}^4 \{p_d, q_d\}$, as
%For simplicity of notations, define
%\begin{equation}
%A_{p_1, \dots, p_8}:= \mathbb{E}\left[\prod_{j = 1, 3, 5, 7} (X_{p_d, 1}X_{p_{d+1}, 1}- \rho_{p_d p_{d+1}})\right]
%\end{equation}
% for any $1\leq p_1, \dots, p_8 \leq m$ and observe that the value of $A_{p_1, \dots, p_8}$ fully depends on the choice of $p_1, \dots, p_8$.
\begin{equation} \label{sumabove}
\sum_{\substack{1 \leq p_d < q_d \leq m\\d= 1, 2, 3, 4}}  \mathcal{M}_{\substack{ (p_d, q_d)\\ d \in [4]}} =
 \sum_{k = 5}^8 \mathbb{T}(k)+ O(m^4),
\end{equation}
where for $k = 2, \dots, 8$,
\[
\mathbb{T}(k) := \sum_{\substack{1 \leq p_d < q_d \leq m\\d= 1, 2, 3, 4 \\ |\cup_{d = 1}^4 \{p_d, q_d\}| = k}}   \mathcal{M}_{\substack{ (p_d, q_d)\\ d \in [4]}}
\]
%each $B_k$, $k = 5, \dots, 8$, is defined as
%\begin{equation} \label{Bk}
%B_k := \sum_{\substack{1 \leq p_d < q_d \leq m\\d= 1, 2, 3, 4 \\ |\cup_{d = 1}^4 \{p_d, q_d\}| = k}}  \mathbb{E}\left[\prod_{d =1}^4 (X_{p_d }X_{q_d }- \rho_{p_d q_d})\right]
%\end{equation}
%and
and the $O(m^4)$ term  comes from the fact that there are only $O(m^4)$  many uniformly bounded extra summands under the restriction  $|\cup_{k = 1}^4 \{p_d, q_d\}| \leq 4$.
%
% in
%\[
%\sum_{k  =2}^4 \mathbb{T}(k) = \sum_{\substack{1 \leq p_d < q_d \leq m\\d= 1, 2, 3, 4 \\ |\cup_{d = 1}^4 \{p_d, q_d\}| \leq 4}}  \mathcal{M}_{\substack{ (p_d, q_d)\\ d \in [4]}}
%\]
% and all $ \mathcal{M}_{\substack{ (p_d, q_d)\\ d \in [4]}}$  can be uniformly bounded.
In \appref{bddforBs} we will show that
\begin{equation}\label{afterIsserlis}
 \mathbb{T}(k) = O(m^4) \|\cormat - \iden\|_F^{k - 4}
\end{equation}
for each $k = 5, \dots, 8$. Collecting \eqref{sumofexpY4}, \eqref{sumabove} and \eqref{afterIsserlis} we get \eqref{BE1}.

%By \thmref{isserlis}, it is easy to see that $ \mathbb{E}\left[\prod_{d =1}^4 (X_{p_d }X_{q_d }- \rho_{p_d q_d})\right]$ is a sum of finitely many terms each having the form
%\[
%\prod_{d = 1}^4\rho_{\pi(p_d) \pi(q_d)}
%\]
%for some $\pi \in \mathcal{S}_8$, hence on applying \lemref{bddOnSums}$(i)$ we have
%\begin{equation} \label{afterIsserlis}
% \sum_{k = 5}^8 \sum_{\substack{1 \leq p_d < q_d \leq m\\d= 1, 2, 3, 4 \\ |\cup_{d = 1}^4 \{p_d, q_d\}| = k}}  \mathbb{E}\left[\prod_{d =1}^4 (X_{p_d }X_{q_d }- \rho_{p_d q_d})\right] \lesssim \sum_{k = 5}^8O(m^4) \|R - I\|_F^{k - 4}.
%\end{equation}

To show \eqref{BE2} it suffices to understand the term
$\mathbb{E}[(\sum_{i = 1}^n \mathbb{E}[Y_i^2 |\mathcal{F}_{i-1}])^2]
$ since  the form of $\text{Var}(I)$ has been proven in \lemref{BerryEsseen}$(i)$. On expansion,
\begin{multline}\label{sumcondY2}
\sum_{i = 2}^n \mathbb{E}[Y_i^2 | \mathcal{F}_{i-1}] = \\
\frac{4}{n^4}  \sum_{i = 2}^n\sum_{\substack{1\leq p_d < q_d \leq m \\d = 1, 2 }} \mathcal{M}_{\substack{(p_d, q_d)\\ d \in [2] }}\left[ \sum_{1 \leq j, k < i}(X_{p_1j}X_{q_1j} - \rho_{p_1q_1})(X_{p_2k}X_{q_2k} - \rho_{p_2 q_2})\right].
\end{multline}
 Proceeding with our calculations,
%\begin{equation}  \label{generatefact}
%\mathbb{E}[ (X_p X_q - \rho_{pq}) (X_r X_s - \rho_{rs})] = \rho_{ps} \rho_{qr} + \rho_{pr} \rho_{qs},
%\end{equation}
%for any $1\leq p,q, r, s \leq m$. Hence,

\begin{multline} \label{ExpsumcondY22}
 \mathbb{E}\left[\left(\sum_{i = 2}^n \mathbb{E}[Y_i^2 |\mathcal{F}_{i-1}]\right)^2 \right]= \\
\frac{16}{n^8} \sum_{\substack{1\leq p_d < q_d \leq m \\d = 1, 2, 3, 4}}\Biggl\{ \mathbb{P}_1 \times
\sum_{2 \leq i, j \leq n} \sum_{\substack{1 \leq  i_1, i_2 < i \\1 \leq  i_3, i_4 < j}}\mathbb{E}\left[ \prod_{d = 1}^4 (X_{p_d i_d}X_{q_d i_d} - \rho_{p_dq_d})  \right] \Biggr\},
\end{multline}
where
\begin{align} \label{P1}
\mathbb{P}_1= \mathbb{P}_1 (p_1, q_1, \dots,p_4, q_4 ) &:= \mathcal{M}_{\substack{(p_d, q_d)\\ d \in \{1, 2\} }} \mathcal{M}_{\substack{(p_d, q_d)\\ d \in \{3, 4\} }}
\end{align}
By independence, we note that the expression
\[
\mathbb{E}\left[ \prod_{d = 1}^4 (X_{p_d i_d}X_{q_d i_d} - \rho_{p_dq_d})\right]
\]
on the right hand side of \eqref{ExpsumcondY22} can be non-zero only if  the four sample indices $i_1, \dots, i_4$  are such that either
\begin{equation} \label{1criterion}
 i_1 = \cdots = i_4,
\end{equation}
\begin{equation} \label{2criterion}
 i_1 = i_2, \quad i_3 = i_4, \quad  |\{i_1, \dots, i_4\}| = 2,
\end{equation}
\begin{equation} \label{3criterion}
 i_1 = i_3, \quad i_2 = i_4, \quad  |\{i_1, \dots, i_4\}| = 2
\end{equation}
or
 \begin{equation}\label{4criterion}
 i_1 = i_4, \quad i_2 = i_3, \quad  |\{i_1, \dots, i_4\}| = 2.
\end{equation}
For any fixed given pair $2 \leq i ,  j \leq n$, by simple counting, there are, respectively,
$ i \wedge j-1$,  $(i \wedge j-1)(i \vee j-2)$, $(i \wedge j-1)(i \wedge j-2)$, $(i \wedge j-1)(i \wedge j-2)$
combinations of $(i_1, i_2, i_3, i_4)$ that satisfy \eqref{1criterion},  \eqref{2criterion},  \eqref{3criterion},  \eqref{4criterion}  for which $1 \leq i_1, i_2 < i$ and $1 \leq i_3, i_4 < j $, where $a \vee b = \max(a, b)$  and $a \wedge b = \min(a, b)$ .
Hence,
\begin{align}
&\sum_{2 \leq i, j \leq n} \sum_{\substack{1 \leq  i_1, i_2 < i \\1 \leq  i_3, i_4 < j}}\mathbb{E}\left[ \prod_{d = 1}^4 (X_{p_d i_d}X_{q_d i_d} - \rho_{p_dq_d})  \right]
\notag\\
& = \mathcal{M}_{\substack{  (p_d, q_d)\\ d \in [4]}} \underbrace{ \left\{ \sum_{2 \leq i, j \leq n } (i \wedge j  - 1) \right\} }_{  = 6^{-1} (2n^3 - 3n^2 + n)}+
 \mathbb{P}_1\underbrace{ \sum_{2 \leq i, j \leq n}(i\wedge j-1)(i\vee j-2)}_{ = 12^{-1} (-2 n + 9 n^2 - 10 n^3 + 3 n^4)}\notag\\
& \qquad + (\mathbb{P}_2 + \mathbb{P}_3)\underbrace{\left\{ \sum_{2 \leq i, j \leq n }(i \wedge j-1)(i \wedge j-2) \right\}}_{  = 6^{-1} (n^4 - 4n^3 + 5n^2 - 2n)}\notag\\
&=\mathcal{M}_{\substack{  (p_d, q_d)\\ d \in [4]}} O(n^3) + \mathbb{P}_1 \left(\frac{n^4 }{4} + O(n^3)\right)+ \left(\mathbb{P}_2 + \mathbb{P}_3\right) O(n^4) \label{tobesub},
\end{align}
where
\begin{align*}
%\mathbb{P}_2 = \mathbb{P}_2(p_1, q_1, \dots, p_4, q_4) :=  \mathcal{M}_{\substack{  (p_d, q_d)\\ d \in \{1, 2\}}}\mathcal{M}_{\substack{  (p_d, q_d)\\ d \in \{3, 4\}}} \label{P2}\\
\mathbb{P}_2= \mathbb{P}_2(p_1, q_1, \dots, p_4, q_4)  := \mathcal{M}_{\substack{  (p_d, q_d)\\ d \in \{1, 3\}}}\mathcal{M}_{\substack{  (p_d, q_d)\\ d \in \{2, 4\}}}\\
\mathbb{P}_3= \mathbb{P}_3(p_1, q_1, \dots, p_4, q_4)   := \mathcal{M}_{\substack{  (p_d, q_d)\\ d \in \{1, 4\}}}\mathcal{M}_{\substack{  (p_d, q_d)\\ d \in \{2, 3\}}}
\end{align*}
are the value of $\mathbb{E}[ \prod_{d = 1}^4 (X_{p_d i_d}X_{q_d i_d} - \rho_{p_dq_d})] $ when $i_1, \dots, i_4$ satisfy the criteria   \eqref{3criterion} and  \eqref{4criterion} respectively.   Substituting \eqref{tobesub} into \eqref{ExpsumcondY22} gives

\begin{multline} \label{aftersub}
 \mathbb{E}\left[\left(\sum_{i = 2}^n \mathbb{E}[Y_i^2 |\mathcal{F}_{i-1}]\right)^2 \right]= \\
O(n^{-5}) \sum_{\substack{1\leq p_d < q_d \leq m \\d = 1, 2, 3, 4}} \mathbb{P}_1  + \left(\frac{4}{n^4} + O(n^{-5})\right) \sum_{\substack{1\leq p_d < q_d \leq m \\d = 1, 2, 3, 4}} \mathbb{P}_1^2+ O(n^{-4})\sum_{\substack{1\leq p_d < q_d \leq m \\d = 1, 2, 3, 4}} \sum_{u = 2}^3 \mathbb{P}_1\mathbb{P}_u ,
\end{multline}
where  the terms $\mathcal{M}_{\substack{  (p_d, q_d)\\ d \in [4]}}$ in\eqref{tobesub} are absorbed into the first $O(n^{-5})$ term  because
they are  uniformly bounded regardless of the choice of $p_1, q_1, \dots, p_4, q_4$, again by our assumptions and \thmref{isserlis}. From this it remains to show the estimates
\begin{equation} \label{firstBrace}
 \sum_{\substack{1\leq p_d < q_d \leq m \\d = 1, 2, 3, 4}}\mathbb{P}_1 =  O(m^4) \sum_{k = 0}^4 \|\cormat - \iden\|_F^k,
\end{equation}

\begin{equation}\label{secondBrace}
\sum_{\substack{1\leq p_d < q_d \leq m \\d = 1, 2, 3, 4}}\mathbb{P}_1^2 = \frac{m^4}{4}  + O(m^3) \sum_{k = 0}^8\|\cormat - \iden\|_F^k ,
% \sum_{k = 0}^3 O(m^k)\|\cormat - \iden\|_F^{2(4-k)} +  O(m^3)\|\cormat - \iden\|_F + O(m^3),
\end{equation}
and
\begin{equation} \label{thirdBrace}
\sum_{\substack{1\leq p_d < q_d \leq m \\d = 1, 2, 3, 4}} \sum_{u = 2}^3  \mathbb{P}_1\mathbb{P}_u =O(m^3) \sum_{k = 0}^8\|\cormat - \iden\|_F^k,
% \sum_{k = 0}^3 O(m^k)\|\cormat - \iden\|_F^{2(4-k)} + O(m^3)\|\cormat - \iden\|_F + O(m^3),
\end{equation}
which, together with  \lemref{BerryEsseen}$(i)$ and \eqref{aftersub}, imply \eqref{BE2}. The proofs of these  estimates will, again, be deferred to \appref{bddforBs}.

\section{The second moment bound for $II - \frac{m(m-1)}{2n}$} \label{sec:varbddsec}

We will now prove \lemref{varbdd}. Recall that $II :=\sum_{p < q } II_{pq}$ , and from the definition of $II_{pq}$ in \eqref{II_pq} we can equivalently write it as
\[
II_{pq} = {II}_{pq, 1} + II_{pq, 2},
\]
where
\begin{multline}\label{II_pq1}
II_{pq, 1}  :=    \frac{\sum_{i = 1}^n (X_{pi}X_{qi} - \rho_{pq})^2}{n^2}+
\sum_{\substack{{\boldsymbol \lambda} \in \mathbb{N}^3_{\geq 0}:  \\3 \leq |{\boldsymbol \lambda}| \leq 4  \\ \lambda_3 = 2 \\ {\boldsymbol \lambda \not = (1, 1, 2)}}} \frac{\partial^{\boldsymbol \lambda} f(1, 1, \rho_{pq})}{{\boldsymbol \lambda}!}\bar{S}_{pp}^{\lambda_1} \bar{S}_{qq}^{\lambda_2} \bar{S}_{pq}^{\lambda_3}
\end{multline}
and
\begin{multline} \label{II_pq2}
II_{pq, 2}  := \frac{\partial^{(1, 1, 2)}f(1, 1, \rho_{pq})}{1!1!2!}\bar{S}_{pp} \bar{S}_{qq} \bar{S}_{pq}^2 +
 \sum_{\substack{{\boldsymbol \lambda} \in \mathbb{N}^3_{\geq 0}:  \\1 \leq |{\boldsymbol \lambda}| \leq 4  \\ \lambda_3 \not= 2}} \frac{\partial^{\boldsymbol \lambda} f(1, 1, \rho_{pq})}{{\boldsymbol \lambda}!}\bar{S}_{pp}^{\lambda_1} \bar{S}_{qq}^{\lambda_2} \bar{S}_{pq}^{\lambda_3} .
%\\+  \ \ \sum_{p < q }\frac{\partial^{(1, 1, 2)} f(1, 1, \rho_{pq})}{1!1!2!}(S_{pp} - 1) (S_{qq} - 1)(S_{pq} - \rho_{pq})^2.
\end{multline}
%by grouping terms based on whether the third derivative index $\lambda_3$ equal to $2$ or not.
We form this grouping of terms for reasons that will be explained later.
As such, by defining $II_1 := \sum_{p < q}II_{pq, 1}$ and  $II_2 := \sum_{p < q}II_{pq, 2}$, one can write
\[
II =II_1 +II_2.
\]
To finish the proof of \lemref{varbdd}, it suffices to bound  the second moments of $II_1- \frac{m(m-1)}{2n}$  and $II_2$ respectively in terms of $\|\cormat- \iden\|_F$.

% to estabish since for all $II_{pq, 2}$, $1 \leq p<q \leq m $, the coefficients $\partial^{\boldsymbol \lambda} f(1, 1, \rho_{pq})$ in \eqref{II_pq} are of the form  $C \rho_{pq}$ or $C \rho_{pq}^2$  for some constant $C \not = 0$, as suggested by \lemref{df}.

\begin{lemma}[Bound on the second moment of $ II_1 - \frac{m(m-1)}{2n}$] \label{lem:II_1bdd}
%\begin{multline*}
%\mathbb{E}\left[\left(II_1- \frac{m(m-1)}{2n}\right)^2\right] \lesssim\\
%\frac{\|\cormat  - \iden\|^4_F }{n^2} + \frac{m^2}{n^3}\left(1 + \|\cormat  - \iden\|_F^2\right)  +
% \frac{m^4}{n^5} +  \left(\frac{m^3}{n^4} + \frac{1}{n^2}\right)\|\cormat  - \iden\|_F
%\end{multline*}
\begin{equation*}
\mathbb{E}\left[\left(II_1- \frac{m(m-1)}{2n}\right)^2\right] \lesssim
%\frac{\|\cormat  - \iden\|^4_F }{n^2} +
o\left(\frac{m^{2 (1 - \gamma)}}{n^2}\right) \sum_{k = 0}^4 \|\cormat - I\|_F^k
\end{equation*}
for any $0 < \gamma < 1/2$.
\end{lemma}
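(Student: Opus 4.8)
The plan is to expand $II_1 - \frac{m(m-1)}{2n}$ into a sum of terms indexed by pairs $(p,q)$ and by multi-indices $\boldsymbol\lambda$, and then to bound the second moment of each such aggregate by the general strategy of decomposing sums according to the cardinality of the index set $\cup_d\{p_d,q_d\}$, exactly as was done in \secref{BEsec}. First I would observe that the leading piece $\frac{1}{n^2}\sum_{p<q}\sum_{i=1}^n(X_{pi}X_{qi}-\rho_{pq})^2$ has expectation $\frac{1}{n^2}\sum_{p<q}\mathbb{E}[(X_pX_q-\rho_{pq})^2] = \frac{m(m-1)}{2n} + O(n^{-1}\|\cormat-\iden\|_F^2)$ (since $\mathbb{E}[(X_pX_q-\rho_{pq})^2] = 1 + \rho_{pq}^2$ under our normality and standardization assumptions), so that subtracting $\frac{m(m-1)}{2n}$ leaves a centered version of this diagonal sum plus a contribution of order $n^{-1}\|\cormat-\iden\|_F^2$ which is harmless. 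Then the quantity $II_1 - \frac{m(m-1)}{2n}$ is a sum of terms each of which is $n^{-2}$ (for the diagonal part) or a product of centered sample covariances $\bar S_{pp}^{\lambda_1}\bar S_{qq}^{\lambda_2}\bar S_{pq}^{\lambda_3}$ with $|\boldsymbol\lambda|\ge 3$ and $\lambda_3=2$, carrying \emph{constant} coefficients $\partial^{\boldsymbol\lambda}f(1,1,\rho_{pq})/\boldsymbol\lambda!$ — this is precisely why the grouping \eqref{II_pq1}--\eqref{II_pq2} was chosen, as constant coefficients make Isserlis's theorem (\thmref{isserlis}) directly applicable.

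Next I would bound $\mathbb{E}[(II_1-\frac{m(m-1)}{2n})^2]$ by Cauchy--Schwarz over the finitely many multi-index types, reducing to bounding $\mathbb{E}[(\sum_{p<q} c_{pq}^{\boldsymbol\lambda}\bar S_{pp}^{\lambda_1}\bar S_{qq}^{\lambda_2}\bar S_{pq}^2)^2]$ for each fixed $\boldsymbol\lambda$ with $|\boldsymbol\lambda|\in\{3,4\}$, $\lambda_3=2$, plus the centered-diagonal term. Each such square expands into $\sum_{(p_1,q_1),(p_2,q_2)} c_{p_1q_1}^{\boldsymbol\lambda}c_{p_2q_2}^{\boldsymbol\lambda}\,\mathbb{E}[\bar S_{p_1p_1}^{\lambda_1}\bar S_{q_1q_1}^{\lambda_2}\bar S_{p_1q_1}^2\bar S_{p_2p_2}^{\lambda_1}\bar S_{q_2q_2}^{\lambda_2}\bar S_{p_2q_2}^2]$, and since each $\bar S_{pq} = n^{-1}\sum_i(X_{pi}X_{qi}-\rho_{pq})$ is $O(n^{-1/2})$ in an $L^k$ sense, a term with total $\bar S$-degree $2|\boldsymbol\lambda|$ contributes a factor $n^{-|\boldsymbol\lambda|}$; crucially, expectations of products of $\bar S$'s vanish unless the index pairs overlap enough to ``connect'' the sample indices, and this overlap structure is exactly what gets captured by splitting the double sum over $(p_1,q_1),(p_2,q_2)$ according to $|\cup\{p_d,q_d\}|\in\{2,3,4\}$ and counting how many of those index pairs are forced to coincide. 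Whenever the two pairs are disjoint ($|\cup|=4$) the coefficient structure $\rho_{p_dq_d}$ inside $c^{\boldsymbol\lambda}_{p_dq_d}$ forces extra powers of $\rho$, giving $\|\cormat-\iden\|_F$ factors; the dominant combinatorial term (the ``variance-like'' term with $|\cup|=2$) has $O(m^2)$ summands each of size $n^{-|\boldsymbol\lambda|}\le n^{-3}$, yielding $O(m^2 n^{-3}) = o(m^{2(1-\gamma)}/n^2)$ when $m/n\le\kappa$ and $\gamma<1/2$. For the centered diagonal term, the martingale/independence-across-$i$ structure gives it variance $O(m^2 n^{-3})$ by a similar (simpler) count.

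The main obstacle I anticipate is bookkeeping, not conceptual difficulty: one must carefully enumerate all multi-indices $\boldsymbol\lambda$ with $3\le|\boldsymbol\lambda|\le 4$, $\lambda_3=2$, $\boldsymbol\lambda\ne(1,1,2)$ — namely $(1,0,2)$, $(0,1,2)$, $(2,0,2)$, $(0,2,2)$ — compute the constant coefficients $\partial^{\boldsymbol\lambda}f(1,1,\rho_{pq})$ via \lemref{df} (these are polynomials in $\rho_{pq}$, so bounded), and then for each \emph{pair} of multi-indices track exactly which index-overlap patterns produce the worst power of $m$ versus $n$ and how many powers of $\|\cormat-\iden\|_F$ come along. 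The delicate accounting is showing that every surviving configuration is either (a) $O(m^2 n^{-3})\sum_{k=0}^{4}\|\cormat-\iden\|_F^{k}$, absorbed into $o(m^{2(1-\gamma)}/n^2)\sum_{k=0}^{4}\|\cormat-\iden\|_F^k$ because $n^{-3}m^2 = (m/n)\cdot m n^{-2}\cdot(m/n)^{0}\lesssim m^{1}n^{-2}\ll m^{2(1-\gamma)}n^{-2}$, or (b) has a higher power of $n^{-1}$ that more than compensates for any gain in $m$ or in $\|\cormat-\iden\|_F$. Once the cardinality-based decomposition is set up, each individual estimate is a routine (if tedious) application of \thmref{isserlis} together with the crude bound $|\rho_{pq}|\le 1$ and $\sum_{p\ne q}\rho_{pq}^2 = \|\cormat-\iden\|_F^2$; I would relegate the full enumeration to an appendix and state here only the reduction to the four multi-index cases plus the centered-diagonal term, as the excerpt already does for its other lemmas.
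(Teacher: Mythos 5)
Your overall architecture (split by multi-index type, expand the square, apply Isserlis, decompose by the cardinality of $\cup_d\{p_d,q_d\}$) founders on one claim that is false for $II_1$, and this is not mere bookkeeping. By \lemref{df}, when $\lambda_3=2$ the derivative $\partial^{\boldsymbol\lambda}f(1,1,\rho_{pq})$ does not depend on $\rho_{pq}$ at all: the coefficients of the terms $\bar{S}_{pp}^{\lambda_1}\bar{S}_{qq}^{\lambda_2}\bar{S}_{pq}^{2}$ in \eqref{II_pq1} are absolute constants ($-1$ for $(1,0,2),(0,1,2)$ and $+1$ for $(2,0,2),(0,2,2)$); that is exactly the feature that separates $II_1$ from $II_2$, so your step ``whenever the two pairs are disjoint the coefficient structure $\rho_{p_dq_d}$ forces extra powers of $\rho$'' has nothing to stand on. Worse, your Cauchy--Schwarz over multi-index types forces you to bound, e.g., $\mathbb{E}\bigl[\bigl(\sum_{p<q}\bar{S}_{pp}\bar{S}_{pq}^2\bigr)^2\bigr]$ on its own; but $\mathbb{E}[\bar{S}_{pp}\bar{S}_{pq}^2]=(2+6\rho_{pq}^2)/n^2$, so this uncentered aggregate has mean of order $m^2/n^2$ and hence second moment at least of order $m^4/n^4$. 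On the relevant stripe with $m\asymp n$ and $\|\cormat-\iden\|_F\asymp 1$, that is of constant order, while the claimed bound $o(m^{2(1-\gamma)}/n^2)\sum_{k\le 4}\|\cormat-\iden\|_F^k$ is $o(n^{-2\gamma})\to 0$, so the route cannot close. You centered only the diagonal piece; the $|\boldsymbol\lambda|=3,4$ pieces also carry non-negligible means, and the $O(n^{-2})$ constants in those means cancel \emph{across} types (the third-order group $(1,0,2),(0,1,2)$ against the fourth-order group $(2,0,2),(0,2,2)$), a cancellation your type-by-type split destroys.

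The paper's proof is organized precisely to preserve that cancellation: it uses $\mathbb{E}[(II_1-\frac{m(m-1)}{2n})^2]\le 2\{\mathrm{Var}[II_1]+(\mathbb{E}[II_1]-\frac{m(m-1)}{2n})^2\}$, represents $II_1$ as a single degree-$4$ U-statistic, computes $\mathbb{E}[II_1]$ exactly via \thmref{isserlis} (obtaining $\frac{m(m-1)}{2n}+O(n^{-1})\|\cormat-\iden\|_F^2+O(m^2/n^3)$, where the $n^{-2}$-order constants visibly cancel), and then bounds $\mathrm{Var}[II_1]$ through Hoeffding's decomposition $\mathrm{Var}[II_1]\lesssim\sum_{c=1}^4 n^{-c}\zeta_c$; the extra $n^{-c}$ factors are what render the unavoidable $m^4/n^4$-sized pieces of the projections harmless (they enter as $m^4/n^5$ or smaller). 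If you wish to avoid U-statistics, you would have to center \emph{every} multi-index piece and compute all the means to order $n^{-2}$ by Isserlis before squaring, and control covariances rather than raw second moments --- at which point you are redoing essentially the paper's computation. As written, your proposal neither centers those pieces nor records the cross-type cancellation, so it has a genuine gap rather than just tedium.
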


\begin{lemma}[Bound on the second moment of $ II_2 $] \label{lem:II_2bdd}
 \begin{equation}\label{bddinII_2bdd}
\mathbb{E} \left[\left(II_2 \right)^2\right] \lesssim  \frac{\|\cormat  - \iden\|_F^2 + \|\cormat  - \iden\|_F^4}{n}
%+ \frac{m \|R  - I\|_F^{3/2}}{n^2}
%+ \frac{m^2 \|\cormat  - \iden\|_F^2}{n^3}
+ o\left(\frac{m^{2 (1 - \gamma)}}{n^2}\right) \sum_{k = 0}^2 \|\cormat - I\|_F^k
\end{equation}
for any $0 < \gamma < 1/2$.
\end{lemma}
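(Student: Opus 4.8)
My plan is to read off $II_{pq,2}$ explicitly from \lemref{df} and isolate its single ``slow'' summand. Since $f(u_1,u_2,u_3)=u_1^{-1}u_2^{-1}u_3^2$, every derivative $\partial^{\boldsymbol\lambda}f(1,1,\rho_{pq})$ equals a nonzero constant (depending only on $\lambda_1,\lambda_2$) times $\partial_{u_3}^{\lambda_3}(u_3^2)\big|_{u_3=\rho_{pq}}$, which vanishes for $\lambda_3\ge3$, equals $2\rho_{pq}$ for $\lambda_3=1$, and equals $\rho_{pq}^2$ for $\lambda_3=0$. Hence, in \eqref{II_pq2} the only non-vanishing monomial $\bar S_{pp}^{\lambda_1}\bar S_{qq}^{\lambda_2}\bar S_{pq}^{\lambda_3}$ whose coefficient is not a multiple of $\rho_{pq}$ and which contains no diagonal factor $\bar S_{pp}$ or $\bar S_{qq}$ is the one with $\boldsymbol\lambda=(0,0,1)$, namely $2\rho_{pq}\bar S_{pq}$; every other surviving monomial is either $\rho_{pq}^2\,\bar S_{pp}^{\lambda_1}\bar S_{qq}^{\lambda_2}$ with $\lambda_1+\lambda_2\ge1$, or $\rho_{pq}\,\bar S_{pp}^{\lambda_1}\bar S_{qq}^{\lambda_2}\bar S_{pq}$ with $\lambda_1+\lambda_2\ge1$, or the separately written term $\bar S_{pp}\bar S_{qq}\bar S_{pq}^2$, so in each of these a factor $\bar S_{pp}$ or $\bar S_{qq}$ is present. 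I would therefore write $II_2=2\sum_{p<q}\rho_{pq}\bar S_{pq}+\sum_{p<q}R_{pq}$ and, by Minkowski's inequality, bound the two pieces in $L^2$ separately.

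For the leading piece, note that $2\sum_{p<q}\rho_{pq}\bar S_{pq}=\tfrac2n\sum_{i=1}^n W_i$, where $W_i:=\sum_{p<q}\rho_{pq}(X_{pi}X_{qi}-\rho_{pq})=\tfrac12 X_{(i)}^{\top}(\cormat-\iden)X_{(i)}-\tfrac12\|\cormat-\iden\|_F^2$ with $X_{(i)}:=(X_{1i},\dots,X_{mi})^{\top}\sim N(0,\cormat)$. Since the $W_i$ are i.i.d. and centered, this piece has second moment $\tfrac4n\mathrm{Var}(W_1)=\tfrac2n\,\mathrm{tr}\big((\cormat-\iden)\cormat(\cormat-\iden)\cormat\big)$ by the variance formula for Gaussian quadratic forms. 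Expanding $\cormat=\iden+(\cormat-\iden)$ gives $\mathrm{tr}\big((\cormat-\iden)\cormat(\cormat-\iden)\cormat\big)=\|\cormat-\iden\|_F^2+2\,\mathrm{tr}\big((\cormat-\iden)^3\big)+\|(\cormat-\iden)^2\|_F^2$, and since $|\mathrm{tr}((\cormat-\iden)^3)|\le\|\cormat-\iden\|_F^3$ and $\|(\cormat-\iden)^2\|_F^2\le\|\cormat-\iden\|_F^4$, the leading piece is $\lesssim(\|\cormat-\iden\|_F^2+\|\cormat-\iden\|_F^4)/n$, which is exactly the first term in \eqref{bddinII_2bdd}. (This is also the only term of $II_2$ that genuinely needs the $1/n$-budget; on the ``stripe'' $\Theta(b,B)$ it is itself of order $o(m^{2(1-\gamma)}/n^2)$, but for general $\cormat$ with large $\|\cormat-\iden\|_F$ it must be kept.)

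For $\sum_{p<q}R_{pq}$, Minkowski again reduces matters to bounding $\mathbb{E}\big[\big(\sum_{p<q}\rho_{pq}^{a}\bar S_{pp}^{\lambda_1}\bar S_{qq}^{\lambda_2}\bar S_{pq}^{\lambda_3}\big)^2\big]$ for the finitely many monomial types in $R_{pq}$, all of which have $\lambda_1+\lambda_2\ge1$. Here I would use the same machinery as for $\mathbb{S}(k)$, $\mathbb{T}(k)$ and \lemref{II_1bdd}: expand the square as a double sum over $(p,q)$ and $(p',q')$, apply Isserlis's theorem (\thmref{isserlis}) to each $\mathbb{E}\big[\bar S_{pp}^{\lambda_1}\bar S_{qq}^{\lambda_2}\bar S_{pq}^{\lambda_3}\bar S_{p'p'}^{\lambda_1}\bar S_{q'q'}^{\lambda_2}\bar S_{p'q'}^{\lambda_3}\big]$ to write it as a polynomial in the $\rho$'s whose coefficients are explicit powers of $1/n$ obtained by counting the non-vanishing sample-index contractions, and then split the double sum according to the cardinality $k:=|\{p,q\}\cup\{p',q'\}|\in\{2,3,4\}$. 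For each $k$ the number of index configurations is $O(m^k)$, the factor $\rho_{pq}^{a}$ (with $a\ge1$ for the $\rho$-carrying types) and the diagonal fluctuations let one invoke the elementary bounds $\sum_{q}\rho_{pq}^2\le\|\cormat-\iden\|_F^2$ and $\sum_{p<q}\rho_{pq}^{2j}\le\|\cormat-\iden\|_F^2$, and there remain enough surplus powers of $1/n$ (aided, where convenient, by $m/n\le\kappa$) that every such contribution is $o\big(m^{2(1-\gamma)}/n^2\big)\sum_{k=0}^2\|\cormat-\iden\|_F^k$ or smaller, the $\rho$-free term $\bar S_{pp}\bar S_{qq}\bar S_{pq}^2$ being absorbed by its high fluctuation degree alone. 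Combining the two pieces yields \eqref{bddinII_2bdd}. I expect this last step to be the real work: the case-by-case accounting over the dozen-odd monomial types and the three overlap patterns, keeping the exact power of $n$ coming from the Wick contractions synchronized with the degree and combinatorial shape of the accompanying $\rho$-sums so that nothing escapes the stated budget — voluminous but routine in spirit, and (as the authors remark for the companion estimates) a natural candidate for \texttt{mathematica}-assisted bookkeeping.
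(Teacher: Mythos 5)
Your proposal is correct and follows essentially the same route as the paper: decompose $II_2$ into its finitely many monomial types, bound each type's second moment separately (the paper uses $2|ab|\le a^2+b^2$ where you invoke Minkowski), and handle the non-leading types by Isserlis expansion, counting the non-vanishing Wick contractions to extract the powers of $n^{-1}$, and splitting the double sum by the cardinality of $\{p,q\}\cup\{p',q'\}$ --- exactly the machinery of \lemref{prodOrderLemma}, Corollary~\ref{cor:momentfact} and \lemref{threebounds}. The one local difference is your treatment of the dominant $(0,0,1)$ term via the Gaussian quadratic-form identity $\mathrm{Var}\bigl(X^{\top}(\cormat-\iden)X\bigr)=2\,\mathrm{tr}\bigl((\cormat-\iden)\cormat(\cormat-\iden)\cormat\bigr)$ followed by matrix-norm inequalities, a slightly cleaner derivation of the same $(\|\cormat-\iden\|_F^2+\|\cormat-\iden\|_F^4)/n$ bound that the paper obtains from Isserlis plus \lemref{threebounds}.
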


 Using \lemsref{II_1bdd} and \lemssref{II_2bdd}, \lemref{varbdd} immediately follows from \begin{inparaenum} \item $II^2  = (II_1 - \frac{m(m-1)}{2})^2 +  II_2^2  + 2   (II_1 - \frac{m(m-1)}{2})II_2$ and  \item $ 2| (II_1 - \frac{m(m-1)}{2}) II_2| \leq  (II_1 - \frac{m(m-1)}{2})^2 +  II_2^2 $\end{inparaenum} .

%With \lemref{df}, we explicitly write
%\begin{align}
%&II_2 \\leq 2 \rho_{pq}\Bigl\{ (S_{pq} - \rho_{pq}) - (S_{pp} - 1 )(S_{pq} - 1 )- (S_{qq} - 1 )(S_{pq} - 1 )\Bigr\}\\
%&- \rho_{pq}^2 \Bigl\{ (S_{pp} - 1)+ (S_{qq} - 1) + ( S_{pp}- 1)^2 +  ( S_{pp}- 1)^2 + (S_{qq}- 1)(S_{qq}- 1)\Bigr\} +
%\end{align}

For each pair $p < q$, the main difference between $II_{pq, 1}$  and $II_{pq, 2}$ is that when $\lambda_3 \not = 2$,  all the coefficients $\frac{\partial^{\boldsymbol \lambda}f(1, 1, \rho_{pq})}{{\boldsymbol \lambda}!}$ appearing in the second term of  \eqref{II_pq2} can be bounded by either $|\rho_{pq}|$ or $\rho_{pq}^2$ up to some multiplicative constants. This makes proving the useful bound for $\mathbb{E}[II_2^2]$  in terms of the norm $\|\cormat - \iden\|_F$ amenable to the straightforward approach of squaring and taking expectation.
%a less laborious task than doing so for $\mathbb{E}[(II_1 - \frac{m(m-1)}{2n})^2]$.
Thus we shall defer the proof of \lemref{II_2bdd} to  \appref{ProofvarBdd} and address the bound in \lemref{II_1bdd} for the rest of this section.
%
%  It turns out because of this, with the help of a few technical lemmas in the appendix, a  useful bound on $II_2^2$ is comparatively easier to derive ... , and we hereby state it in the following lemma.

We will start with the fact that
\begin{equation} \label{bddbytwopieces}
\mathbb{E}\left[\left(II_1 - \frac{m(m-1)}{2n}\right)^2\right] \leq 2 \left\{\text{Var}[II_1] + \left(\mathbb{E}[II_1] -  \frac{m(m-1)}{2n}\right)^2\right\}
\end{equation}
and form estimates for the terms on the right hand side.
% Recognising that the triples ${\boldsymbol \lambda} = (\lambda_1, \lambda_2, \lambda_3) $ indexing the second sum of \eqref{II_pq1} are
%$(1, 0, 2)$, $(0, 1, 2)$, $(2, 0, 2)$ and $(0, 2, 2)$,
% by \lemref{df} we explicityly write $II_{1, pq}$ as
%\begin{align}
%II_{1, pq} =&\ \ n^{-2}\sum_{i = 1}^n (X_{pi}X_{qi} - \rho_{pq})^2
%-   (S_{pp} - 1)(S_{pq} - \rho_{pq})^2-   (S_{qq} - 1)(S_{pq} - \rho_{pq})^2 \notag\\
%&\hspace{3 cm} +    (S_{pp} - 1)^2(S_{pq} - \rho_{pq})^2 +  (S_{qq} - 1)^2(S_{pq} - \rho_{pq})^2 \label{term1}\notag\\
%= & \ \ n^{-2}\sum_{i = 1}^n (X_{pi}X_{qi} - \rho_{pq})^2 \\
%& - n^{-3}\sum_{i, j, k = 1}^n  (X^2_{pi} - 1)(X_{pj} X_{qj} - \rho_{pq})(X_{pk} X_{qk} - \rho_{pq})  \label{term2}\\
%&- n^{-3}\sum_{i, j, k = 1}^n  (X^2_{qi} - 1)(X_{pj} X_{qj} - \rho_{pq})(X_{pk} X_{qk} - \rho_{pq})  \\
%&+ n^{-4}\sum_{i, j, k, l = 1}^n  (X^2_{pi} - 1)(X^2_{pj} - 1)(X_{pk} X_{qk} - \rho_{pq})(X_{pl} X_{ql} - \rho_{pq}) \\
%&+ n^{-4}\sum_{i, j, k, l = 1}^n  (X^2_{qi} - 1)(X^2_{qj} - 1)(X_{pk} X_{qk} - \rho_{pq})(X_{pl} X_{ql} - \rho_{pq})  \label{term5}
%\end{align}
To understand the mean and variance of $II_1$, it is more instructive to first recognize that each term in \eqref{II_pq1} can be written as a U-statistic of degree $4$.
%To understand the  mean and variance of $II_1$, it is more instructive to first recognize that each of the terms from \eqref{term1} to \eqref{term5} can in fact be written as a U-statistic of degree $4$.
For instance, for any four distinct indices $1 \leq i, j, k, l \leq n$, if we only treat ${\bf X}_{pq, i} = (X_{pi}, X_{qi})', \dots, {\bf X}_{pq, l} = (X_{pl}, X_{ql})'$ as a four tuple in $\mathbb{R}^2$, the function
\begin{equation} \label{h1pq}
h_{1, pq}({\bf X}_{pq, i},{\bf X}_{pq, j}, {\bf X}_{pq, k}, {\bf X}_{pq, l}):= \frac{{n \choose 4}}{n^{2}{n-1 \choose 3} }\sum_{i' \in \{i, j, k, l\}}\left\{ (X_{p i'}X_{q i'} - \rho_{pq})^2 \right\},
\end{equation}
is symmetric in its four arguments, and
the first term in \eqref{II_pq1} can be written as the U-statistic
\begin{equation} \label{UstatRepresentation1}
n^{-2}\sum_{i = 1}^n (X_{pi}X_{qi} - \rho_{pq})^2   = \displaystyle{n \choose 4}^{-1} \sum h_{1, pq}({\bf X}_{pq, i}, {\bf X}_{pq, j}, {\bf X}_{pq, k}, {\bf X}_{pq, l})
\end{equation}
where the summation on the right hand side is over all distinct unordered qradruples $i, j, k , l$ that can be formed from $[n]$.
We note that the factor $n-1 \choose 3$ appears as a denominator in \eqref{h1pq} because for each $i \in \{1, \dots, n\}$, the summand $(X_{pi} X_{qi} - \rho_{pq})^2$ will appear only once on the left hand side of \eqref{UstatRepresentation1}, while by the definition of $h_{1, pq}$ it will appear in $n-1 \choose 3$ kernels that are summed over on the right hand side of  \eqref{UstatRepresentation1} (Since for each $i$, there will be $n-1 \choose 3$ choices of $j, k, l$ to form a quadruple $(i, j, k, l)$ from $\{1, \dots, n\}$). Thus,  the factor $n-1 \choose 3$ appears as a denominator in definition \eqref{h1pq} to account for the multiple counting.

Note that the other terms of the form $\frac{\partial^{\boldsymbol \lambda} f(1, 1, \rho_{pq})}{{\boldsymbol \lambda}!} \bar{S}_{pp}^{\lambda_1}\bar{S}_{qq}^{\lambda_2}\bar{S}_{pq}^{\lambda_3}$ in \eqref{II_pq1} are indexed by ${\boldsymbol \lambda}$ equal to $(1, 0, 2)$, $(0, 1, 2)$, $(2, 0, 2)$, $(0, 2, 2)$. These terms can be represented as U-statistics of degree $4$ using a similar strategy: With four \emph{distinct} indices $i, j, k, l$ from $[n]$,
by defining the symmetric kernel function
\begin{align} \label{h2pq}
&h_{2, pq}({\bf X}_{pq, i},{\bf X}_{pq, j}, {\bf X}_{pq, k}, {\bf X}_{pq, l})\\
&:= \underbrace{{n \choose 4} \frac{n^{-3}}{ {n -3 \choose 1}}}_{O(1)}\sum_{\substack{\{i', j', k'\} \\\subset \{i, j, k, l\}\\ i', j', k' \text{ distinct } \\\text{ and unordered}}}
\sum_{\pi \in \mathcal{S}_3}
 \Biggl\{(X_{p\pi(i')}^2 - 1)(X_{p\pi(j')}X_{q\pi(j')} - \rho_{pq})(X_{p\pi(k')}X_{q\pi(k')} - \rho_{pq}) \Biggl\}\notag\\
 &+ \underbrace{{n \choose 4}\frac{ n^{-3}}{{ n - 2\choose 2}}}_{O(n^{-1})}  \sum_{\substack{\{i', j'\} \\\subset \{i, j, k, l\}\\i', j' \text{ distinct } \\ \text{ and unordered}}} \sum_{\pi \in \mathcal{S}_2}\Biggl\{ (X_{p \pi( i')}^2 - 1)(  X_{p \pi(j')}X_{q\pi( j')} - \rho_{pq})^2\notag \\
&\hspace{4cm} + 2 (X_{p \pi( i')}^2 - 1) (X_{p \pi(i')}X_{q\pi( i')} - \rho_{pq})(X_{p \pi(j')}X_{q\pi(j' )} - \rho_{pq})\Biggr\}  \notag\\
&+ \underbrace{ {n \choose 4}\frac{n^{-3}}{ {n -1 \choose 3}} }_{O(n^{-2})} \sum_{i' \in \{i, j, k, l\}}  \Biggl\{ (X_{p i'}^2 - 1) (X_{p i'}X_{q i'} - \rho_{pq})^2\Biggl\}   \notag,
\end{align}
for ${\boldsymbol \lambda} = (1, 0, 2)$, where above we interpret $\pi$ as permutation functions on  distinct elements,  we have the  U-statistic  representation of degree $4$
\begin{align}
&\frac{\partial^{(1, 0, 2)}f(1, 1, \rho_{pq})}{(1, 0, 2)!}\bar{S}_{pp} \bar{S}_{pq}^2 \notag\\
 &= - n^{-3}\sum_{\tilde{i}, \tilde{j}, \tilde{k} = 1}^n  (X^2_{p\tilde{i}} - 1)(X_{p\tilde{j}} X_{q\tilde{j}} - \rho_{pq})(X_{p\tilde{k}} X_{q\tilde{k}} - \rho_{pq}) \label{line2}\\
&= - {n \choose 4}^{-1} \sum_{\substack{\text{unordered}\\  \& \text{ distinct } \\ i, j, k , l \\ \text{ from  } [n]}} h_{2, pq} ({\bf X}_{pq, i},{\bf X}_{pq, j}, {\bf X}_{pq, k}, {\bf X}_{pq, l}) \notag.
\end{align}
Note that \eqref{line2} simply comes from  \lemref{df}. What we have done here is that, for each term $  (X^2_{p\tilde{i}} - 1)(X_{p\tilde{j}} X_{q\tilde{j}} - \rho_{pq})(X_{p\tilde{k}} X_{q\tilde{k}} - \rho_{pq})$ in \eqref{line2}  with $\tilde{i}, \tilde{j}, \tilde{k}$ not necessarily distinct, we find any $4$ \emph{distinct} indices $i, j, k, l$ that contain $\tilde{i}, \tilde{j}, \tilde{k}$ as sets, and arrange the term into one of the three summands of order $O(1)$, $O(n^{-1})$ and $O(n^{-2)}$  in \eqref{h2pq} according to the actual set cardinality $|\{\tilde{i}, \tilde{j}, \tilde{k}\}|$, which can be equal to $1$, $2$ or $3$. Since there are ${ n - |\{\tilde{i}, \tilde{j}, \tilde{k}\}|\choose4-  |\{\tilde{i}, \tilde{j}, \tilde{k}\}|}$ choices of distinct $i, j, k, l$ that contain $\{\tilde{i}, \tilde{j}, \tilde{k}\}$ as sets, to account for the duplications we put the factors $n -3 \choose 1$, $n -2 \choose 2$, $n -1 \choose 3$ as denominators for the three summands in the definition \eqref{h2pq} of the kernel.
By a simple symmetry argument if we define the kernel
\begin{align}\label{h2barpq}
\bar{h}_{2, pq}({\bf X}_{pq, i}, {\bf X}_{pq, j}, {\bf X}_{pq, k}, {\bf X}_{pq, l}) := h_{2, pq}(\bar{\bf X}_{pq, i}, \bar{\bf X}_{pq, j}, \bar{\bf X}_{pq, k}, \bar{\bf X}_{pq, l})
\end{align}
where $\bar{\bf X}_{pq, i} := (X_{qi}, X_{pi})'$,
we have
\begin{align}
&\frac{\partial^{(0, 1, 2)}f(1, 1, \rho_{pq})}{(0, 1, 2)!}\bar{S}_{qq} \bar{S}_{pq}^2 \notag\\
&= - {n \choose 4}^{-1} \sum_{\substack{\text{unordered}\\  \& \text{ distinct } \\ i, j, k , l \\ \text{ from  } [n]}} \bar{h}_{2, pq} ({\bf X}_{pq, i},{\bf X}_{pq, j}, {\bf X}_{pq, k}, {\bf X}_{pq, l}) \notag.
\end{align}
In the same vein, for ${\boldsymbol \lambda}$ equals $(2, 0, 2)$ or $(0, 2, 2)$ and four \emph{distinct} indices $i, j, k, l$ from $[n]$, we leave it to the reader to check that one can define a symmetric kernel $h_{3, pq}$ of degree $4$ as shown in \appref{ProofvarBdd} such that
\[
\frac{\partial^{(2, 0, 2)}f(1, 1, \rho_{pq})}{(2, 0, 2)!}\bar{S}_{pp}^2   \bar{S}_{pq}^2
= {n \choose 4}^{-1}   h_{3, pq}({\bf X}_{ pq, i}, {\bf X}_{ pq, j}, {\bf X}_{ pq, k}, {\bf X}_{ pq, l})
\]
and
\[
\frac{\partial^{(0, 2, 2)}f(1, 1, \rho_{pq})}{(0, 2, 2)!}\bar{S}_{qq}^2 \bar{S}_{pq}^2
={n \choose 4}^{-1} \bar{h}_{3, pq}({\bf X}_{ pq, i}, {\bf X}_{ pq, j}, {\bf X}_{ pq, k}, {\bf X}_{ pq, l}),
\]
where
\begin{align}\label{h3barpq}
\bar{h}_{3, pq}({\bf X}_{pq, i}, {\bf X}_{pq, j}, {\bf X}_{pq, k}, {\bf X}_{pq, l}) := h_{3, pq}(\bar{\bf X}_{pq, i}, \bar{\bf X}_{pq, j}, \bar{\bf X}_{pq, k}, \bar{\bf X}_{pq, l}).
\end{align}
Letting ${\bf X}_i = (X_{1i}, \dots, X_{mi})'$ denote the entire $i$-th sample, we have  the degree-$4$ U-statistic representation for $II_{1}$:
\begin{equation} \label{UstatII2}
II_{1} = {n \choose 4}^{-1}  \sum_{ 1\leq i < j < k < l \leq n  }h({\bf X}_i, {\bf X}_j,{\bf X}_k, {\bf X}_l),
\end{equation}
where
\begin{multline}\label{kernelh}
h({\bf X}_i, {\bf X}_j,{\bf X}_k, {\bf X}_l) :=\\
 \sum_{1 \leq p < q \leq m} (h_{1, pq} - h_{2, pq} - \bar{h}_{2, pq} + h_{3, pq} + \bar{h}_{3, pq} )({\bf X}_{pq, i},{\bf X}_{pq, j}, {\bf X}_{pq, k}, {\bf X}_{pq, l}).
\end{multline}
Hence,
\begin{multline*}
\mathbb{E}[II_1] =\\
 \sum_{1 \leq p < q \leq m}  \mathbb{E}\left[(h_{1, pq} - h_{2, pq} - \bar{h}_{2, pq} + h_{3, pq} + \bar{h}_{3, pq} )({\bf X}_{pq, 1}, {\bf X}_{pq, 2}, {\bf X}_{pq, 3}, {\bf X}_{pq, 4})\right].
\end{multline*}
The expectation for each of $h_{1, pq}(\cdot), h_{2, pq}(\cdot) , h_{3, pq}(\cdot)$  in the preceding display can be computed by taking expectation for each of the product terms appearing in $\{\cdot\}$ in definitions  \eqref{h1pq}, \eqref{h2pq} as well as the counterparts in the definition of $h_{3, pq}$ in \appref{ProofvarBdd} (Note that quite a few of these expectations are simply zero due to independence of samples). Exploiting symmetry the same can be done  for  \eqref{h2barpq} and  \eqref{h3barpq}.  In principle, these higher-order normal moments can all be obtained by repeatedly applying Isserlis's theorem (\thmref{isserlis}) laboriously. With symbolic computational softwares such as \texttt{mathematica} they can however be much more effortlessly computed. These computations lead to
%
%\begin{lemma} [Expectation of $II_1$] \label{lem:expII1}
\begin{multline} \label{bddexpII1}
\mathbb{E}[II_1]
%&= \sum_{p < q }{n \choose 4} \left \{\frac{4}{{n-1 \choose 3}n^2}  (1 + \rho_{pq}^2)  - \frac{16}{n^3  {n-1 \choose 3}} (1 + 3 \rho_{pq}^2) +  \frac{48}{ n^4 {n-2 \choose 2}} (1+ 5\rho_{pq}^2)+  \frac{80}{{n-1 \choose 3}n^4}  (1 +  5 \rho_{pq}^2)\right\}\\
= \sum_{1 \leq p<q \leq m} \frac{16 + n^2 + (80 + 8n + n^2) \rho_{pq}^2}{n^3} \\
= \frac{m(m-1)}{2n} + O(n^{-1}) \|\cormat - \iden\|^2_F + O(m^2/n^3)
\end{multline}
%\end{lemma}
and further details are given in \appref{ProofvarBdd}.
As a direct consequence of \citet{Hoeffding1948}'s classical result on the variance of U-statistics, we also have the bound

%\begin{lemma}[Variance of a U-statistic, \citet{Hoeffding1948}] \label{lem:Ustat_var}
% For the symmetric kernel $h$ i.i.d data ${\bf X}_1, \dots, {\bf X}_n$, the U-statistic
%\[
%{n \choose 4}^{-1} \sum_{1 \leq i < j < k < l \leq n}h({\bf X}_i, {\bf X}_j, {\bf X}_k , {\bf X}_l)
%\]
%has variance given by
%\[
%{n \choose 4}^{-1} \sum_{c = 1}^4
% {4 \choose c} { n - 4 \choose 4 -c } \zeta_c ,\]
%where for $c = 1, \dots, 4$,
%\[
%\zeta_c := \mathbb{E}[g_c({\bf X}_1, \dots, {\bf X}_c)^2 ]
%\]
% and the functions $g_c : (\mathbb{R}^m)^c \longrightarrow \mathbb{R}$ are defined as
%\[
%g_c (x_1, \dots, x_c) := \mathbb{E}[ h({\bf X}_1, \dots, {\bf X}_4) |{\bf X}_1 = x_1,\dots, {\bf X}_c = x_c] - \mathbb{E}[h({\bf X}_1, \dots, {\bf X}_4)].
%\]
% In particular, since ${n \choose 4}^{-1} {4 \choose c}{n-4 \choose 4 -c}$ is of the order $O(n^{-c})$, we have the bound
%\begin{equation} \label{II2zetabdd}
%\text{Var}[II_2] \lesssim    \sum_{c = 1}^4 n^{-c} \zeta_c
%\end{equation}
\begin{equation} \label{II2zetabdd}
\text{Var}[II_1] \lesssim    \sum_{c = 1}^4 n^{-c} \zeta_c,
\end{equation}
where
\[
\zeta_c := \mathbb{E}[g_c({\bf X}_1, \dots, {\bf X}_c)^2 ]
\]
 and the functions $g_c : (\mathbb{R}^m)^c \longrightarrow \mathbb{R}$, $c = 1, \dots, 4$,  are defined as
\begin{equation} \label{gc}
g_c (x_1, \dots, x_c) := \mathbb{E}[ h({\bf X}_1, \dots, {\bf X}_4) |{\bf X}_1 = x_1,\dots, {\bf X}_c = x_c] - \mathbb{E}[h({\bf X}_1, \dots, {\bf X}_4)].
\end{equation}
Hence, forming  estimates of the quantities $\zeta_1, \dots, \zeta_4$ can lead to an estimate of  $\text{Var}[II_1]$.

\begin{lemma} [Bound for the $\zeta_c$'s] \label{lem:zetabounds}
\begin{align}
\zeta_1 &\lesssim \frac{\|\cormat - \iden\|^4_F + m^2 ( 1+ \|\cormat - \iden\|_F^2) }{n^2} + \frac{m^4}{n^4}\label{zeta1est} \\
\zeta_2 &\lesssim \frac{m^3 (1 + \|\cormat - \iden\|_F)}{n^2}   +\frac{m^4}{n^4} \label{zeta2est}\\
\zeta_3 &\lesssim \|\cormat - \iden\|^4_F + m^2 ( 1+ \|\cormat - \iden\|_F^2) + \frac{m^4}{n^2 }  \label{zeta3est}\\
\zeta_4 &\lesssim  m^3(\|\cormat - \iden\|_F +1) + \frac{m^4}{n^2} \label{zeta4est}
%\zeta_1 &\lesssim  n^{-2}  (m^2 + 37 M ) + n^{-2}( m M + M^2) + n^{-2} (m^2 M + M^2) + O(m^3n^{-3}) \\
%\zeta_2 &\lesssim  n^{-2} m^3 \\
%\zeta_3 &\lesssim  m M + M^2 + M + m^2 M + \sqrt{M} m^3 n^{-1}\\
%\zeta_4 &\lesssim  m^3 \sqrt{M}
\end{align}

\end{lemma}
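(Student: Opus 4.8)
The plan is to exploit the additivity of the kernel over coordinate pairs. Writing $h_{pq} := h_{1, pq} - h_{2, pq} - \bar{h}_{2, pq} + h_{3, pq} + \bar{h}_{3, pq}$, so that $h({\bf X}_i, {\bf X}_j, {\bf X}_k, {\bf X}_l) = \sum_{1 \leq p < q \leq m} h_{pq}({\bf X}_{pq, i}, {\bf X}_{pq, j}, {\bf X}_{pq, k}, {\bf X}_{pq, l})$ with each $h_{pq}$ a function of \emph{only} the $p,q$ coordinates of its four vector arguments, the conditional expectations defining $g_c$ in \eqref{gc} split in the same way:
\[
g_c = \sum_{1 \leq p < q \leq m} g_{c, pq}, \qquad g_{c, pq}(x_1, \dots, x_c) := \mathbb{E}\bigl[ h_{pq} \,\big|\, {\bf X}_1 = x_1, \dots, {\bf X}_c = x_c \bigr] - \mathbb{E}[h_{pq}],
\]
where $g_{c, pq}$ depends only on the $p, q$ coordinates of $x_1, \dots, x_c$ and is centered, $\mathbb{E}[g_{c, pq}] = 0$. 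Hence $\zeta_c = \sum_{p < q} \sum_{p' < q'} \mathbb{E}[g_{c, pq} g_{c, p'q'}]$, and exactly as with the $\mathbb{S}(k)$ and $\mathbb{T}(k)$ decompositions, I would split this double sum according to the cardinality $k := |\{p, q\} \cup \{p', q'\}| \in \{2, 3, 4\}$ and estimate the three pieces separately.

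For the diagonal part $k = 2$ ($\{p, q\} = \{p', q'\}$, contributing $O(m^2)$ terms) one needs a bound on $\mathbb{E}[g_{c, pq}^2]$ for a single pair. I would compute the conditional expectations of the individual kernel pieces $h_{1, pq}, h_{2, pq}, h_{3, pq}$ (and their barred versions) given $c$ of the four sample arguments, using \lemref{df} for the derivative coefficients and Isserlis's theorem (\thmref{isserlis}) --- with the help of \texttt{mathematica} --- for the Gaussian moments this produces; the result is a polynomial in $\rho_{pq}$ with coefficients that are explicit rational functions of $n$. The crucial bookkeeping is the power of $n^{-1}$ carried by each contribution, which comes from (i) the $O(1), O(n^{-1}), O(n^{-2})$ prefactors already displayed in \eqref{h1pq}--\eqref{h2pq}, and (ii) the combinatorics of how many of the $4 - c$ integrated-out sample indices must coincide with the $c$ retained ones for an expectation to survive — it is this mechanism that supplies the extra powers of $n^{-1}$ making $II_1$ nearly degenerate. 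Summing the resulting bounds $\mathbb{E}[g_{c, pq}^2] \lesssim \sum_{j = 0}^{2} c_n^{(c)} \rho_{pq}^{2j}$ over the $O(m^2)$ pairs, together with $\sum_{p<q}\rho_{pq}^{2j} \lesssim \min(m^2, \|\cormat - \iden\|_F^{2j})$, then yields the low-order terms of \eqref{zeta1est}--\eqref{zeta4est}.

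For $k = 3$ and $k = 4$ one must control cross terms $\mathbb{E}[g_{c, pq} g_{c, p'q'}]$ with $\{p, q\} \neq \{p', q'\}$. Since $g_{c, pq}$ and $g_{c, p'q'}$ are explicit centered polynomials in two coordinate blocks overlapping in at most one index, $\mathbb{E}[g_{c, pq} g_{c, p'q'}]$ can be computed directly by Isserlis, but it is cleaner to use the structural fact (Gebelein's inequality, equivalently a Hermite-chaos expansion) that $|\mathbb{E}[g_{c, pq} g_{c, p'q'}]| \leq \mathbb{E}[g_{c, pq}^2]^{1/2} \mathbb{E}[g_{c, p'q'}^2]^{1/2}$ times the maximal correlation between the two blocks, the latter being at most $|\rho_{pp'}| + |\rho_{pq'}| + |\rho_{qp'}| + |\rho_{qq'}|$ (with the obvious modification when $k = 3$). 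Feeding in the per-pair bounds from the previous step and summing over the $O(m^3)$, resp.\ $O(m^4)$, configurations, I would apply Cauchy--Schwarz estimates of the type $\sum_{a, b, c} |\rho_{ab}| |\rho_{bc}| = \sum_b \bigl( \sum_a |\rho_{ab}| \bigr)^2 \leq \bigl( \sum_{a, b} |\rho_{ab}| \bigr)^2 \lesssim m^2 \|\cormat - \iden\|_F^2$ and $\sum_{a, b, c} \rho_{ab}^2 \rho_{bc}^2 \leq \|\cormat - \iden\|_F^4$, together with the crude bound $\|\cormat - \iden\|_F \leq m$, to convert these sums into the $m^3$- and $m^4$-type terms of \eqref{zeta1est}--\eqref{zeta4est}.

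I expect the main obstacle to be the $k = 2$ step. The kernels $h_{2, pq}$ and $h_{3, pq}$ are themselves sums of several sub-kernels of differing $n$-order, and conditioning on $c$ of the four arguments re-mixes these orders in a way that depends on the coincidence pattern of retained versus integrated sample indices, so one must be scrupulous not to lose — or over-claim — powers of $n$; in particular, pinning down the degenerate gain that makes $\zeta_1, \zeta_2$ smaller than $\zeta_3, \zeta_4$ by a factor $n^{-2}$, and matching the $n$-order of the $O(m^4)$-many cross-term contributions ($m^4/n^4$ for $\zeta_1, \zeta_2$ versus $m^4/n^2$ for $\zeta_3, \zeta_4$), is exactly where this care pays off. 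The symbolic normal-moment computations themselves are mechanical once set up (\texttt{mathematica} handles them), but assembling all the pieces so that the final exponents of $m$, $n$ and $\|\cormat - \iden\|_F$ come out precisely as stated is the delicate part; the $k = 3, 4$ cross-term bounds, by contrast, are routine once the per-pair estimates and the maximal-correlation bound are in hand.
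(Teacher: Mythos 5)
Your overall architecture (write $g_c=\sum_{p<q}g_{c,pq}$, track the powers of $n^{-1}$ coming from the prefactors in \eqref{h1pq}--\eqref{h2pq} and from coincidences of integrated-out sample indices, then reduce to sums of correlation products split by the cardinality of $\{p,q\}\cup\{p',q'\}$) is essentially the paper's, and your diagonal ($k=2$) step is sound. The genuine gap is in the cross terms $k=3,4$. The inequality you invoke --- that the maximal correlation between the Gaussian blocks $(X_p,X_q)$ and $(X_{p'},X_{q'})$ is at most $|\rho_{pp'}|+|\rho_{pq'}|+|\rho_{qp'}|+|\rho_{qq'}|$ --- is false without a lower bound on the smallest eigenvalue of $\cormat$, and no such bound is imposed on $\Theta(b)$. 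By Gebelein/Lancaster the maximal correlation equals the largest canonical correlation, i.e.\ the operator norm of $\Sigma_{AA}^{-1/2}\Sigma_{AB}\Sigma_{BB}^{-1/2}$, and the factor $\Sigma_{AA}^{-1/2}$ can blow up: if $\rho_{pq}\to 1$ and $X_{p'}$ is the standardized difference of $X_p$ and $X_q$, the maximal correlation is $1$ while all four cross-correlations tend to $0$.

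Moreover, even granting such a bound, the resulting estimate is too lossy to prove \eqref{zeta1est} and \eqref{zeta3est}: one cross-correlation factor per disjoint configuration, summed over $O(m^4)$ configurations, yields contributions of order $m^3\|\cormat-\iden\|_F$ for $\zeta_3$ and $m^3\|\cormat-\iden\|_F/n^2$ for $\zeta_1$, which are not dominated by the stated right-hand sides (take $\|\cormat-\iden\|_F^2\asymp m$ and $m\asymp n$: then $m^3\|\cormat-\iden\|_F\asymp m^{7/2}$ while $\|\cormat-\iden\|_F^4+m^2(1+\|\cormat-\iden\|_F^2)+m^4/n^2\asymp m^3$). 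The absence of any $m^3\|\cormat-\iden\|_F$ term in \eqref{zeta1est} and \eqref{zeta3est} is structural: the leading parts of $g_1$ and $g_3$ are built from monomials of even degree in each coordinate block, so when $\mathbb{E}[g_{c,pq}g_{c,p'q'}]$ is expanded by \thmref{isserlis} every surviving term carries at least \emph{two} cross-block correlation factors, producing sums of the form $\sum\rho_{pq}^2\rho_{rs}^2$ and $\sum|\rho_{pq}\rho_{rs}\rho_{pr}\rho_{qs}|$ that \lemref{threebounds} (with the cardinality-decomposition trick) converts into $\|\cormat-\iden\|_F^4+m^2(1+\|\cormat-\iden\|_F^2)$. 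That explicit computation --- the route you mention and then set aside --- is what the paper actually carries out (with \texttt{mathematica}), after splitting off the lower $n$-order pieces of the kernels whose squared sums account for the $m^4/n^4$ and $m^4/n^2$ terms; for $\zeta_4$ and $\zeta_2$, whose bounds \eqref{zeta4est} and \eqref{zeta2est} do contain $m^3$-type terms, the paper's reduction to expectations of the form $\mathbb{E}[A(p_1q_1,\tilde i)B(p_2q_2,\tilde j)]$ plays the role your maximal-correlation step was meant to play. As written, your proposal does not establish \eqref{zeta1est} or \eqref{zeta3est}.
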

Again, proving these estimates involves repeatedly applying \thmref{isserlis} with the help of \texttt{mathematica} and the details will be deferred to \appref{ProofvarBdd}. We note that these estimates are by no means sharp, but suffice for our purpose. Putting \lemref{zetabounds} and \eqref{II2zetabdd} together, it is a routine task to check that
\begin{equation*} \label{II2bdd}
 \text{Var}[II_1] \lesssim o\left(\frac{m^{2 (1 - \gamma)}}{n^2}\right) \sum_{k = 0}^4 \|\cormat - I\|_F^k
%\frac{\|\cormat - \iden\|^4_F }{n^3} + \frac{m^2}{n^3}\left(1 + \|\cormat - \iden\|_F^2\right)  + \frac{m^4}{n^5} +  \left(\frac{m^3}{n^4} + \frac{1}{n^2}\right)\|\cormat - \iden\|_F   .
\end{equation*}
for any $0 < \gamma < 1/2$. This, together with  \eqref{bddbytwopieces}  and \eqref{bddexpII1},  proved \lemref{II_1bdd}.

\section{Conclusion}

In this paper, we studied the exact power of the Rao's score statistic for testing independence, under the asymptotic regime where both the dimension $m$ and sample size $n$ grow to infinity when the ratio $m/n$ is bounded.  A consequence of our main result is  that  the Rao's score test is minimax rate optimal under this regime, with respect to a signal size $\|\cormat - \iden\|_F$ of order $\sqrt{m/n}$.

While previous related work \citep{ChenShao} on the null theory only requires the random variables to have finite moments, our power analysis relied on the normality assumption in different ways. Via applications of the Isserlis' theorem on normal moments (\thmref{isserlis}), all the higher moment quantities appeared in the calculations for the terms $I$ and $II$ in \secsref{BEsec} and \secssref{varbddsec}  can be controlled in terms of $\|{\bf R} - {\bf I}_m\|_F$, a second moment quantity in the original variables $X_1, \dots, X_m$ per se. It is thus conceivable that one can replace normality with appropriate higher moment conditions by carefully keeping track of these calculations. The tail bound for $III$ in \lemref{ProbBddIII} relies on a maximal inequality applicable to sub-exponential random variables, which is true for the centered sample covariances $\bar{S}_{pq}$ when they are formed with normal data (see \appref{PftailIII}). When normality cannot be assumed, we expect that one can use more general maximal inequalities such as \citet[Lemma 8]{CCK} along with their consequential moment conditions. A final caveat for pursuing the non-normal generality is that one should consider the more common definition of the sample covariance in \eqref{nice2} when constructing their Pearson correlations. Comparing \eqref{nice} with \eqref{nice2}, the insertion of sample means will likely complicate the calculations to follow under our current proof strategy.

\section*{Acknowledgments}

We thank the referees for their valuable comments and suggestions. Qi-Man Shao's research is partially  supported by the grant Hong Kong RGC GRF14302515.

\appendix

\section{Probability tail bound of $III$} \label{app:PftailIII}

 We will prove the tail bound for $III$ in \lemref{ProbBddIII}. For $1 \leq p ,   q \leq m$, by a standard trick \citep[p.221]{BickelLevina2008}, for any $t > 0$, one can show the sub-exponential inequality
\[
P \left(  |\bar{S}_{pq} |>  t\right)\leq 4 \exp
\left(
\frac{-t^2}{n^{-1} 2 (1  + \rho_{pq})  (2(1  + \rho_{pq} )+ t)}
\right)
\]
under our assumptions at the beginning of \secref{main}. Then by the maximal inequality in \citet[Lemma 2.2.10]{VDVW} and a union bound, we have  for any $0 <c < 1/2$,
%
% Notice that $P( \|S - R\|_\infty >t) \leq C \frac{\log m /n + \sqrt{\log m/ n}}{t}$, which implies, by \citet[]{VDVW},  that for any positive constant $c < 1/2$ (to be determined later) and any $\epsilon > 0$,
\begin{equation} \label{bddmax}
P( \max_{1 \leq p ,  q \leq m} |\bar{S}_{pq}|> n^{-c}) \lesssim  n^{c - 1} \log m  + n^{c - 1/2}\sqrt{\log m }.
\end{equation}
Note that by the definition of $III$,
\begin{equation} \label{bddI2}
|III| \leq   \max_{1 \leq p ,  q \leq m} |\bar{S}_{pq}|^5 \sum_{1 \leq p < q \leq m}\sum_{{\boldsymbol \lambda} : |{\boldsymbol \lambda}| = 5} \frac{{|\rho_{pq} + k_{pq}\bar{S}_{pq}  |}^{2 - \lambda_1 }}{{|1 + k_{pq}\bar{S}_{pp}|}^{1 + \lambda_2 } {| 1 +k_{pq}\bar{S}_{qq}  |}^{1 + \lambda_3 }}
\end{equation}
for ${\boldsymbol \lambda} = (\lambda_1, \lambda_2 , \lambda_3)$. If $ \max_{1 \leq p ,  q \leq m} |\bar{S}_{pq}| \leq n^{-c}$, for all $1 \leq p, q \leq m$ it must be true that
\begin{equation} \label{bddind}
|\rho_{pq} + k_{pq}\bar{S}_{pq}| \leq 1 + n^{-c}, |1 +  k_{pq} \bar{S}_{pp}| \geq 1- n^{-c}
\end{equation}
since $k_{pq} \in (0, 1)$
Combining \eqref{bddmax}, \eqref{bddI2}, \eqref{bddind}, with probability larger than $1  - C ( n^{c - 1} \log m  + n^{c - 1/2}\sqrt{\log m }) $
\[
|III| \leq C  n^{-5c} \frac{m(m-1)}{2}\frac{(1 + n^{-c})^2}{(1 - n^{-c})^7} \leq C \frac{m^2}{n^{5c}}
\]
for large $m, n$.

\section{Technical tools} \label{app:appen}

In this section we will lay out the technical tools required to finish the proofs in the paper.

\begin{lemma} \label{lem:df}
Let $f$ be as defined in \eqref{fun}.   For any ${\boldsymbol \lambda} = (\lambda_1, \lambda_2, \lambda_3) \in \mathbb{N}^3_{\geq 0}$

\[
 \partial^{\boldsymbol \lambda} f(u_1, u_2, u_3) =
  \begin{cases}
       (-1)^{\lambda_1 + \lambda_2} \lambda_1 ! \lambda_2 !  \frac{u_3^2}{u_1^{1 + \lambda_1}u_2^{1 + \lambda_2}}  & \text{if }  \lambda_3 = 0 \\
   2 (-1)^{\lambda_1 + \lambda_2} \lambda_1 ! \lambda_2 !   \frac{u_3^{2 - \lambda_3}}{u_1^{1 + \lambda_1}u_2^{1 + \lambda_2}}   & \text{if } \lambda_3  = 1,  2 \\
0      & \text{if }\lambda_3  > 2
  \end{cases}
\]
\end{lemma}

\begin{theorem} [\cite{isserlis}] \label{thm:isserlis}
For any natural number $k \geq 1$, let $(Z_1, \dots, Z_{2k}) $ be a mean zero normal vector with covariance matrix $\cormat = (\rho_{pq})_{1 \leq p , q \leq 2k}$. Then
\[
\mathbb{E}[Z_1 \dots Z_{2k}] =  \sum \rho_{p_1 p_2}\dots \rho_{p_{2k-1} p_{2k}},
\]
where the summation is over all possible $\frac{(2k)!}{2^k k!}$ partitions of the indices $1, \dots, 2k$ into $k$ pairs $(p_1, p_2), \dots, (p_{2k-1}, p_{2k})$.
\end{theorem}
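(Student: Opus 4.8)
The plan is to read the identity off the moment generating function of the Gaussian vector. Since $M(\mathbf{t}) := \mathbb{E}[\exp(t_1 Z_1 + \cdots + t_{2k} Z_{2k})] = \exp\!\bigl(\tfrac12\, \mathbf{t}' \cormat\, \mathbf{t}\bigr)$ is finite and real-analytic for every $\mathbf{t} = (t_1,\dots,t_{2k})' \in \mathbb{R}^{2k}$, all mixed moments of $(Z_1,\dots,Z_{2k})$ exist and
\[
\mathbb{E}[Z_1 \cdots Z_{2k}] = \left.\frac{\partial^{2k}}{\partial t_1 \cdots \partial t_{2k}} M(\mathbf{t})\right|_{\mathbf{t}=\mathbf{0}}.
\]
First I would expand $M(\mathbf{t}) = \sum_{r \geq 0} \frac{1}{2^r r!}\bigl(\sum_{1\le p,q\le 2k} \rho_{pq}\, t_p t_q\bigr)^{r}$ as a power series in the $t_i$, convergent in a neighbourhood of the origin. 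Applying $\partial^{2k}/(\partial t_1 \cdots \partial t_{2k})$ and evaluating at $\mathbf{t}=\mathbf{0}$ extracts precisely the coefficient of the squarefree degree-$2k$ monomial $t_1 t_2 \cdots t_{2k}$; since every term arising from the $r$-th summand has total degree $2r$, only the $r = k$ summand can contribute, so $\mathbb{E}[Z_1\cdots Z_{2k}]$ equals the coefficient of $t_1\cdots t_{2k}$ in $\frac{1}{2^k k!}\bigl(\sum_{p,q}\rho_{pq} t_p t_q\bigr)^k$.

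It then remains to identify that coefficient combinatorially. Writing $\bigl(\sum_{p,q}\rho_{pq} t_p t_q\bigr)^{k}$ as a sum over ordered tuples of index pairs $\bigl((a_1,b_1),\dots,(a_k,b_k)\bigr)$ of the monomials $\prod_{i=1}^{k} \rho_{a_i b_i}\, t_{a_i} t_{b_i}$, the tuples whose monomial equals $t_1 \cdots t_{2k}$ are exactly those for which the blocks $\{a_i,b_i\}$, $i=1,\dots,k$, form a perfect matching of $\{1,\dots,2k\}$. Each unordered perfect matching is produced by exactly $2^k k!$ such ordered tuples (the $k!$ orderings of the blocks, and within each block the two choices of which coordinate plays the role of $a_i$), and since $\cormat$ is symmetric the product $\prod_i \rho_{a_i b_i}$ depends only on the matching. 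Hence the prefactor $1/(2^k k!)$ exactly cancels this overcount and the coefficient of $t_1 \cdots t_{2k}$ is $\sum \rho_{p_1 p_2}\cdots\rho_{p_{2k-1}p_{2k}}$ summed over perfect matchings, the number of which is $(2k-1)!! = (2k)!/(2^k k!)$, as claimed. An appealing feature of this route is that it needs no nondegeneracy hypothesis on $\cormat$; the one step requiring genuine care is the multiplicity count $2^k k!$ relating ordered pair-tuples to unordered matchings.

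An alternative I would keep in reserve is induction on $k$ via Gaussian integration by parts: for a jointly normal vector and a polynomially growing smooth $g$ one has $\mathbb{E}[Z_1\, g(Z_2,\dots,Z_{2k})] = \sum_{j=2}^{2k} \rho_{1j}\, \mathbb{E}[\partial_j g(Z_2,\dots,Z_{2k})]$. Taking $g(z_2,\dots,z_{2k}) = z_2 \cdots z_{2k}$ gives $\mathbb{E}[Z_1 \cdots Z_{2k}] = \sum_{j=2}^{2k} \rho_{1j}\, \mathbb{E}\bigl[\prod_{2 \le \ell \le 2k,\ \ell \ne j} Z_\ell\bigr]$, and applying the inductive hypothesis to each $(2k-2)$-fold moment while appending the pair $\{1,j\}$ recovers exactly the perfect matchings of $\{1,\dots,2k\}$ containing $\{1,j\}$, each counted once as $j$ ranges over $\{2,\dots,2k\}$; the base case $k=1$ is $\mathbb{E}[Z_1 Z_2] = \rho_{12}$. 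On this second route the only non-routine points are justifying the integration-by-parts identity for the unbounded polynomial $g$ (a truncation/density argument) and, when $\cormat$ is singular, reducing to the nondegenerate case by replacing $\cormat$ with $\cormat + \varepsilon\mathbf{I}$ and letting $\varepsilon \downarrow 0$. Since the moment generating function route sidesteps both, I expect it to be the cleaner one to write out in full.
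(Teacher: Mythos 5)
Your argument is correct. Note, however, that the paper does not prove this statement at all: it is quoted as a classical result with a citation to Isserlis (1918) and then used as a black box (e.g.\ in Corollary~\ref{cor:momentfact} and throughout the moment computations), so there is no in-paper proof to compare against. Your moment-generating-function route is the standard textbook derivation and is complete as sketched: the expansion of $\exp\bigl(\tfrac12\,\mathbf{t}'\cormat\,\mathbf{t}\bigr)$, the observation that only the $r=k$ term can contribute to the squarefree monomial $t_1\cdots t_{2k}$, and the multiplicity count $2^k k!$ (which uses the symmetry $\rho_{pq}=\rho_{qp}$ and the fact that diagonal terms $\rho_{pp}t_p^2$ cannot appear in a squarefree monomial) are all handled correctly, and the count of matchings $(2k)!/(2^k k!)$ agrees with the statement. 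Your fallback via Gaussian integration by parts and induction is also sound, and you correctly identify its only delicate points (justifying Stein's identity for polynomial $g$ and handling singular $\cormat$ by perturbation); the MGF route indeed avoids both, and it requires no nondegeneracy assumption, which matches the generality in which the paper applies the theorem.
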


\begin{corollary} \label{cor:momentfact}
For any four indices $1 \leq p_1, q_1, p_2, q_2 \leq m$,
 \[
\mathcal{M}_{\substack{(p_d, q_d)\\ d \in [2]}}  := \mathbb{E}[(X_{p_1} X_{q_1} - \rho_{p_1q_1})(X_{p_2} X_{q_2} - \rho_{p_2 q_2})] = \rho_{p_1q_2} \rho_{q_1p_2} + \rho_{p_1p_2} \rho_{q_1q_2}
\]
\end{corollary}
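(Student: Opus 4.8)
The plan is to obtain the identity by a direct expansion of the product followed by a single application of Isserlis's theorem (\thmref{isserlis}) with $k = 2$. First I would expand
\[
(X_{p_1} X_{q_1} - \rho_{p_1q_1})(X_{p_2} X_{q_2} - \rho_{p_2 q_2}) = X_{p_1} X_{q_1} X_{p_2} X_{q_2} - \rho_{p_2 q_2} X_{p_1} X_{q_1} - \rho_{p_1 q_1} X_{p_2} X_{q_2} + \rho_{p_1 q_1}\rho_{p_2 q_2}
\]
and take expectations term by term, using the standing normalization $\mathbb{E}[X_p] = 0$, $\text{Var}[X_p] = 1$ from \secref{main}, so that $\mathbb{E}[X_{p_1} X_{q_1}] = \rho_{p_1 q_1}$ and $\mathbb{E}[X_{p_2} X_{q_2}] = \rho_{p_2 q_2}$.

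The only nontrivial input is the fourth mixed moment $\mathbb{E}[X_{p_1} X_{q_1} X_{p_2} X_{q_2}]$. Since $(X_{p_1}, X_{q_1}, X_{p_2}, X_{q_2})$ is a mean-zero Gaussian vector, \thmref{isserlis} (with $2k = 4$) gives the sum over the three pairings of the indices $\{1,2,3,4\}$ identified with $\{p_1, q_1, p_2, q_2\}$, namely
\[
\mathbb{E}[X_{p_1} X_{q_1} X_{p_2} X_{q_2}] = \rho_{p_1 q_1}\rho_{p_2 q_2} + \rho_{p_1 p_2}\rho_{q_1 q_2} + \rho_{p_1 q_2}\rho_{q_1 p_2}.
\]
Substituting this back, the $\rho_{p_1 q_1}\rho_{p_2 q_2}$ contribution cancels against the two cross terms $-\rho_{p_2 q_2}\rho_{p_1 q_1}$, $-\rho_{p_1 q_1}\rho_{p_2 q_2}$ and the constant term $+\rho_{p_1 q_1}\rho_{p_2 q_2}$, leaving exactly $\rho_{p_1 q_2}\rho_{q_1 p_2} + \rho_{p_1 p_2}\rho_{q_1 q_2}$, as claimed.

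There is essentially no obstacle here: the statement is a bookkeeping corollary of Isserlis's theorem, and the only thing to be careful about is matching the index pairings correctly and verifying the cancellation of the lower-order terms. I would present it in two or three lines.
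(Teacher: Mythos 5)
Your proposal is correct and matches the paper's argument: the paper proves the corollary with exactly this one-line appeal to Isserlis's theorem (\thmref{isserlis}), and your expansion, application of the $2k=4$ pairing formula, and cancellation of the $\rho_{p_1q_1}\rho_{p_2q_2}$ terms is precisely the bookkeeping it leaves implicit.
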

\begin{proof}
A simple corollary of \thmref{isserlis}.
\end{proof}

\begin{lemma} \label{lem:prodOrderLemma}
For a fixed natural number $k$, suppose $1 \leq p_d, q_d, \leq m$, $d = 1, \dots, 2k$ are any $2k$ pairs of  variable indices. Then
\[
\mathbb{E}\left[ \prod_{d = 1}^{2k}\bar{S}_{p_d q_d}\right] = O(n^{-k}),
\]
where the $O(\cdot)$ term is uniform for all choices of  $1 \leq p_d, q_d, \leq m$, $d = 1, \dots, 2k$.
\end{lemma}
\begin{proof}
On expansion,
\begin{equation*}
\mathbb{E}\left[ \prod_{d = 1}^{2k}\bar{S}_{p_d q_d}\right] =
n^{-2k}\left\{ \sum_{ \substack{i_d = 1, \dots, n\\ d = 1, \dots, 2k}} \mathbb{E}\left[\prod_{d = 1}^{2k}( X_{  p_di_d}X_{  q_di_d} - \rho_{p_d q_d})\right] \right\},
\end{equation*}
so we only need to show the term in $\{\cdot\}$ on the right hand side above is a uniform $O(n^k)$ term. We note that, by independence, an expectation on the right hand of the preceding display can only be non-zero if
\begin{equation} \label{notexists}
\not\exists\quad d' \in [2k]  \text{  such that } i_{d'} \not = i_d \quad  \forall \quad  d  \in [2k]\backslash \{d'\}.
\end{equation}
One way that \eqref{notexists} may happen is when there is a permutation $\pi \in \mathcal{S}_{2k}$ such that
\begin{equation} \label{exists}
i_{\pi_d} = i_{\pi_{k+d}} , \text{ for all } d = 1, \dots, k .
\end{equation}
There can at most be $O(n^k)$ many combinations of $i_1, \dots, i_{2k}$ satisfying \eqref{exists} since when \eqref{exists} is true,  the set $\cup_{d = 1}^{2k}\{i_d\}$  can at most have $k$ elements leaving us with $O({n \choose k}) = O(n^k)$ many choices for the combination of $i_1, \dots, i_{2k}$. We note that when a configuration in \eqref{notexists} is such that the set $\cup_{d = 1}^{2k}\{i_d\}$ has cardinality exactly equal to $k$,
\begin{multline} \label{manypairings}
\mathbb{E}\left[\prod_{d = 1}^{2k}( X_{  p_di_d}X_{  q_di_d} - \rho_{p_d q_d})\right] \\
= \prod_{d = 1}^k \mathbb{E}[ ( X_{p_{\pi_d}} X_{q_{\pi_{d}}}- \rho_{p_{\pi_d}q_{\pi_{d}}}) ( X_{p_{\pi_{d+k}}} X_{q_{\pi_{d+k}}}- \rho_{p_{\pi_{d+k}}q_{\pi_{{d+k}}}})] .\\
= \prod_{d = 1}^k ( \rho_{p_{\pi_d}q_{\pi_{d + k}}} \rho_{q_{\pi_d}p_{\pi_{d + k}}} + \rho_{p_{\pi_d}p_{\pi_{d + k}}} \rho_{q_{\pi_d}q_{\pi_{d + k}}}),
\end{multline}
by Corollary~\ref{cor:momentfact}.
 One can also easily see that there are at most   $O(n^{k-1})$ many combinations of $i_1, \dots, i_{2k}$ other than ones satisfying \eqref{exists} that can lead to \eqref{notexists}. Hence by \thmref{isserlis} and our assumption at the beginning of \secref{main}, we have
\[
 \sum_{ \substack{i_d = 1, \dots, n\\ d = 1, \dots, 2k}} \mathbb{E}\left[\prod_{d = 1}^{2k}( X_{  p_di_d}X_{  q_di_d} - \rho_{p_d q_d})\right] = O(n^k),
\]
where the $O(\cdot)$ is uniform for all choices of $1 \leq p_d, q_d \leq m$.
\end{proof}

The next two lemmas on sums of products of the entries  in the population correlation matrix $\cormat  = (\rho_{pq})_{ 1 \leq p, q \leq m}$ are keys for finishing our proofs.
\begin{lemma} \label{lem:threebounds}
Suppose $\pi = (\pi_1, \dots, \pi_4)$ is a particular permutation of the four indices $p, q, r, s$, say, $\pi  = (p, r, s, q)$. The following estimates are true:

\begin{align}
 \sum_{\substack{1 \leq  p, q, r, s  \leq m\\ p, q,r, s\text{ all distinct}}} |\rho_{pq} \rho_{rs} \rho_{\pi_1\pi_2} \rho_{\pi_3\pi_4}|  &\lesssim \|\cormat - \iden\|_F^4 \label{5.1}\\
%\sum_{ 1 \leq p \leq m} \sum_{\substack{1 \leq q \leq m \\q \not = p}}  \sum_{\substack{1 \leq r \leq m\\r \not = p, q}}
\sum_{\substack{1 \leq p, q, r \leq m \\p, q, r \text{ all distinct}}}
 |\rho_{pq} \rho_{qr} \rho_{pr}| &\lesssim  \|\cormat - \iden\|_F^4 +  \|\cormat - \iden\|_F^2 \label{5.2}
%%\sum_{\substack{1 \leq r \leq m\\r \not = p, q}}\sum_{\substack{1 \leq q \leq m \\q \not = p}}
%\sum_{\substack{1 \leq p, q, r \leq m \\p, q, r \text{all distinct}}}
%| \rho_{pq}^2 \rho_{pr}^2| & \lesssim \|\cormat - \iden\|_F^4 \label{5.3}
\end{align}

\end{lemma}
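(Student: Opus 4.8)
The plan is to prove both estimates by a standard Cauchy--Schwarz-type argument that exploits the fact that the diagonal entries of $\cormat$ are all $1$, so that $\|\cormat - \iden\|_F^2 = \sum_{p \neq q} \rho_{pq}^2$ and every off-diagonal $|\rho_{pq}| \leq 1$. Throughout, write $r_{pq} := \rho_{pq}$ for $p \neq q$; each term in both sums involves only off-diagonal entries because the indices are required to be distinct.

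For \eqref{5.1}, I would first fix the permutation $\pi = (p, r, s, q)$ (the other cases are handled identically by relabeling), so the summand is $|r_{pq} r_{rs} r_{pr} r_{sq}|$. I would apply the inequality $|abcd| \le \tfrac12(a^2 c^2 + b^2 d^2)$ with $a = r_{pq}$, $b = r_{rs}$, $c = r_{pr}$, $d = r_{sq}$, giving a bound by $\tfrac12 \sum r_{pq}^2 r_{pr}^2 + \tfrac12 \sum r_{rs}^2 r_{sq}^2$. Each of these two sums, after dropping the distinctness constraints (which only adds nonnegative terms) and summing over the free indices, factorizes: for instance $\sum_{p,q,r} r_{pq}^2 r_{pr}^2 \le \sum_p \big(\sum_q r_{pq}^2\big)\big(\sum_r r_{pr}^2\big) = \sum_p \big(\sum_q r_{pq}^2\big)^2 \le \big(\sum_{p,q} r_{pq}^2\big)^2 = \|\cormat - \iden\|_F^4$. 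The fourth index $s$ in that term is then free but constrained by distinctness to a bounded multiplicative factor absorbed into ``$\lesssim$''; more carefully, I would keep one index tied down by the squared entries at each stage so that no genuinely free summation of size $m$ is ever left unbounded. The key structural point is that in the product $r_{pq} r_{rs} r_{pr} r_{sq}$ the four indices form a $4$-cycle $p \to q \to s \to r \to p$, so pairing ``opposite edges'' of the cycle always yields two disjoint paths whose squared-entry sums each telescope into $\|\cormat-\iden\|_F^2$.

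For \eqref{5.2}, the summand $|r_{pq} r_{qr} r_{pr}|$ is over a triangle on $\{p,q,r\}$. Here I would split off one edge, say $r_{pr}$, bounded crudely by $1$, and estimate $\sum_{p,q,r} |r_{pq} r_{qr}| \cdot |r_{pr}|$. Actually a cleaner route: bound $|r_{pq} r_{qr} r_{pr}| \le \tfrac12 |r_{pr}|(r_{pq}^2 + r_{qr}^2)$, then sum the first piece as $\sum_{p,r} |r_{pr}| \sum_q r_{pq}^2$. The inner sum over $q$ is at most $\|\cormat-\iden\|_F^2$ if we are crude, but we want a sharper bound, so instead bound $\sum_{p,r}|r_{pr}|\, a_p$ where $a_p := \sum_q r_{pq}^2$ using Cauchy--Schwarz in $(p,r)$: $\sum_{p,r}|r_{pr}| a_p \le \big(\sum_{p,r} r_{pr}^2\big)^{1/2}\big(\sum_{p,r} a_p^2\big)^{1/2} = \|\cormat-\iden\|_F \cdot \big(\sum_p m\, a_p^2\big)^{1/2}$; this introduces an unwanted $\sqrt m$. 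The right move instead is to write $\sum_{p,q,r}|r_{pr}|r_{pq}^2 \le \sum_{p,q} r_{pq}^2 \sum_r |r_{pr}| \le \sum_{p,q} r_{pq}^2 \cdot \big(m^{1/2}(\sum_r r_{pr}^2)^{1/2}\big)$, still problematic. I would therefore handle \eqref{5.2} by the dichotomy already implicit in the right-hand side: either all three entries are genuinely present, in which case $|r_{pq}r_{qr}r_{pr}| \le (r_{pq}^2 + r_{qr}^2 + r_{pr}^2)^{3/2}/\,\text{(const)}$ is too lossy — so instead I bound $\sum |r_{pq}r_{qr}r_{pr}| \le \big(\sum r_{pq}^2 r_{qr}^2\big)^{1/2}\big(\sum r_{pr}^2\big)^{1/2}$ by Cauchy--Schwarz over the triple $(p,q,r)$, and then $\sum_{p,q,r} r_{pq}^2 r_{qr}^2 \le \sum_q \big(\sum_p r_{pq}^2\big)\big(\sum_r r_{qr}^2\big) \le \|\cormat-\iden\|_F^4$, while $\sum_{p,q,r} r_{pr}^2 \le m \|\cormat - \iden\|_F^2$; multiplying the square roots gives $\|\cormat-\iden\|_F^2 \cdot m^{1/2}\|\cormat-\iden\|_F$, which is $\lesssim \|\cormat-\iden\|_F^4 + \|\cormat-\iden\|_F^2$ only after invoking $\|\cormat-\iden\|_F \gtrsim \sqrt{m/n}$-type information we do not have here. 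The genuinely correct and simplest argument: bound two of the three factors by $1$ is too weak; instead use $\sum_{p,q,r}|r_{pq}r_{qr}r_{pr}| \le \sum_{p,q}|r_{pq}|\sum_r |r_{qr}r_{pr}| \le \sum_{p,q}|r_{pq}|\big(\sum_r r_{qr}^2\big)^{1/2}\big(\sum_r r_{pr}^2\big)^{1/2}$, then Cauchy--Schwarz in $(p,q)$ gives $\le \big(\sum_{p,q} r_{pq}^2\big)^{1/2}\big(\sum_{p,q}(\sum_r r_{qr}^2)(\sum_r r_{pr}^2)\big)^{1/2} = \|\cormat-\iden\|_F \cdot \big(\sum_q \sum_r r_{qr}^2\big) = \|\cormat - \iden\|_F^3$, and since $\|\cormat-\iden\|_F^3 \le \|\cormat-\iden\|_F^4 + \|\cormat-\iden\|_F^2$ by AM--GM (i.e. $t^3 \le t^2 + t^4$ for all $t \ge 0$), we are done.

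The main obstacle is getting \eqref{5.2} with the \emph{exact} right-hand side $\|\cormat-\iden\|_F^4 + \|\cormat-\iden\|_F^2$ rather than something with a stray factor of $m$: the naive Cauchy--Schwarz splittings that treat all three indices symmetrically tend to leak an $m^{1/2}$, so one must split the triangle product asymmetrically (one factor, then a Cauchy--Schwarz over the remaining two indices summing the other two factors' squares) so that every index summation is ``paid for'' by a copy of $\|\cormat - \iden\|_F^2$, and finally absorb the resulting $\|\cormat-\iden\|_F^3$ into the stated bound via the elementary inequality $t^3 \le t^2 + t^4$. For \eqref{5.1} the analogous care is needed in pairing the four cyclically-arranged edges into two vertex-disjoint paths; once the pairing respects the cycle structure the factorization is routine.
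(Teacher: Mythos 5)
Your final arguments for both bounds are correct, and for \eqref{5.1} you land exactly where the paper does: the paper simply applies $2|ab|\le a^2+b^2$ with $a=\rho_{pq}\rho_{rs}$ and $b=\rho_{\pi_1\pi_2}\rho_{\pi_3\pi_4}$, i.e.\ it squares the two vertex-disjoint pairings already present in the summand, so each squared product factors into $\|\cormat-\iden\|_F^2\cdot\|\cormat-\iden\|_F^2$; that is precisely your ``pair opposite edges of the $4$-cycle'' remark. One caution: your first attempt for \eqref{5.1}, $|abcd|\le\tfrac12(a^2c^2+b^2d^2)$ with the two edges sharing the vertex $p$, cannot be rescued by the claim that the leftover index $s$ contributes only ``a bounded multiplicative factor'' --- summing over $s$ distinct from $p,q,r$ costs a factor of order $m$, not a constant. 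Your eventual opposite-edge pairing avoids this, so the slip does not damage the proof, but that parenthetical justification is wrong as stated and should be deleted.

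For \eqref{5.2} your route genuinely differs from the paper's. The paper writes the sum as $\sum_{p\ne q}|\rho_{pq}|\sum_{r\ne p,q}|\rho_{qr}\rho_{pr}|$, applies $2|ab|\le a^2+b^2$ to obtain $\|\cormat-\iden\|_F^2$ plus $\sum_{p\ne q}\bigl(\sum_{r\ne p,q}|\rho_{qr}\rho_{pr}|\bigr)^2$, then expands the square into a four-index sum and bounds it by $\|\cormat-\iden\|_F^4$ exactly as in \eqref{5.1}. You instead apply Cauchy--Schwarz twice (first in $r$, then in $(p,q)$ against $|\rho_{pq}|$) to get the sharper intermediate bound $\|\cormat-\iden\|_F^3$ and absorb it via $t^3\le t^2+t^4$. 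Both are elementary and of comparable length; your version isolates the cleaner fact that the triangle sum is $O(\|\cormat-\iden\|_F^3)$, while the paper's version reuses the \eqref{5.1} computation verbatim and lands directly on the stated right-hand side without the final absorption step.
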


\begin{proof}[Proof of \lemref{threebounds}]

With a slight abuse of notations, the expression ``$r \not= p , q$" means that $r$ is a number that is not equal to $p$ nor $q$.

By the fact that $2 |ab| \leq a^2 + b^2$ for all $a, b \in \mathbb{R}$,
\begin{align*}
 \sum_{ \substack{1 \leq p, q, r, s \leq m \\ p, q, r, s\text{ all distinct}}} |\rho_{pq} \rho_{rs} \rho_{\pi_1\pi_2} \rho_{\pi_3\pi_4}|
 &\leq \sum_{ \substack{1 \leq p, q, r, s \leq m \\ p, q, r, s\text{ all distinct}}} \left\{(\rho_{pq} \rho_{rs})^2 + (\rho_{\pi_1\pi_2} \rho_{\pi_3\pi_4})^2 \right\}\\
&\leq  2 \sum_{1 \leq p \not = q \leq m } \rho_{pq}^2 \sum_{1 \leq r \not = s \leq m} \rho_{rs}^2 = 2 \|\cormat - \iden\|_F^4,
\end{align*}
which proves \eqref{5.1}. Similarly,
\begin{align*}
&\sum_{\substack{1 \leq p, q, r \leq m \\p, q, r \text{ distinct}}} | \rho_{pq} \rho_{qr} \rho_{pr}| = \sum_{1 \leq p \leq m}  \sum_{\substack{1\leq q \leq m \\ q \not = p} }(|\rho_{pq}| \sum_{\substack{ 1 \leq r \leq m \\ r \not = p, q}}|\rho_{qr} \rho_{pr}| )\\
&\leq\sum_{\substack{ 1 \leq p, q \leq m \\p \not = q}} \rho_{pq}^2 + \sum_{\substack{ 1 \leq p, q \leq m \\p \not = q}} (\sum_{\substack{ 1 \leq r \leq m \\ r \not = p, q}}|\rho_{qr} \rho_{pr}|)^2 \qquad \text{ by \ \ $2 |ab| \leq a^2 + b^2$ for all $a, b \in \mathbb{R}$}\\
&= \|\cormat - \iden\|_F^2 + \sum_{ \substack{1 \leq p, q \leq m \\ p \not = q}} (\sum_{\substack {1 \leq r \leq m \\ r  \not = p, q}}\sum_{\substack{ 1 \leq s \leq m \\ s \not = p, q}}|\rho_{qr} \rho_{pr}\rho_{qs} \rho_{ps}|)\\
%&\leq  \|\cormat - \iden\|_F^2 + \sum_{1 \leq p \not = q \leq m} \sum_{\substack{ 1 \leq r \leq m\\r \not = p, q}}\sum_{\substack{ 1 \leq s \leq m\\s \not = p, q}}(\rho_{qr}\rho_{ps})^2+  \sum_{1 \leq p \not = q \leq m} \sum_{\substack{ 1 \leq r \leq m\\r \not = p, q}}\sum_{\substack{ 1 \leq s \leq m\\s \not = p, q}}(\rho_{pr}\rho_{qs})^2 \\
%&\leq  \left[\|R - I\|_F^2 + \sum_{1 \leq p \not= q \leq m} \left(\sum_{\substack{1 \leq r \leq m \\ r \not = p, q}}\rho_{qr}^2\right)\left(\sum_{\substack{1 \leq s \leq m \\ s \not = p, q}}\rho_{ps}^2\right)+  \sum_{1 \leq p \not= q \leq m} \left(\sum_{\substack{1 \leq r \leq m \\ r \not = p, q}}\rho_{pr}^2\right)\left(\sum_{\substack{1 \leq s \leq m \\ s \not = p, q}}\rho_{qs}^2\right)\right]\\
&\leq  \|\cormat - \iden\|_F^2 + 2\|\cormat - \iden\|_F^4,
\end{align*}
where the last inequality comes from a similar proof as the one for \eqref{5.1}.
%Lastly,
%\begin{align*}
%\sum_{1 \leq p \leq m}  \sum_{\substack{1 \leq q\leq m \\  q \not = p}}  \sum_{\substack{ 1 \leq r \leq m \\ r \not = p, q}} \rho_{pq}^2 \rho_{pr}^2 &\leq \sum_{1\leq p \leq m}( \sum_{\substack{ 1\leq q \leq m \\ q  \not= p}} \rho_{pq}^2) ( \sum_{ \substack{1 \leq r \leq m \\ r \not= p}} \rho_{pr}^2) \\
%&\leq \sum_{1 \leq p \leq m}  \sum_{\substack{1 \leq q \leq m \\q \not= p}} \rho_{pq}^2 \|\cormat - \iden\|_F^2 =\|\cormat - \iden\|_F^4
%\end{align*}
%proves \eqref{5.3}.
\end{proof}

\begin{lemma} \label{lem:bddOnSums}
For $k = 5, \dots, 8$,
\begin{enumerate}
\item
\[
\sum_{\substack{1 \leq p_d, q_d\leq m\\d = 1, \dots, 4\\ |\cup_{d = 1}^4 \{p_d, q_d\}| = k}} \prod_{d  = 1}^4|\rho_{p_d q_d} | =  O(m^4) \|\cormat - \iden\|_F^{k - 4}.
\]

\item If $\pi = (\pi_1, \dots, \pi_8)$ and $\tau = (\tau_1, \dots, \tau_8)$ are two fixed permutations of the eight indices $p_1, q_1, \dots, p_4, q_4$. For instance,  $\pi$ can be equal to, say,  $(p_1, p_4, q_3, q_2, p_2, q_1, q_4,  p_3)$. Then

\[
\sum_{\substack{1 \leq p_d, q_d\leq m\\d = 1, \dots, 4\\ |\cup_{d = 1}^4 \{p_d, q_d\}| = k}}\prod_{d' = 1, 3, 5, 7}|\rho_{\pi_{d'} \pi_{d'+1}}\rho_{{\tau_{d'}} {\tau_{d'+1}}}|= O(m^{8 - k}) \|\cormat - \iden\|_F^{2(k - 4)}
\]
\end{enumerate}
\end{lemma}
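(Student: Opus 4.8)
\textbf{Proof proposal for \lemref{bddOnSums}.}

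The plan is to handle both parts by the same device: fix the set $U = \cup_{d=1}^4\{p_d,q_d\}$ and count how the choice of the $8$ indices $p_1,q_1,\dots,p_4,q_4$ with $|U| = k$ distributes over $U$, then bound each contribution by a product of Frobenius norms and a number of ``free'' indices. For part (i), since $|U| = k$ with $k \ge 5$, at least $k - 4$ of the four pairs $(p_d,q_d)$ must consist of two distinct indices (each ``degenerate'' pair with $p_d = q_d$ contributes $\rho_{p_dq_d} = 1$ and reduces $|U|$ by one relative to the all-distinct-pairs maximum of $8$; more carefully, a pair with $p_d = q_d$ forces an index to be repeated). The point is that $\prod_{d=1}^4 |\rho_{p_dq_d}|$ equals $\prod_{d \in \mathcal{D}} |\rho_{p_dq_d}|$ where $\mathcal{D}$ is the set of non-degenerate pairs and $|\mathcal{D}| \ge k - 4$; each factor $|\rho_{p_dq_d}| \le \frac12(\rho_{p_dq_d}^2 + 1)$, but we want to extract $\|\cormat - \iden\|_F^{k-4}$, so instead I would bound $|\rho_{p_dq_d}| \le 1$ trivially for all but $k-4$ of the non-degenerate pairs and sum those freely (giving $O(m^{?})$), while for a chosen subcollection of $k-4$ non-degenerate pairs I sum $\sum_{p_d \ne q_d} |\rho_{p_dq_d}| \le m^{1/2}\,\|\cormat-\iden\|_F$ by Cauchy--Schwarz — wait, that is not tight enough. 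The cleaner route: group the $k - 4$ extracted factors into $\lceil (k-4)/2 \rceil$ products of pairs and use $\sum_{p\ne q}\rho_{pq}^2 = \|\cormat - \iden\|_F^2$ directly when two extracted pairs happen to share... Let me instead simply bound $\prod |\rho_{p_dq_d}| \le \prod (\text{those factors})$, apply AM--GM $\prod_{j=1}^{k-4}|a_j| \le \sum \frac{|a_j|^{k-4}}{k-4}$ won't split either. The robust argument is: for each assignment of the $8$ positions to the $k$ distinct values in $U$, the number of such assignments is $O(1)$ (bounded by a function of $k$ only), the number of choices of the $k$ values is $O(m^k)$ but constrained; rewrite the sum as $O(1)$ many sums of the shape $\sum_{\text{distinct}} \prod |\rho_{\cdot\cdot}|$ over a multigraph on $k$ vertices with $4$ edges of which $\ge k-4$ are non-loops, and then bound each such graph sum by $m^{(\#\text{isolated-from-nonloops vertices})}\,\|\cormat-\iden\|_F^{(\#\text{nonloop edges})}$ after reducing via $2|ab|\le a^2+b^2$ exactly as in the proof of \eqref{5.1} and \eqref{5.2}; since $\#$nonloop edges $\ge k-4$ and there are at most $k$ vertices, the remaining free vertices number at most $k - (k-4) = 4$, yielding $O(m^4)\|\cormat-\iden\|_F^{k-4}$.

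For part (ii), each of the eight indices appears exactly once among $\pi_1,\dots,\pi_8$ and once among $\tau_1,\dots,\tau_8$, so the product $\prod_{d'=1,3,5,7}|\rho_{\pi_{d'}\pi_{d'+1}}\rho_{\tau_{d'}\tau_{d'+1}}|$ is a product of $8$ correlation factors forming a multigraph on the $\le k$ vertices in which every vertex has degree $4$ (two from the $\pi$-matching, two from the $\tau$-matching). Again I would first dispose of loops: a loop $\rho_{pp}=1$ arises only when $|U| < 8$, and I would split off the $8 - k$ ``collisions''. After removing loops the surviving product has at least $2(k-4)$ genuine (non-loop) edges on at most $k$ vertices. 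Repeatedly applying $2|\rho_{ab}\rho_{cd}| \le \rho_{ab}^2 + \rho_{cd}^2$ to pair up factors and then summing $\sum_{a\ne b}\rho_{ab}^2 = \|\cormat-\iden\|_F^2$, I extract $\|\cormat-\iden\|_F^{2(k-4)}$ while the number of indices not pinned down by a summed-over $\rho^2$ factor is at most $k - (k-4) = 4$... but the claimed exponent on $m$ is $8-k$, not $4$. The resolution is that in part (ii) the summation index set has only $|U| = k$ distinct values while there are $8$ index \emph{slots}, so the ``free'' count is really over the $k$ distinct values: after extracting $2(k-4)$ factors as $(k-4)$ squared-correlation sums, those consume $2(k-4)$ of the $k$ distinct values (each $\sum_{a\ne b}\rho_{ab}^2$ eating two), leaving $k - 2(k-4) = 8 - k$ free values, each ranging over $[m]$, hence $O(m^{8-k})$. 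One must check $8 - k \ge 0$, i.e. $k \le 8$, which holds, and that $k - 4 \le $ available pairings; the degree-$4$ structure guarantees enough disjoint pairs of edges to make the $2|ab|\le a^2+b^2$ step go through cleanly.

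The main obstacle I anticipate is the bookkeeping in the edge-pairing step: when applying $2|\rho_{ab}\rho_{cd}|\le \rho_{ab}^2+\rho_{cd}^2$ one must choose the pairing of the $8$ (resp.\ $4$) correlation factors so that (a) each resulting squared factor is a genuine off-diagonal $\rho^2$ (never a loop, so that $\sum \rho_{ab}^2 = \|\cormat-\iden\|_F^2$ applies and not $\sum 1 = m$), and (b) the squared factors one keeps for summation involve enough \emph{distinct} index values to be summed independently down to $\|\cormat-\iden\|_F^2$ each, while the leftover indices are correctly counted as the $O(m^4)$ or $O(m^{8-k})$ free factor. Since the permutations $\pi,\tau$ are arbitrary, I would argue uniformly over the finitely many (bounded by a function of $k$) combinatorial types of the resulting multigraph rather than case-splitting by hand, invoking the already-proven \lemref{threebounds} as the base case for the $k=5,8$ all-distinct configurations and noting every other configuration is dominated by one of these after trivial bounds $|\rho_{ab}| \le 1$ on the surplus factors.
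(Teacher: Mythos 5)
There is a genuine gap in both parts, and it is quantitative, not just bookkeeping. For part (i), the per-configuration bound you ultimately rest the argument on --- that a multigraph sum with $e$ non-loop edges is $\lesssim m^{\#\{\text{vertices untouched by non-loops}\}}\|\cormat-\iden\|_F^{\,e}$ --- is false. Take the equicorrelated matrix $\rho_{pq}=\epsilon$ and the configuration in which the four pairs form a path on five distinct indices ($k=5$, $e=4$, no untouched vertex): the sum is of order $m^5\epsilon^4=m\,\|\cormat-\iden\|_F^4$, whereas your bound claims $\|\cormat-\iden\|_F^4$. The error comes from conflating $\sum_{p\neq q}|\rho_{pq}|$ with $\|\cormat-\iden\|_F$ and from ignoring that non-disjoint edges do not factorize; consequently the concluding arithmetic ``free vertices $\le k-(k-4)=4$, hence $O(m^4)\|\cormat-\iden\|_F^{k-4}$'' does not follow. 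The correct accounting, which is the paper's, is the route you considered and discarded: choose $k-4$ non-loop pairs that are pairwise \emph{vertex-disjoint} (this disjointness, not the mere existence of $k-4$ non-loops, is the combinatorial fact needed, and you never establish it), bound the remaining factors by $1$ and sum freely over the $8-k$ leftover values (so the free count is $8-k$, not $4$), and cost each kept factor by $\sum_{p\neq q}|\rho_{pq}|\le\sqrt{m(m-1)}\,\|\cormat-\iden\|_F=O(m)\|\cormat-\iden\|_F$; your Cauchy--Schwarz gave $m^{1/2}$ instead of $m$, which is why you judged it ``not tight enough,'' but with the correct factor it gives exactly $O(m^{8-k})\,O(m^{k-4})\,\|\cormat-\iden\|_F^{k-4}=O(m^4)\|\cormat-\iden\|_F^{k-4}$.

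For part (ii), the step you yourself flag as ``the main obstacle'' is precisely where the argument breaks. Pairing the factors one $\pi$-factor against one $\tau$-factor via $2|ab|\le a^2+b^2$ produces $2^4$ terms whose four squared edges need not cover all $k$ values and need not contain $k-4$ pairwise disjoint non-loop edges: e.g.\ with $k=8$, $\pi$ the canonical pairing and $\tau=(p_1,p_2,q_1,q_2,p_3,p_4,q_3,q_4)$, one term is $\rho_{p_1q_1}^2\rho_{q_1q_2}^2\rho_{p_3q_3}^2\rho_{q_3q_4}^2$, whose edge set has maximum matching $2$, far short of $k-4=4$; the crude bounds available to your scheme then give at best $m^2\|\cormat-\iden\|_F^{8}$ or $m^4\|\cormat-\iden\|_F^{4}$, neither of which is $O(\|\cormat-\iden\|_F^{8})$ unless $\|\cormat-\iden\|_F\gtrsim m$. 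So the assertion that ``the degree-$4$ structure guarantees enough disjoint pairs'' is not true for the terms your pairing creates (and the degrees are $2$ or $4$, not uniformly $4$). The paper's fix is structurally different: apply $2|ab|\le a^2+b^2$ \emph{once}, with $a$ the entire $\pi$-product and $b$ the entire $\tau$-product; each resulting term is a product of four squared correlations over a perfect matching of the eight index slots, and since the constraint $|\cup_{d}\{p_d,q_d\}|=k$ is invariant under renaming the eight summation indices, each such sum equals $\sum\prod_{d=1}^4\rho_{p_dq_d}^2$, i.e.\ \eqref{anotherwayfor2}; from there the same $k-4$ vertex-disjoint non-loop pairs, $\sum_{p\neq q}\rho_{pq}^2=\|\cormat-\iden\|_F^2$, and the $8-k$ free values give $O(m^{8-k})\|\cormat-\iden\|_F^{2(k-4)}$. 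Neither part actually needs \lemref{threebounds} as a base case once this is done.
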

\begin{proof} [Proof of \lemref{bddOnSums}]
We first note that for $(ii)$, By the inequality that $2|ab| \leq  a^2 + b^2$ for all $a, b \in \mathbb{R}$, we have
\[
\sum_{\substack{1 \leq p_d, q_d\leq m\\d = 1, \dots, 4\\ |\cup_{d = 1}^4 \{p_d, q_d\}| = k}}\prod_{d' = 1, 3, 5, 7}|\rho_{\pi_{d'} \pi_{d'+1}}\rho_{{\tau_{d'}} {\tau_{d'+1}}}| \leq 2^{-1}\sum_{\substack{1 \leq p_d, q_d\leq m\\d = 1, \dots, 4\\ |\cup_{d = 1}^4 \{p_d, q_d\}| = k}}\prod_{d = 1}^4 \rho_{p_d q_d}^2,
\]
hence to show $(ii)$ it suffices to show
\begin{equation} \label{anotherwayfor2}
\sum_{\substack{1 \leq p_d, q_d\leq m\\d = 1, \dots, 4\\ |\cup_{d = 1}^4 \{p_d, q_d\}| = k}}\prod_{d = 1}^4 \rho_{p_d q_d}^2 =   O(m^{8 - k}) \|\cormat - \iden\|_F^{2(k - 4)}
\end{equation}
Given  $k \in \{5, \dots , 8\}$, when $k$ of the indices $p_1, q_1, \dots, p_4, q_4$ are distinct, it must be the case that  there exist $k - 4$ pairs of $(p_d, q_d)$ such that all indices from these $k - 4$ pairs are distinct elements from $[m]$. Without lost of generality we can assume these $k - 4$ pairs to be $(p_1, q_1), \dots, (p_{k-4}, q_{k-4})$, which contains a total of $2k - 8$ distinct indices, and for proving $(i)$ and \eqref{anotherwayfor2} it suffices to  show, respectively,
\begin{equation} \label{firstk-4pairs}
\sum_{\substack{1 \leq p_1, q_1, \dots, p_4 , q_4\leq m\\ p_1, q_1 \dots, p_{k-4} , q_{k - 4}\text{  distinct} \\ |\cup_{d = 1}^4 \{p_d, q_d\}| = k }}\prod_{d  = 1}^4|\rho_{p_d q_d} | =  O(m^4) \|\cormat - \iden\|_F^{k - 4}
\end{equation}
and
\begin{equation} \label{firstk-4pairs2}
\sum_{\substack{1 \leq p_1, q_1, \dots, p_4 , q_4\leq m\\ p_1, q_1 \dots, p_{k-4} , q_{k - 4}\text{ distinct} \\ |\cup_{d = 1}^4 \{p_d, q_d\}| = k }}
\prod_{d = 1}^4 \rho_{p_d q_d}^2
=
O(m^{8 - k}) \|\cormat - \iden\|_F^{2(k - 4)}.
\end{equation}
As all the $\rho$'s are bounded in absolute value by $1$,  summing over the other $k - (2k - 8) = 8 - k$ indices different from $p_1, q_1, \dots, p_{k-4}, q_{k-4}$ results  in a $O(m^{8 - k})$ term which gives
\begin{multline}\label{theother8 - k}
\sum_{\substack{1 \leq p_1, q_1, \dots, p_4 , q_4\leq m\\ p_1, q_1 \dots, p_{k-4} , q_{k - 4}\text{  distinct} \\ |\cup_{d = 1}^4 \{p_d, q_d\}| = k }}\prod_{d  = 1}^4|\rho_{p_d q_d} |
 =
O(m^{8 - k}) \sum_{\substack{  1 \leq p_d, q_d \leq m\\ d = 1, \dots, k - 4 \\p_1, q_1, \dots, p_{k-4}, q_{k-4} \\\text{are distinct}}} \prod_{d = 1}^{k - 4} |\rho_{p_dq_d}|.
\end{multline}
%
% When $k$ of $p_1, \dots, $Since there are only finitely many ways in which $k$ of the indices $p_1, \dots, p_8$ can be distinct, by symmetry and the fact that all $\rho$'s are less than 1 in absolute value, without loss of generality it suffices to show
%\[
%\sum_{\substack{1 \leq p_1, \dots, p_8 \leq m\\ k \text{ of } p_1, \dots, p_8 \text{ being distinct} \\}}|\rho_{p_1 p_2} \rho_{p_3 p_4} \rho_{p_5 p_6} \rho_{p_7 p_8}| =  O(m^4) \|\cormat - \iden\|_F^{k - 4}.
%\]
Now  since
\[
\sum_{1 \leq p \not = q \leq m } |\rho_{pq}| \leq   O(m)\|\cormat - \iden\|_F
\]
by standard norm inequality, evaluating the sum on the right hand side of  \eqref{theother8 - k} we further obtain
\begin{equation*}
\sum_{\substack{1 \leq p_1, q_1, \dots, p_4 , q_4\leq m\\ p_1, q_1 \dots, p_{k-4} , q_{k - 4}\text{  distinct} \\ |\cup_{d = 1}^4 \{p_d, q_d\}| = k }}\prod_{d  = 1}^4|\rho_{p_d q_d} |
 =
O(m^{8 - k})O(m^{k - 4}) \|\cormat - \iden\|_F^{k - 4}
\end{equation*}
which is exactly \eqref{firstk-4pairs}. Similarly, summing over the other $ 8 - k$ indices different from $p_1, q_1, \dots, p_{k-4}, q_{k-4}$ on the left hand side of \eqref{firstk-4pairs2}  results  in a $O(m^{8 - k})$ term and hence
\[
\sum_{\substack{1 \leq p_1, q_1, \dots, p_4 , q_4\leq m\\ p_1, q_1 \dots, p_{k-4} , q_{k - 4}\text{ distinct} \\ |\cup_{d = 1}^4 \{p_d, q_d\}| = k }}
\prod_{d = 1}^4 \rho_{p_d q_d}^2
= O(m^{8-k})  \sum_{\substack{  1 \leq p_d, q_d \leq m\\ d = 1, \dots, k - 4 \\p_1, q_1, \dots, p_{k-4}, q_{k-4} \\\text{are distinct}}} \prod_{d = 1}^{k - 4} \rho_{p_dq_d}^2.
\] Since $\sum_{1 \leq p \not = q \leq m } \rho_{p q}^2 = \|\cormat - \iden\|_F^2$, we get \eqref{firstk-4pairs2} by continuing from the preceding display.
\end{proof}

\section{Proofs for \secref{BEsec}} \label{app:bddforBs}

\subsection{Proof of \eqref{S2est}-\eqref{S4est}}
First, we show the estimates  in \eqref{S2est}-\eqref{S4est}. Note that by Corollary~\ref{cor:momentfact},
\begin{align}
\mathbb{S}(k) &= \sum_{\substack{1 \leq p_d < q_d \leq m\\ d = 1, 2\\|\cup_{d  = 1}^2\{p_d, q_d\}| = k}}  (\rho_{p_1q_2} \rho_{q_1p_2} + \rho_{p_1p_2} \rho_{q_1q_2})^2\notag\\
&= \sum_{\substack{1 \leq p_d < q_d \leq m\\ d = 1, 2\\|\cup_{d  = 1}^2\{p_d, q_d\}| = k}}  (\rho_{p_1q_2}^2 \rho_{q_1p_2}^2 + \rho_{p_1p_2}^2 \rho_{q_1q_2}^2 + 2\rho_{p_1q_2} \rho_{p_2q_1} \rho_{p_1p_2} \rho_{q_1q_2})\label{analyze}
\end{align}
for $k = 2, 3, 4$. Also recall that $\rho_{pp} = 1$ for all $1 \leq p \leq m$. We will analyze the sum in \eqref{analyze} for different $k$.

\eqref{S2est}: When $k = 2$, with $p_d < q_d$ for $d = 1, 2$, it must be that $p_1 = p_2$ and $q_1 = q_2$, and hence from  \eqref{analyze}
\[
\mathbb{S}(2)=
  \sum_{1 \leq p< q \leq m}(1 + 2 \rho_{pq}^2  + \rho_{pq}^4) = \frac{m(m-1)}{2} + O(\|{\bf R}  - {\bf I}_m\|_F^2 ).
\]
since $\sum_{1 \leq p < q \leq m} \rho_{pq}^2 = 2^{-1}\|\cormat - \iden\|_F^2$ and $\rho_{pq}^4 \leq \rho_{pq}^2$.

\eqref{S3est}: When $k = 3$, one possible configuration of $\cup_{d  = 1}^2\{p_d, q_d\}$ as a set with cardinality $3$ is that
\begin{equation} \label{configfork3}
p_1 = p_2,  \ \ p_1 < q_1,  \ \  p_1 < q_2,   \ \ \text{ and }  \ \ q_1 \not= q_2.
\end{equation}
Taking a sum just over the terms in \eqref{analyze} whose indices $p_1, q_1, p_2, q_2$ satisfy the configuration \eqref{configfork3} we get
\begin{align*}
 &\sum_{\substack{1 \leq p_d < q_d \leq m\\ d = 1, 2\\\cup_{d  = 1}^2\{p_d, q_d\} \text{ as in \eqref{configfork3}}}} (\rho_{p_1q_2}^2 \rho_{q_1p_2}^2 + \rho_{p_1p_2}^2 \rho_{q_1q_2}^2 + 2\rho_{p_1q_2} \rho_{p_2q_1} \rho_{p_1p_2} \rho_{q_1q_2})\\
&= \sum_{\substack{1 \leq p_d < q_d \leq m\\ d = 1, 2\\\cup_{d  = 1}^2\{p_d, q_d\} \text{ as in \eqref{configfork3}}}} (\rho_{p_1q_2}^2 \rho_{p_1 q_1}^2 +  \rho_{q_1q_2}^2 + 2\rho_{p_1q_2} \rho_{p_1q_1} \rho_{q_1q_2}) \quad \text{by $\rho_{p_1 p_1} = 1$}\\
& \lesssim \sum_{ \substack{ 1\leq p_1, q_1, q_2 \leq m \\  p_1, q_1, q_2 \text{ distinct}}}(  \rho_{p_1q_1}^2 + |\rho_{p_1q_2} \rho_{p_1q_1} \rho_{q_1q_2}|)\\
& \lesssim m\|{\bf R} - {\bf I}_m\|_F^2 + \|{\bf R} - {\bf I}_m\|_F^4,
\end{align*}
where the second last inequality is true because we enlarged the set of indices $p_1, q_1, q_2$ we are summing over and used the fact that
\[
 \sum_{ \substack{ 1\leq p_1, q_1, q_2 \leq m \\  p_1, q_1, q_2 \text{ distinct}}} \rho_{p_1q_2}^2 \rho_{p_1 q_1}^2 \lesssim  \sum_{ \substack{ 1\leq p_1, q_1, q_2 \leq m \\  p_1, q_1, q_2 \text{ distinct}}}\rho_{p_1 q_1}^2
\]
since any $\rho^2_{pq}$ is less than $1$, and the last inequality follows from \eqref{5.2}  and that \[\sum_{ \substack{ 1\leq p_1, q_1, q_2 \leq m \\  p_1, q_1, q_2 \text{ distinct}}} \rho_{q_1 q_2}^2  \lesssim \sum_{1 \leq p_1 \leq m} \|\cormat - \iden\|_F^2
\lesssim m\|\cormat - \iden\|_F^2.
\]
The same estimates can be proved for other set configurations of $\cup_{d  = 1}^2\{p_d, q_d\}$ similar to the one in  \eqref{configfork3}. Since there are only finitely many  such configurations, we get the estimate in \eqref{S3est}.

%By enlarging the index set we are summing over and using symmetry among the indices we have
%\begin{align*}
%\mathbb{S}(3)
%&\notag = \sum_{\substack{1 \leq p < q \leq m\\ 1 \leq r < s \leq m \\|\{p, q\}\cup\{r, s\}| = 3}} \rho_{ps}^2 \rho_{qr}^2 + \rho_{pr}^2 \rho_{qs}^2 + 2\rho_{ps} \rho_{qr} \rho_{pr} \rho_{qs} \\
% &\lesssim  \sum_{\substack{ 1 \leq p, q , r \leq m \\ p, q, r \text{ distinct}}}
%\rho_{pq}^2 \rho_{pr}^2 + \rho_{pq}^2 + 2|\rho_{pq} \rho_{pr}\rho_{qr}|  \lesssim  m\|{\bf R} - {\bf I}_m\|_F^2 + \|{\bf R} - {\bf I}_m\|_F^4,
%\end{align*}
%where we have used \lemref{threebounds}$(iii)$ in the last inequality.

\eqref{S4est}: By considering different configurations for the set $\cup_{d  = 1}^2\{p_d, q_d\}$ with cardinality $4$, from \eqref{analyze} we have
% as in the proof for \eqref{S3est} again by enlarging the index set of summation and symmetry we have
\begin{align*}
\mathbb{S}(4) &\lesssim \sum_{\substack{1  \leq p_1 ,q_1 , p_2, q_2 \leq m\\  p_1 ,q_1 , p_2, q_2 \text{ distinct}}} \rho_{p_1q_1}^2 \rho_{p_2q_2}^2 +| \rho_{p_1q_2}\rho_{q_1p_2}\rho_{p_1p_2} \rho_{q_1q_2}| \\
& \lesssim \|{\bf R} - {\bf I}_m\|_F^4,
\end{align*}
where the last inequality used \eqref{5.1} and
\[
\sum_{\substack{1  \leq p_1 ,q_1 , p_2, q_2 \leq m\\  p_1 ,q_1 , p_2, q_2 \text{ distinct}}} \rho_{p_1q_1}^2 \rho_{p_2q_2}^2 \lesssim \|\cormat - \iden\|_F^2\sum_{ \substack{1 \leq p_1 , q_1 \leq m \\ p_1, q_1 \text{ distinct }}} \rho_{p_1q_1}^2 \lesssim \|\cormat - \iden\|_F^4.
\]
\subsection{Proof of \eqref{afterIsserlis}}
In fact, the strategy we used in proving \eqref{S3est} will also lead to a quick proof of  the estimates for $\mathbb{T}(k)$, $k = 5, \dots, 8$ in \eqref{afterIsserlis}. Be definition,
\[
\mathbb{T}(k)  = \sum_{\substack{1 \leq p_d < q_d \leq m\\d= 1, 2, 3, 4 \\ |\cup_{d = 1}^4 \{p_d, q_d\}| = k}} \mathbb{E}\left[ \prod_{d \in [4]}(X_{p_d}X_{q_d} - \rho_{p_dq_d})\right].
\]
By expanding the product $\prod_{d \in [4]}(X_{p_d}X_{q_d} - \rho_{p_dq_d})$ at the end of the above equation and taking expectation with  respect to \thmref{isserlis}, one can see that
\begin{equation} \label{Tkbdd}
\mathbb{T}(k) \lesssim \sum_{\substack{1 \leq p_d < q_d \leq m\\d= 1, 2, 3, 4 \\ |\cup_{d = 1}^4 \{p_d, q_d\}| = k}}  \sum_{\pi \in \mathcal{S}_8}
|\rho_{\pi_1 \pi_2}\rho_{\pi_3 \pi_4}\rho_{\pi_5 \pi_6}\rho_{\pi_7 \pi_8}|,
\end{equation}
where here we interpret $\pi = (\pi_1, \dots, \pi_8)$ as a permutation of the eight indices $p_1, q_1, \dots, p_4, q_4$. When the permutation $\pi = (p_1, q_1, p_2, q_2, p_3, q_3, p_4, q_4)$, we have
\begin{multline}
\sum_{\substack{1 \leq p_d < q_d \leq m\\d= 1, 2, 3, 4 \\ |\cup_{d = 1}^4 \{p_d, q_d\}| = k}} |\rho_{\pi_1 \pi_2}\rho_{\pi_3 \pi_4}\rho_{\pi_5 \pi_6}\rho_{\pi_7 \pi_8}|
%&= \sum_{\substack{1 \leq p_d < q_d \leq m\\d= 1, 2, 3, 4 \\ |\cup_{d = 1}^4 \{p_d, q_d\}| = k}} \prod_{d  =1}^4 |\rho_{p_d q_d}| \notag\\
\\
\leq \sum_{\substack{1 \leq p_d ,  q_d \leq m\\d= 1, 2, 3, 4 \\ \text{ and }\\ k \text{ of } p_1, q_1, \dots, p_4, q_4 \\\text{ are distinct}}} \prod_{d  =1}^4 |\rho_{p_d q_d}|  = O(m^4) \|\cormat - \iden\|_F^{k-4}\label{C3ineq}
\end{multline}
by \lemref{bddOnSums}$(i)$.
Although \eqref{C3ineq} is only proved for $\pi = (p_1, q_1, p_2, \dots, p_4, q_4)$, a same bound for all other permutations easily generalize, which gives our estimate in \eqref{afterIsserlis} in light of \eqref{Tkbdd}.

\subsection{Proof of \eqref{firstBrace}-\eqref{thirdBrace}}

\ \

\eqref{firstBrace}: We first write
\begin{equation} \label{rewritesumP1}
 \sum_{\substack{1\leq p_d < q_d \leq m \\d = 1, 2, 3, 4}} \mathbb{P}_1 =  \sum_{k = 5}^8 \left\{\sum_{\substack{1\leq p_d < q_d \leq m \\d = 1, 2, 3, 4 :\\ |\cup_{d = 1}^4\{p_d, q_d \}| = k}} \mathbb{P}_1 \right\}+ O(m^4)
\end{equation}
%since
%\[
%\sum_{\substack{1\leq p_d < q_d \leq m \\d = 1, 2, 3, 4 :\\ |\{p_d, q_d : d = 1, \dots, 4\}| \leq 4}} \mathbb{P}_1
%\]
%is
where the $O(m^4)$ term comes from a remaining sum of $O(m^4)$ many universally bounded terms when $|\cup_{d = 1}^4\{p_d, q_d \}|  \leq 4$. By the definition of $\mathbb{P}_1$ in \eqref{P1}, Corollary~\ref{cor:momentfact} and \lemref{bddOnSums}$(i)$, it can be seen that for each $k = 5, \dots, 8$,
\[
\sum_{\substack{1\leq p_d < q_d \leq m \\d = 1, 2, 3, 4 :\\ |\cup_{d = 1}^4\{p_d, q_d\}| = k}} \mathbb{P}_1 \lesssim  \sum_{\substack{1 \leq p_d ,  q_d \leq m\\d= 1, 2, 3, 4 \\  |\cup_{d = 1}^4\{p_d, q_d\}| = k}} \prod_{d  =1}^4 |\rho_{p_d q_d}| = O(m^4) \|\cormat - \iden\|_F^{k - 4},
\]
giving \eqref{firstBrace} in light of \eqref{rewritesumP1}.
% On multiplying out the expression in \eqref{P1}, $P_1$ is a sum of finitely many terms each having the form
%\[
%\prod_{d = 1}^4\rho_{\pi(p_d)\pi(q_d)}
%\]
%for some $\pi \in \mathcal{S}_8$. Hence, by the triangular inequality and \lemref{bddOnSums}$(i)$,  we have
%\begin{equation} \label{1stbddonP1}
%\left|\sum_{\substack{1\leq p_d < q_d \leq m \\d = 1, 2, 3, 4 :\\ |\{p_d, q_d : d = 1, \dots, 4\}| = k}} P_1  \right|\leq   O(m^4) \|\cormat - \iden\|_F^{k - 4}
%\end{equation}
%for each $k = 5, \dots, 8$.
%, where we obtain the second equality by applying \lemref{bddOnSums}$(i)$, observing that from $P_1$ is a finite sum of terms each having the form
%\[
%\prod_{d = 1}^4\rho_{\pi(p_d)\pi(q_d)}
%\]
%and the triangular inequality.

\eqref{secondBrace} and \eqref{thirdBrace}:  Similar to \eqref{rewritesumP1} for $u = 1, 2, 3$, we can write
\begin{equation}\label{P1P2+P1P3+P1P4}
\sum_{\substack{1\leq p_d < q_d \leq m \\d = 1, 2, 3, 4}} \mathbb{P}_1 \mathbb{P}_u =
\sum_{k= 4}^8\sum_{\substack{1\leq p_d < q_d \leq m \\d = 1, 2, 3, 4 \\ |\cup_{d = 1}^4 \{p_d, q_d\}| =  k}}  \mathbb{P}_1 \mathbb{P}_u  + O(m^3).
\end{equation}
By Corollary~\ref{cor:momentfact} we get that $\mathbb{P}_1 \mathbb{P}_u$ is a finite sum of terms  each having the form
\[
\prod_{d'= 1, 3, 5, 7}\rho_{\pi_{d'} \pi_{d'+1}} \prod_{d' = 1, 3, 5, 7}\rho_{\tau_{d'} \tau_{d'+1}}
\]
for $\pi = (\pi_1, \dots, \pi_8)$ and $\tau =  (\tau_1, \dots, \tau_8)$ that are certain permutations of the $8$ indices $p_1, q_1, \dots, p_4, q_4$. As such, by \lemref{bddOnSums}$(ii)$, for a given $k = 5, \dots, 8$,
\begin{align} \label{k5on}
\sum_{k = 5}^8\sum_{\substack{1\leq p_d < q_d \leq m \\d = 1, 2, 3, 4 \\ |\cup_{d = 1}^4\{p_d, q_d\} | =  k}} \mathbb{P}_1 \mathbb{P}_u &\lesssim\sum_{k = 5}^8 O(m^{8 - k}) \|\cormat - \iden\|_F^{2(k - 4)} \notag \\
&= \sum_{k = 0}^3 O(m^k)\|\cormat - \iden\|_F^{2(4 - k)}.
\end{align}
%Hence we get that
%\begin{multline} \label{k5on}
%\sum_{k= 5}^8\sum_{\substack{1\leq p_d < q_d \leq m \\d = 1, 2, 3, 4 \\ |\cup_{d = 1}^4\{p_d, q_d\}| =  k}} \mathbb{P}_1 \mathbb{P}_u =\\
% \sum_{k = 5}^8O(m^{8 - k}) \|\cormat - \iden\|_F^{2(k - 4)} = \sum_{k = 0}^3 O(m^k)\|\cormat - \iden\|_F^{2(4 - k)}.
%\end{multline}
Given \eqref{P1P2+P1P3+P1P4} and \eqref{k5on} it remains to show
\begin{equation} \label{m4thing}
\sum_{\substack{1\leq p_d < q_d \leq m \\d = 1, 2, 3, 4 \\ |\cup_{d  = 1}^4 \{p_d, q_d \}| = 4}}  \mathbb{P}_1^2 =  {m \choose 2} {m-2 \choose 2} + O(m^3) \|\cormat - \iden\|_F
\end{equation}
and, for $u = 2, 3$,
\begin{equation} \label{u2u3thing}
\sum_{\substack{1\leq p_d < q_d \leq m \\d = 1, 2, 3, 4 \\ |\cup_{d  = 1}^4 \{p_d, q_d \}| = 4}}  \mathbb{P}_1 \mathbb{P}_u = O(m^3)\|\cormat - \iden\|_F
\end{equation}
to prove \eqref{secondBrace} and \eqref{thirdBrace}. To that end we make the following claim:

 \ \

\emph{Claim}. Suppose $\pi = (\pi_1, \dots, \pi_8)$ and $\tau = (\tau_1, \dots, \tau_8)$ are two given permutations of eight indices $p_1, q_1, \dots, p_4, q_4 \in [m]$. Then
\begin{equation} \label{claimeqt}
\sum_{\substack{1 \leq p_d < q_d \leq m \\ d = 1 , \dots, 4 \\ |\cup_{d = 1}^4 \{p_d, q_d\}| = 4}}
\left(\prod_{d'  = 1, 3, 5, 7} \rho_{\pi_{d'} \pi_{d' +1} }\rho_{\tau_{d'} \tau_{d' +1} } \right) = O(m^3) \|\cormat - \iden\|_F
\end{equation}
unless, as elements in $[m]$,
\begin{equation} \label{pi'tau'equal}
\pi_{d'} =  \pi_{d'+1},  \ \  \tau_{d'} =  \tau_{d'+1}
\end{equation}
 for all  $d'= 1, 3, 5, 7$ when $1 \leq p_d < q_d \leq m$ for all $d = 1, \dots, 4$ and $|\cup_{d = 1}^4 \{p_d, q_d\}|= 4$.

\ \

The proof of this claim will be left till the end of this section. Using this, we will first show \eqref{u2u3thing} for $u=2$ while the proof for $u =3$ follows similarly and is thus omitted.

%we can show that
%\begin{equation} \label{P1P3}
% \sum_{\substack{1\leq p_d < q_d \leq m \\d = 1, 2, 3, 4 \\\cup_{d  = 1}^4 |\{p_d, q_d \}| = 4}} \mathbb{P}_1\mathbb{P}_3 = O(m^3) \|\cormat - \iden\|_F ,
%\end{equation}
%\begin{equation} \label{P1P4}
%  \sum_{\substack{1\leq p_d < q_d \leq m \\d = 1, 2, 3, 4 \\\cup_{d  = 1}^4 |\{p_d, q_d \}| = 4}} \mathbb{P}_1\mathbb{P}_4 = O(m^3) \|\cormat - \iden\|_F,
%\end{equation}
%
%and
%\begin{equation} \label{P1P1}
% \sum_{\substack{1\leq p_d < q_d \leq m \\d = 1, 2, 3, 4 \\\cup_{d  = 1}^4 |\{p_d, q_d \}| = 4}} \mathbb{P}_1^2= {m \choose 2}{m - 2 \choose 2} + O(m^3) \|\cormat - \iden\|_F
%\end{equation}
%which together give \eqref{m4thing}.
By Corollary~\ref{cor:momentfact}, on expansion we get that the $  \sum_{\substack{1\leq p_d < q_d \leq m \\d = 1, 2, 3, 4 \\\cup_{d  = 1}^4 |\{p_d, q_d \}| = 4}} \mathbb{P}_1\mathbb{P}_2$ is  a finite sum of terms each having the form as in the left hand side of \eqref{claimeqt} with $\pi$ and $\tau$ NOT satisfying the description in \eqref{pi'tau'equal} of the claim. For example, by Corollary~\ref{cor:momentfact}, on expansion
\[
\mathbb{P}_1 = (\rho_{p_1q_2} \rho_{q_1p_2} + \rho_{p_1p_2} \rho_{q_1q_2})(\rho_{p_3q_4} \rho_{q_3p_4} + \rho_{p_3p_4} \rho_{q_3q_4})  = \rho_{p_1p_2} \rho_{q_1q_2} \rho_{p_3p_4} \rho_{q_3q_4} + \cdots
\]
\[
\mathbb{P}_2 = (\rho_{p_1q_3} \rho_{q_1p_3} + \rho_{p_1p_3} \rho_{q_1q_3})(\rho_{p_2q_4} \rho_{q_2p_4} + \rho_{p_2p_4} \rho_{q_2q_4})  = \rho_{p_1p_3} \rho_{q_1q_3}\rho_{p_2p_4} \rho_{q_2q_4} + \cdots,
\]
which leads to
\begin{equation} \label{eexpand}
 \sum_{\substack{1\leq p_d < q_d \leq m \\d = 1, 2, 3, 4 \\ |\cup_{d  = 1}^4 \{p_d, q_d \}| = 4}} \mathbb{P}_1\mathbb{P}_2 =  \sum_{\substack{1\leq p_d < q_d \leq m \\d = 1, 2, 3, 4 \\|\cup_{d  = 1}^4 \{p_d, q_d \}| = 4}} \left(\prod_{d = 1, 3, 5, 7} \rho_{\pi'_d \pi'_{d+1}}\rho_{\tau'_d \tau'_{d+1}}  \right)+ \cdots,
\end{equation}
where
\begin{equation} \label{pi'}
\pi' = (\pi'_1, \dots, \pi'_8) :=  (p_1, p_2, q_1, q_2, p_3, p_4, q_3, q_4)
\end{equation}
and
 \begin{equation}\label{tau'}
\tau' = (\tau'_1, \dots, \tau'_8):= (p_1, p_3, q_1, q_3, p_2, p_4, q_2, q_4)
\end{equation}
 and similar terms are omitted in $\cdots$ above. Note that when $|\cup_{d = 1}^4 \{p_d, q_d\}| = 4$ and $1 \leq p_d< q_d \leq m$, there must be a pair among   $\{(\pi'_d, \pi'_{d+1}), (\tau'_d, \tau'_{d+1}): d  = 1, 3, 5, 7\}$ that contains two distinct elements in $[m]$ due to a mismatch of the permutations $\pi'$ and $\tau'$: For if not in consideration of $\pi'$ it must be the case that $p_1 = p_2$, $q_1 = q_2$, $p_3 = p_4$ and $q_3 = q_3$ with $p_1, q_1, p_3, p_4$ being four distinct elements in $[m]$, but this will imply $\tau'_1 = p_1 \not= p_3 = \tau'_2$, a contradiction.  By the claim above the first term on the right hand side of \eqref{eexpand} equals to $O(m^3) \|\cormat - \iden\|_F$, where as the finitely many omitted terms $\cdots$ in \eqref{eexpand} can also be similarly bounded and \eqref{u2u3thing}  is proved.

We now show \eqref{m4thing}, again with Corollary~\ref{cor:momentfact}, we expand
\begin{multline}
\sum_{\substack{1\leq p_d < q_d \leq m \\d = 1, 2, 3, 4 \\ |\cup_{d  = 1}^4 \{p_d, q_d \}| = 4}}\mathbb{P}_1^2= \sum_{\substack{1\leq p_d < q_d \leq m \\d = 1, 2, 3, 4 \\  |\cup_{d  = 1}^4 \{p_d, q_d \}| = 4}}  \Bigl\{(\rho_{p_1p_2} \rho_{q_1q_2} \rho_{p_3p_4}\rho_{q_3q_4})^2 + \\
 (\rho_{p_1p_2} \rho_{q_1q_2} \rho_{p_3q_4}\rho_{q_3p_4})^2 +(\rho_{p_1q_2} \rho_{q_1p_2} \rho_{p_3p_4}\rho_{q_3q_4})^2 +(\rho_{p_1q_2} \rho_{q_1p_2} \rho_{p_3q_4}\rho_{q_3p_4})^2
\Bigr\} + \cdots \label{withOmitted},
\end{multline}
where we leave it to the reader to check that the omitted terms in $\cdots$ of \eqref{withOmitted} is of order $O(m^3)\|\cormat - \iden\|_F$ due to mismatch of permutations as in \eqref{pi'} and \eqref{tau'}. In fact, summing over the three terms on the second line of \eqref{withOmitted} also contribute a term of order $O(m^3)\|\cormat - \iden\|_F$: For example, summing over the last term on the second line of \eqref{withOmitted} equals
\begin{equation} \label{sumNoMismatch}
\sum_{\substack{1\leq p_d < q_d \leq m \\d = 1, 2, 3, 4 \\ |\cup_{d  = 1}^4 \{p_d, q_d \}| = 4}}\left(\prod_{d' = 1, 3, 5, 7}\rho_{\tilde{\pi}_{d'} \tilde{\pi}_{{d'}+1}}^2 \right)
\end{equation}
with
\[
\tilde{\pi} = (\tilde{\pi}_1, \dots, \tilde{\pi}_8) = (p_1, q_2, q_1, p_2,  p_3, q_4,q_3, p_4).
\]
When $ 1 \leq p_d < q_d \leq m $ and $|\cup_{d = 1}^4 \{p_d, q_d\}| = 4$, we cannot have $\tilde{\pi}_{d'} = \tilde{\pi}_{d'+1}$ for all $d'  = 1, 3, 5, 7$ and hence by the previous claim \eqref{sumNoMismatch} is of order $O(m^3) \|\cormat- \iden\|_F$. Hence it remains to show that summing over the terms on the first line of \eqref{withOmitted} gives
\begin{equation} \label{sumOverFirstTermwithOmitted}
\sum_{\substack{1\leq p_d < q_d \leq m \\d = 1, 2, 3, 4 \\  |\cup_{d  = 1}^4 \{p_d, q_d \}| = 4}} (\rho_{p_1p_2} \rho_{q_1q_2} \rho_{p_3p_4} \rho_{q_3q_4})^2 = {m \choose 2} {m - 2\choose 2} + O(m^3) \|\cormat - \iden\|_F
\end{equation}
 When $|\cup_{d = 1}^4 \{p_d, q_d\}| = 4$ with $p_d < q_d$ for all $d = 1, \dots, 4$, as a set $\cup_{d = 1}^4 \{p_d, q_d\}$ can take the configuration
\begin{equation} \label{m4config}
p_1 = p_2 ,  \ \ q_1 = q_2,  \ \ p_3 = p_4, \ \ q_3 = q_4.
\end{equation}
When \eqref{m4config} is true, $ \rho_{p_1p_2}^2 \rho_{q_1q_2}^2 \rho_{p_3p_4}^2 \rho_{q_3q_4}^2  = 1$, and hence
\begin{align}
&\sum_{\substack{1\leq p_d < q_d \leq m \\d = 1, 2, 3, 4 \\\cup_{d  = 1}^4 |\{p_d, q_d \}| = 4}} \rho_{p_1p_2}^2 \rho_{q_1q_2}^2 \rho_{p_3p_4}^2 \rho_{q_3q_4}^2\notag\\ &= \sum_{\substack{1 \leq p_3 < q_3 \leq m\\  \{p_1, q_1\}\cap\{p_3, q_3\} = \emptyset}} \sum_{1 \leq p_1 < q_1 \leq m} 1  + \sum_{\substack{1\leq p_d < q_d \leq m \\d = 1, 2, 3, 4 \\|\cup_{d  = 1}^4 \{p_d, q_d \}| = 4 \\ \cup_{d  = 1}^4 \{p_d, q_d \} \text{ NOT as in } \eqref{m4config}}} \rho_{p_1p_2}^2 \rho_{q_1q_2}^2 \rho_{p_3p_4}^2 \rho_{q_3q_4}^2\notag\\
&= {m \choose 2} {m -2 \choose 2} +  \sum_{\substack{1\leq p_d < q_d \leq m \\d = 1, 2, 3, 4 \\|\cup_{d  = 1}^4 \{p_d, q_d \}| = 4 \\ \cup_{d  = 1}^4 \{p_d, q_d \} \text{ NOT as in } \eqref{m4config}}} \rho_{p_1p_2}^2 \rho_{q_1q_2}^2 \rho_{p_3p_4}^2 \rho_{q_3q_4}^2. \label{notandyes}
\end{align}
For any configurations of the set $\cup_{d = 1}^4 \{p_d, q_d\}$ other than \eqref{m4config}, one of \begin{inparaenum} \item $p_1 \not = p_2$,  \item $q_1 \not = q_2$,  \item $p_3 \not = p_4$ or  \item $q_3 \not = q_4$ \end{inparaenum} must be true. For example, one such configuration is
\begin{equation} \label{otherconfig}
p_1 < p_2 =  q_1 < q_2 =  p_3 < p_4 =  q_3 < q_4.
\end{equation}
For this particular configuration, $(i)$ $p_1 \not = p_2$ is true.
Then we leave it to the reader to verify that by the same line of reasoning as in the proof of the claim below, we can show
\[
 \sum_{\substack{1\leq p_d < q_d \leq m \\d = 1, 2, 3, 4 \\|\cup_{d  = 1}^4 \{p_d, q_d \}| = 4 \\ \cup_{d  = 1}^4 \{p_d, q_d \} \text{ as in } \eqref{otherconfig}}}  \rho_{p_1p_2}^2 \rho_{q_1q_2}^2 \rho_{p_3p_4}^2 \rho_{q_3q_4}^2 = O(m^3) \|\cormat - \iden\|_F,
\]
where similar bounds can in fact be proved for all configurations of $\cup_{d = 1}^4 \{p_d , q_d\}$ other than \eqref{m4config}. This, togethers with \eqref{notandyes}, leads to \eqref{sumOverFirstTermwithOmitted}.

%
%\begin{multline} \label{otherconfigbound}
% \sum_{\substack{1\leq p_d < q_d \leq m \\d = 1, 2, 3, 4 \\|\cup_{d  = 1}^4 \{p_d, q_d \}| = 4 \\ \cup_{d  = 1}^4 \{p_d, q_d \} \text{ as in } \eqref{otherconfig}}}  \rho_{p_1p_2}^2 \rho_{q_1q_2}^2 \rho_{p_3p_4}^2 \rho_{q_3q_4}^2 \\
%\leq  \sum_{\substack{1\leq p_d < q_d \leq m \\d = 1, 2, 3, 4 \\|\cup_{d  = 1}^4 \{p_d, q_d \}| = 4 \\ \cup_{d  = 1}^4 \{p_d, q_d \} \text{ as in } \eqref{otherconfig}}}  |\rho_{p_1p_2}| \\ \leq O(m^2) \sum_{1 \leq p_1 < p_2 \leq m} |\rho_{pq}| \leq O(m^3 ) \|\cormat - \iden\|_F,
%\end{multline}
%where the first inequality is true since all $\rho$'s are bounded by $1$ in absolute value, the second inequality comes from summing the other $2$ indices different from $p_1, p_2$ which splits out a $O(m^2)$ term, and the third inequality comes form usual norm inequality. We also emphasize that the last inequality crucially relies on that configuration \eqref{otherconfig} satisfies $p_1 \not = p_2$. It can then be seen that for any other configuration different from \eqref{m4config}, a bound like \eqref{otherconfigbound} is true, and since there are only finitely many such configurations, from \eqref{notandyes} we get the bound
%\begin{equation} \label{bddfirsttermexpand}
%\sum_{\substack{1\leq p_d < q_d \leq m \\d = 1, 2, 3, 4 \\\cup_{d  = 1}^4 |\{p_d, q_d \}| = 4}} \rho_{p_1p_2}^2 \rho_{q_1q_2}^2 \rho_{p_3p_4}^2 \rho_{q_3q_4}^2 \leq  {m \choose 2} {m -2 \choose 2} +  O(m^3 ) \|\cormat - \iden\|_F.
%\end{equation}
%for the first term in \eqref{withOmitted}.

\begin{proof} [Proof of the claim]  Suppose \eqref{pi'tau'equal} is not true for some $d' \in \{1, 3, 5, 7\}$, and without loss of generality we assume $\pi_1 \not =   \pi_2$. Since $|\rho_{pq}| \leq 1$ for all $1 \leq p, q \leq m$, we have
\begin{align*}
&\sum_{\substack{1 \leq p_d < q_d \leq m \\  d = 1 , \dots, 4\\ |\cup_{d = 1}^4 \{p_d, q_d\}| = 4}}
\prod_{d'  = 1, 3, 5, 7}| \rho_{\pi_{d'} \pi_{d' +1} }\rho_{\tau_{d'} \tau_{d' +1} }| \leq \sum_{\substack{1 \leq \pi_{d} ,   \tau_{d'}  \leq m \\  d = 1 , \dots, 8\\ \pi_1 \not = \pi_2 \\ |\cup_{d = 1}^8 (\{\pi_{d}\} \cup  \{\tau_{d}\} ) | = 4}}\prod_{d'  = 1, 3, 5, 7}| \rho_{\pi_{d'} \pi_{d' +1} }\rho_{\tau_{d'} \tau_{d' +1} }|  \\
 &\leq \sum_{\substack{1 \leq \pi_{d} ,   \tau_{d}  \leq m ,  d = 1 , \dots, 8\\ \pi_1 \not = \pi_2 \\ |\cup_{d = 1}^8 (\{\pi_{d}\} \cup  \{\tau_{d}\} ) | = 4}}| \rho_{\pi_1 \pi_2}| = O(m^2 ) \sum_{ \substack{1 \leq  \pi_1 \not= \pi_2\leq m }} | \rho_{\pi_1 \pi_2}|= O(m^3) \|\cormat - \iden\|_F, \\
\end{align*}
aa desired.
\end{proof}

\section{Proofs for \secref{varbddsec}}  \label{app:ProofvarBdd}

Before finishing the proofs for \secref{varbddsec}, we first give the definition of the kernel $h_{3, pq}$ as mentioned in the main text:

\begin{align*}
&h_{3, pq}({\bf X}_{ pq, i}, {\bf X}_{ pq, j}, {\bf X}_{ pq, k}, {\bf X}_{ pq, l})\\
&:=  \underbrace{n^{-4} {n \choose 4}}_{O(1)} \sum_{\pi \in \mathcal{S}_4}  \Biggl\{ (X_{p\pi(i)}^2 - 1)(X_{p\pi(j)}^2 - 1)(X_{p\pi(k)} X_{q\pi(k)} - \rho_{pq} )(X_{p\pi(l)} X_{q\pi(l)} - \rho_{pq}) \Biggl\} \notag\\
&+ \underbrace{\frac{n^{-4}}{ {n  - 3\choose 1}}{n \choose 4}}_{O(n^{-1})}\sum_{\substack{ \{i', j', k'\}\\\subset \{i, j, k, l\} \\ i', j', k' \text{distinct}}} \sum_{\pi \in \mathcal{S}_3} \Biggl\{ (X_{p \pi (i')}X_{q \pi (i')} - \rho_{pq})^2(X_{p \pi (j')}^2 - 1)(X_{p \pi (k')}^2 - 1)  \notag\\
&\hspace{2cm} +
(X_{p \pi (i')}^2 - 1)^2(X_{p\pi ( j')}X_{q \pi (j')} - \rho_{pq})(X_{p \pi (k')}X_{q \pi (k')} - \rho_{pq})
  \notag\\
& \hspace{2 cm}+ 4 (X^2_{p \pi(i')} - 1)( X_{p \pi(i')} X_{q \pi(i')} - \rho_{pq})(X^2_{p\pi(j')} - 1)( X_{p \pi(k')} X_{q \pi(k')} - \rho_{pq})\Biggl\}\notag  \\
& + \underbrace{\frac{n^{-4}}{{n-2\choose 2}}{n \choose 4}}_{O(n^{-2})} \sum_{\substack{\{i',j'\}\\ \subset \{i, j, k , l\} \\ i', j' \text{distinct}}}\sum_{\pi \in \mathcal{S}_2}  \Biggl\{(X_{p \pi(i')}^2 -  1)^2( X_{p \pi(j')}X_{q \pi(j')} - \rho_{pq})^2 \\
&\hspace{2cm}+
 2(X_{p \pi(i')}^2 - 1)(X_{p\pi( j')}^2 - 1)(X_{p\pi( j')}X_{q \pi(j')} - \rho_{pq})^2\notag \\
& \hspace{2cm}+ 2(X_{p \pi(i')}X_{q\pi( i')} - \rho_{pq})(X_{p \pi(j')}X_{q \pi(j')} - \rho_{pq})( X_{p\pi( j')}^2 - 1)^2  \notag \\
&\hspace{2cm}+ 2 (X_{p \pi(i')}^2 - 1)(X_{p\pi( i')}X_{q\pi( i')} - \rho_{pq})  (X_{p\pi( j')}^2 - 1)(X_{p \pi(j')}X_{q \pi(j')} - \rho_{pq})  \Biggr\}\notag \\
&+  \underbrace{\frac{n^{-4}}{{n-1\choose 3}}{n \choose 4}}_{O(n^{-3})} \sum_{i' \in \{i, j, k , l\}}\left\{ (X_{pi'}^2 - 1)^2 (X_{pi'}X_{qi'} - \rho_{pq})^2 \right\}\notag
\end{align*}

We now proceed with the remaining proofs.

\begin{proof}[Proof for \lemref{II_2bdd}]
Note that by definition,
\begin{multline} \label{II_2_rewrite}
II_2 = \sum_{1 \leq p < q \leq m} \bar{S}_{pp} \bar{S}_{qq} \bar{S}_{pq}^2+ \sum_{\substack{{\boldsymbol \lambda} \in \mathbb{N}^3_{\geq 0}:  \\1 \leq |{\boldsymbol \lambda}| \leq 4  \\ \lambda_3 \not = 2 }} \left \{\sum_{1 \leq p < q \leq m}\frac{\partial^{\boldsymbol \lambda} f(1, 1, \rho_{pq})}{{\boldsymbol \lambda}!} \bar{S}_{pp}^{\lambda_1} \bar{S}_{qq}^{\lambda_2} \bar{S}_{pq}^{\lambda_3} \right\}.
\end{multline}
Since there are only finitely many ${\boldsymbol \lambda}$ we are summing over for the second term in  \eqref{II_2_rewrite}, by the general fact that
$
2 |ab| \leq a^2 + b^2
$
, it suffices to show that, for $ {\boldsymbol \lambda } = (\lambda_1, \lambda_2,\lambda_3)$ with $1 \leq |{\boldsymbol \lambda}| \leq 4 $ and  $\lambda_3  \not=   2$, the quantities
\begin{equation} \label{termforeachlambda}
\mathbb{E}\left[\left(\sum_{1 \leq p < q \leq m}\frac{\partial^{\boldsymbol \lambda} f(1, 1, \rho_{pq})}{{\boldsymbol \lambda}!} \bar{S}_{pp}^{\lambda_1} \bar{S}_{qq}^{\lambda_2} \bar{S}_{pq}^{\lambda_3} \right)^2\right] ,
%\\ O(n^{-|{\boldsymbol \lambda}|})\left(\sum_{\substack{1 \leq p < q \leq m \\1 \leq r < s \leq m}}\frac{\partial^{\boldsymbol \lambda} f(1, 1, \rho_{pq})\partial^{\boldsymbol \lambda} f(1, 1, \rho_{rs})}{ ({\boldsymbol \lambda}!)^2} \right),
\end{equation}
as well as
\begin{equation} \label{termfor112}
\mathbb{E}\left[ \left( \sum_{1 \leq p < q \leq m} \bar{S}_{pp} \bar{S}_{qq} \bar{S}_{pq}^2\right)^2 \right],
\end{equation}
can be bounded by the right hand side of \eqref{bddinII_2bdd} up to some multiplicative constants.  We will first show it for \eqref{termforeachlambda} case by case according to the multi-index degree of  $\boldsymbol \lambda$. The arguments rely on the fact that,  by \lemref{df},  it must be true that
\begin{equation} \label{bddbyrhoorrho2}
\left| \partial^{\boldsymbol \lambda} f(1, 1, \rho_{pq})  \right|\leq C | \rho_{pq}| \quad \text{ when } \quad \lambda_3 = 1
\end{equation}
and
\begin{equation}\label{bddbyrhoorrho2'}
 \left| \partial^{\boldsymbol \lambda} f(1, 1, \rho_{pq})  \right|\leq C  \rho_{pq}^2 \quad \text{ when } \quad \lambda_3 = 0
\end{equation}
for some constant $C > 0$. Consider $3$ cases:

$|{\boldsymbol \lambda}| = 3$ or $4$:
%On multiplying out the product in $\mathbb{E}[\cdot]$ and applying blah,  it must be true that   \eqref{termforeachlambda} can be bounded by
%\[
% O(n^{-|{\boldsymbol \lambda}|})\sum_{\substack{1 \leq p < q \leq m\\ 1 \leq r < s \leq m}}{\partial^{\boldsymbol \lambda} f(1, 1, \rho_{pq})} {\partial^{\boldsymbol \lambda} f(1, 1, \rho_{rs})}
%\]
With the facts in  \eqref{bddbyrhoorrho2} and  \eqref{bddbyrhoorrho2'}, with \lemref{prodOrderLemma}, \eqref{termforeachlambda} is less than
\begin{equation}
O(n^{-|{\boldsymbol \lambda}|}) \sum_{\substack{1 \leq p < q \leq m\\ 1 \leq r < s \leq m}} |\rho_{pq}| |\rho_{rs}| \quad \text{ or } \quad O(n^{-|{\boldsymbol \lambda}|}) \sum_{\substack{1 \leq p < q \leq m\\ 1 \leq r < s \leq m}} |\rho_{pq}|^2 |\rho_{rs}|^2.
\end{equation}
Respectively, by properties of norms they can be estimated by
\[
O(n^{-|{\boldsymbol \lambda}|}m^2) \|\cormat - \iden\|_F^2 \quad \text{ and } \quad O(n^{-|{\boldsymbol \lambda}|}) \|\cormat - \iden\|_F^4,
\]
which are both less than the right hand side of  \eqref{bddinII_2bdd} up to constants since $|\boldsymbol \lambda| = 3$ or $4$.
%Since $\sum_{1 \leq p < q \leq m}|\rho_{pq}|^2 = 2^{-1}\|R- I\|_F^2$ and $\sum_{1 \leq p < q \leq m}|\rho_{pq}| \leq  O(m)\|R- I\|_F$, we can express these estimates as
%\[
%O(n^{-|{\boldsymbol \lambda}|})  \|R- I\|_F^4 \quad \text{ or } \quad  O(n^{-|{\boldsymbol \lambda}|} m^2)\|R- I\|_F^2,
%\]
%either of which can be bounded by the right hand side of \eqref{termforeachlambda} since $|{\boldsymbol \lambda}| = 3$ or $4$.

$|{\boldsymbol \lambda}| = 1$: The only ${\boldsymbol \lambda} \in \mathbb{N}^3_{\geq 0}$ with $|{\boldsymbol \lambda}| = 1$ and $\lambda_3 \not = 2$ are $(1, 0, 0)$, $(0, 1, 0)$, $(0, 0, 1)$. When $\lambda_3 = 0$, by \eqref{bddbyrhoorrho2'} and \lemref{prodOrderLemma} the second moment quantity in \eqref{termforeachlambda} is bounded by
\[
O(n^{-1}) \sum_{\substack{1 \leq p < q \leq m\\ 1 \leq r < s \leq m}} \rho^2_{pq}\rho^2_{rs} = O(n^{-1})\|\cormat - \iden\|_F^4,
\]
less than the right hand side of \eqref{bddinII_2bdd}. When ${\boldsymbol \lambda} = (0, 0, 1)$, by \lemref{df},   \eqref{termforeachlambda}  equals
\begin{align}
\mathbb{E}\left[\left( \sum_{1\leq p < q \leq m}2 \rho_{pq} \bar{S}_{pq} \right)^2\right] &=4 \sum_{\substack{1\leq p < q \leq m \\ 1\leq r < s \leq m}}
\rho_{pq}\rho_{rs} \mathbb{E}[\bar{S}_{pq}\bar{S}_{rs}]. \notag\\
&= 4n^{-1}\left(\sum_{\substack{1\leq p < q \leq m \\ 1\leq r < s \leq m}}
\rho_{pq}\rho_{rs} \rho_{ps} \rho_{qr} +\sum_{\substack{1\leq p < q \leq m \\ 1\leq r < s \leq m}}
\rho_{pq}\rho_{rs} \rho_{pr} \rho_{qs}\right) \notag\\
&= O(n^{-1})(\|\cormat - \iden\|_F^4 + \|\cormat - \iden\|_F^2)\label{3rd2ndexpand},
\end{align}
%By blah, \eqref{1st2ndexpand} and \eqref{2nd2ndexpand} are both bounded by $O(n^{-1})\|R - I\|_F^4$. Since
where the second equality comes from the fact that
\begin{align*}
\mathbb{E}[\bar{S}_{pq}\bar{S}_{rs}] &= n^{-1} \mathbb{E}[( X_pX_q- \rho_{pq})( X_rX_s- \rho_{rs})] = n^{-1}( \rho_{ps} \rho_{qr} + \rho_{pr} \rho_{qs})
\end{align*}
due to the i.i.d.'ness of samples and Corollary~\ref{cor:momentfact}. To show the last equality, by exploiting symmetry it is easy to see that
\begin{multline} \label{whatshouldbetheno}
\sum_{\substack{1\leq p < q \leq m \\ 1\leq r < s \leq m}}
\rho_{pq}\rho_{rs} \rho_{ps} \rho_{qr} +\sum_{\substack{1\leq p < q \leq m \\ 1\leq r < s \leq m}}
\rho_{pq}\rho_{rs} \rho_{pr} \rho_{qs} \\
 \lesssim \sum_{k = 2}^4 \sum_{\substack{1\leq p , q, r, s \leq m \\ |\{p\} \cup \{q\} \cup \{r\} \cup \{s\} | = k}} |\rho_{pq}\rho_{rs} \rho_{pr} \rho_{qs}|.
\end{multline}
Observe that
\begin{align*}
 \sum_{\substack{1\leq p , q, r, s \leq m \\ |\{p\} \cup \{q\} \cup \{r\} \cup \{s\} | = 2}} |\rho_{pq}\rho_{rs} \rho_{pr} \rho_{qs}| &\lesssim \sum_{1 \leq p < q \leq m} \rho_{pq}^2\\
 \sum_{\substack{1\leq p , q, r, s \leq m \\ |\{p\} \cup \{q\} \cup \{r\} \cup \{s\} | = 3}} |\rho_{pq}\rho_{rs} \rho_{pr} \rho_{qs}| &\lesssim \sum_{\substack{1 \leq p ,  q, r\leq m\\ p, q, r \text{ distinct}}}| \rho_{pq}\rho_{pr}\rho_{qr}|\\
 \sum_{\substack{1\leq p , q, r, s \leq m \\ |\{p\} \cup \{q\} \cup \{r\} \cup \{s\} | = 4}}  |\rho_{pq}\rho_{rs} \rho_{pr} \rho_{qs}| &= \sum_{\substack{1\leq p , q, r, s \leq m \\  p, q, r,  s \text{ distinct}}}  |\rho_{pq}\rho_{rs} \rho_{pr} \rho_{qs}|.
\end{align*}
In light of \lemref{threebounds}, applying these bounds to \eqref{whatshouldbetheno} implies \eqref{3rd2ndexpand}.

$|{\boldsymbol \lambda}| = 2$: The only ${\boldsymbol \lambda}$'s with $|{\boldsymbol \lambda}| = 2$ and $\lambda_3 \not = 2$ are $(1, 1, 0), (2, 0, 0), (0, 2, 0),  (1, 0, 1)$ and $(0, 1, 1)$. For the first three of these since $\lambda_3 = 0$,  by \eqref{bddbyrhoorrho2'} and \lemref{prodOrderLemma} the quantity in \eqref{termforeachlambda} equals
\begin{equation} \label{1stlambda=2}
O(n^{-2}) \sum_{\substack{1 \leq p < q \leq m\\ 1 \leq r < s \leq m}} \rho^2_{pq}\rho^2_{rs} = O(n^{-2})\|\cormat - \iden\|_F^4.
\end{equation}
 For ${\boldsymbol \lambda} =  (1, 0, 1)$, with \lemref{df} the quantity in  \eqref{termforeachlambda} equals
\begin{equation} \label{lambda101}
4 \sum_{\substack{  1\leq p < q \leq m\\ 1\leq r < s \leq m }} \rho_{pq}\rho_{rs} \mathbb{E}[\bar{S}_{pp} \bar{S}_{pq} \bar{S}_{rr} \bar{S}_{rs}].
\end{equation}
By simple argument as in the proof of \lemref{prodOrderLemma} and Corollary~\ref{cor:momentfact}, it is not hard to see that
\begin{align}
&\mathbb{E}[\bar{S}_{pp} \bar{S}_{pq} \bar{S}_{rr} \bar{S}_{rs}] \notag\\
&= \frac{n(n-1)}{n^4}\Biggl\{ \mathbb{E}[(X_p^2 -1)(X_r^2 -1)]\mathbb{E}[(X_p X_q - \rho_{pq})(X_r X_s - \rho_{rs})]+\notag\\
 &\quad \mathbb{E}[(X_p^2 -1)(X_p X_q - \rho_{pq})]\mathbb{E}[(X_r^2 -1)(X_r X_s - \rho_{rs})] +\notag\\
& \quad \mathbb{E}[(X_p^2 -1)(X_r X_s - \rho_{rs})]\mathbb{E}[(X_r^2 -1)(X_p X_q - \rho_{pq})]\Biggr\}  +
 O(n^{-3}) \notag\\
&= O(n^{-2})  (  2 \rho_{pr}^2 (\rho_{ps} \rho_{qr} + \rho_{pr} \rho_{qs}) + 4\rho_{pq}\rho_{rs} + 4 \rho_{pr} \rho_{ps}\rho_{rp} \rho_{rq} )+ O(n^{-3}). \label{tobesubinto}
\end{align}
Substituting \eqref{tobesubinto} into \eqref{lambda101} we get

\begin{align}
&4 \sum_{\substack{  1\leq p < q \leq m\\ 1\leq r < s \leq m }} \rho_{pq}\rho_{rs} \mathbb{E}[\bar{S}_{pp} \bar{S}_{pq} \bar{S}_{rr} \bar{S}_{rs}] \notag\\
&= 4 \sum_{\substack{  1\leq p < q \leq m\\ 1\leq r < s \leq m }} \rho_{pq}\rho_{rs}
\{ O(n^{-2})  (  2 \rho_{pr}^2 (\rho_{ps} \rho_{qr} + \rho_{pr} \rho_{qs}) + 4\rho_{pq}\rho_{rs} + 4 \rho_{pr} \rho_{ps}\rho_{rp} \rho_{rq} )+ O(n^{-3})\}\notag \\
& \leq  O(n^{-2})\sum_{k = 2}^4 \sum_{\substack{1\leq p , q, r, s \leq m \\ |\{p\} \cup \{q\} \cup \{r\} \cup \{s\} | = k}} |\rho_{pq}\rho_{rs} \rho_{pr} \rho_{qs}| +  O(n^{-3})\left(\sum_{\substack{  1\leq p < q \leq m\\ 1\leq r < s \leq m }}|\rho_{pq} \rho_{rs}|\right)\notag\\
& \leq  O(n^{-2})(\|\cormat - \iden\|_F^4 + \|\cormat - \iden\|_F^2 )+  O(m^2 n^{-3}) \|\cormat  - \iden\|_F^2   \label{2ndlambda=2},
\end{align}
where the last two inequalities make use of similar arguments that prove \eqref{3rd2ndexpand}. By a symmetry argument the same estimate holds for ${\boldsymbol \lambda} = (0, 1, 1)$. Both \eqref{1stlambda=2} and \eqref{2ndlambda=2} are less than the right hand side of \eqref{bddinII_2bdd}.

It remains to form an estimate for \eqref{termfor112}. Note that
\begin{equation*}
\mathbb{E}\left[ \left( \sum_{1 \leq p < q \leq m} \bar{S}_{pp} \bar{S}_{qq} \bar{S}_{pq}^2\right)^2 \right] = \sum_{ \substack{1 \leq p_d < q_d \leq m\\d = 1, 2\\ |\cup_{d = 1}^2 \{p_d, q_d\}| = 4}}
\mathbb{E}\left[ \prod_{d = 1}^2\bar{S}_{p_dp_d} \bar{S}_{q_dq_d} \bar{S}_{p_dq_d}^2\right]
+ O\left(\frac{m^3}{n^4}\right),
\end{equation*}
where the $O(n^{-4})$ term comes from an argument similar to the proof of \lemref{prodOrderLemma}, and the $O(m^3)$ term comes from that the $O(m^3)$ many choices for $p_1, q_1, p_2, q_2$ when $|\cup_{d = 1}^2 \{p_d, q_d\}| \leq 3$. Hence it now suffices to show the first term on the right hand side of the preceding display is less than the RHS of  \eqref{bddinII_2bdd}. The argument is simple but a little tedious so we just sketch it here: By a similar argument as in the proof of \lemref{prodOrderLemma} we must have
\begin{multline} \label{trouble3}
\sum_{ \substack{1 \leq p_d < q_d \leq m\\d = 1, 2\\ |\cup_{d = 1}^2 \{p_d, q_d\}| = 4}}
\mathbb{E}\left[ \prod_{d = 1}^2\bar{S}_{p_dp_d} \bar{S}_{q_dq_d} \bar{S}_{p_dq_d}^2\right] = O\left(\frac{m^4}{n^5}\right) +  \\
\sum_{ \substack{1 \leq p_d < q_d \leq m\\d = 1, 2\\ |\cup_{d = 1}^2 \{p_d, q_d\}| = 4}}O\Biggl(  \frac{\sum_{\substack{k \in  \{  p_2, q_1, q_2\}}}\mathbb{E}\left[  (X_{p_1}^2 - 1) (X_k^2 - 1) \right] + \sum_{d'  =1}^2\mathbb{E}\left[ (X_{p_1}^2 - 1)(X_{p_{d'}}X_{q_{d'}} - \rho_{p_{d'}  q_{d'}})\right]}{n^4}\Biggr),
\end{multline}
where the expectations on the right come from the fact that $(X_{p_1}^2 - 1)$ must pair with one of $(X_{p_2}^2 - 1)$, $(X_{q_1}^2 - 1)$, $(X_{q_2}^2 - 1)$, $(X_{p_1} X_{q_1} - \rho_{p_1 q_1})$ and $(X_{p_2} X_{q_2} - \rho_{p_2 q_2})$  as in \eqref{manypairings}.  By Corollary~\ref{cor:momentfact},  for $k$ equals $p_2$, $q_1$ or $q_2$, it must be that
\begin{equation} \label{trouble1}
\sum_{ \substack{1 \leq p_d < q_d \leq m\\d = 1, 2\\ |\cup_{d = 1}^2 \{p_d, q_d\}| = 4}}
\mathbb{E}\left[  (X_{p_1}^2 - 1) (X_k^2 - 1) \right] = \sum_{ \substack{1 \leq p_d < q_d \leq m\\d = 1, 2\\ |\cup_{d = 1}^2 \{p_d, q_d\}| = 4}} 2 \rho_{p_1 k}^2 = O(m^2 \|\cormat - \iden\|_F^2);
\end{equation}
for $d'$ equals $1$ or $2$, it must be that
\begin{multline}\label{trouble2}
\sum_{ \substack{1 \leq p_d < q_d \leq m\\d = 1, 2\\ |\cup_{d = 1}^2 \{p_d, q_d\}| = 4}}
\mathbb{E}\left[ (X_{p_1}^2 - 1)(X_{p_{d'}}X_{q_{d'}} - \rho_{p_{d'}  q_{d'}}) \right] \\
= \sum_{ \substack{1 \leq p_d < q_d \leq m\\d = 1, 2\\ |\cup_{d = 1}^2 \{p_d, q_d\}| = 4}}\rho_{p_1 q_{d'}} \rho_{p_1 p_{d'}} + \rho_{p_1 p_{d'}}\rho_{p_1 q_{d'}} = O(m^3) \|\cormat - \iden\|_F.
\end{multline}
Substituting \eqref{trouble1}  and \eqref{trouble2} into \eqref{trouble3} gives that
\[
\sum_{ \substack{1 \leq p_d < q_d \leq m\\d = 1, 2\\ |\cup_{d = 1}^2 \{p_d, q_d\}| = 4}}
\mathbb{E}\left[ \prod_{d = 1}^2\bar{S}_{p_dp_d} \bar{S}_{q_dq_d} \bar{S}_{p_dq_d}^2\right]  =
O\left(\frac{m^4}{n^5} + \frac{m^2 \|\cormat - \iden\|_F^2}{n^4}+ \frac{m^3 \|\cormat - \iden\|_F}{n^4}\right)
\]
which gives us an estimate less than the one required.
\end{proof}

\begin{proof}[Proof of \eqref{bddexpII1} and \lemref{zetabounds}]
As described by the main text, with the help of the \texttt{Expectation} function provided by \texttt{mathematica}, we easily find  that
\[
\mathbb{E}[h_{1, pq}] = \frac{4 {n \choose 4}}{n^2 {n -1 \choose 3}} (1 + \rho_{pq}^2) = \frac{1 + \rho_{pq}^2}{n},
\]
\[
\mathbb{E}[h_{2, pq}] = \mathbb{E}[\bar{h}_{2, pq}] = \frac{8{n \choose 4}}{n^3 {n -1 \choose 3}} (1 + 3\rho_{pq}^2)=  \frac{2(1 + 3 \rho_{pq}^2)}{n^2}\ \ \text{ and}
\]
\[
\mathbb{E}[h_{3, pq}] =  \mathbb{E}[\bar{h}_{3, pq}] = \left(\frac{24}{n^4 {n -2 \choose 2}}+\frac{40}{{n-1 \choose 3} n^4} \right) {n \choose 4}(1 + 5\rho_{pq}^2)
= \frac{2(4+ n) (1 + 5 \rho_{pq}^2)}{n^3}
\]
for each pair $1 \leq p < q \leq m$. Collecting these and summing over all $1 \leq p < q \leq m$ gives the expectation of the kernel $h$ in \eqref{bddexpII1}. We will now prove \lemref{zetabounds}, first dealing with \eqref{zeta4est}.
 Note that $g_4 (\cdot)$ simply equals the kernel function $h$, in particular for a set of distinct sample indices $i, j, k, l \in [n]$ we have
\begin{align*}
&g_4({\bf X}_i, {\bf X}_j {\bf X}_k, {\bf X}_l) = h({\bf X}_i, {\bf X}_j {\bf X}_k, {\bf X}_l)\\
& =  O(1) \times\\
&\sum_{1 \leq p < q \leq m}\\
& \Biggl \{ - \sum_{\substack{\{i', j', k'\} \\\subset \{i, j, k, l\}\\ i', j', k' \text{ distinct } \\\text{ and unordered}}}
\sum_{\pi \in \mathcal{S}_3}
 \Bigl[(X_{p\pi(i')}^2 - 1)(X_{p\pi(j')}X_{q\pi(j')} - \rho_{pq})(X_{p\pi(k')}X_{q\pi(k')} - \rho_{pq}) \Bigr] \\
& -  \sum_{\substack{\{i', j', k'\} \\\subset \{i, j, k, l\}\\ i', j', k' \text{ distinct } \\\text{ and unordered}}}
\sum_{\pi \in \mathcal{S}_3}
 \Bigl[(X_{q\pi(i')}^2 - 1)(X_{p\pi(j')}X_{q\pi(j')} - \rho_{pq})(X_{p\pi(k')}X_{q\pi(k')} - \rho_{pq}) \Bigr]\\
& + \sum_{\pi \in \mathcal{S}_4}  \Bigl[ (X_{p\pi(i)}^2 - 1)(X_{p\pi(j)}^2 - 1)(X_{p\pi(k)} X_{q\pi(k)} - \rho_{pq} )(X_{p\pi(l)} X_{q\pi(l)} - \rho_{pq}) \Bigr] \\
& + \sum_{\pi \in \mathcal{S}_4}  \Bigl[ (X_{q\pi(i)}^2 - 1)(X_{q\pi(j)}^2 - 1)(X_{p\pi(k)} X_{q\pi(k)} - \rho_{pq} )(X_{p\pi(l)} X_{q\pi(l)} - \rho_{pq}) \Bigr]  \Biggr\} \\
&+ \sum_{1 \leq p < q \leq m} t_4 ({\bf X}_{pq, i}, {\bf X}_{pq, j}, {\bf X}_{pq, k}, {\bf X}_{pq, l},  \rho_{pq})O(n^{-1})\\
&:= \tilde{g}_4 ({\bf X}_{ i}, {\bf X}_{j}, {\bf X}_{k}, {\bf X}_{l}) +  \sum_{1 \leq p < q \leq m} t_4 ({\bf X}_{pq, i}, {\bf X}_{pq, j}, {\bf X}_{pq, k}, {\bf X}_{pq, l},  \rho_{pq})O(n^{-1}),
\end{align*}
by collecting the $O(1)$ terms in the definition of $h_{2,  pq}, \bar{h}_{2,  pq}, h_{3,  pq}, \bar{h}_{3,  pq}$, where $ t_4 ( \cdot)$ is just a fixed polynomial in ${\bf X}_{pq, i}, {\bf X}_{pq, j}, {\bf X}_{pq, k}, {\bf X}_{pq, l},  \rho_{pq}$ whose form is irrelevant to us. Using the fact that $2|ab| \leq a^2 + b^2$ for all $a, b  \in \mathbb{R}$, we have
\begin{equation} \label{zeta4lesssim}
\zeta_4 \lesssim \mathbb{E}[ \tilde{g}_4 ({\bf X}_{ i}, {\bf X}_{j}, {\bf X}_{k}, {\bf X}_{l})^2]
 + O\left(\frac{m^4}{n^2}\right).
\end{equation}
A key observation  is that upon squaring and taking expectation, $\mathbb{E}[ \tilde{g}_4 ({\bf X}_{ i}, {\bf X}_{j}, {\bf X}_{k}, {\bf X}_{l})^2]$ must be a sum of finitely many terms, where for some sample indices $\tilde{i}, \tilde{j} \in \{i, j, k, l\}$, each of these terms  can be ``$\lesssim$" bounded  by  the form
\begin{equation} \label{mysteryform}
\sum_{\substack{1 \leq p_d < q_d \leq m\\ d = 1, 2}}\mathbb{E}[A(p_1 q_1, \tilde{i})B(p_2 q_2, \tilde{j})],
\end{equation}
where for any sample index $i \in [n]$ and variable indice $p, q  \in [m]$, $A(p, q, i)$ and $B(p, q, i)$ may equal to  one of
\[
X^2_{pi} - 1,  \ \ X^2_{qi} - 1,\ \  X_{pi}X_{qi} - \rho_{pq}.
\]
Now if $\tilde{i} \not= \tilde{j}$, the form in \eqref{mysteryform} equals zero. If $\tilde{i} = \tilde{j}$, the form in \eqref{mysteryform} can be bounded by
\[
 \sum_{\substack{1 \leq p_d < q_d \leq m\\ d = 1, 2 \\|\cup_{d = 1}^2 \{p_d, q_d\}| = 4}}\mathbb{E}[A(p_1 q_1, \tilde{i})B(p_2 q_2, \tilde{i})]+ O(m^3),
\]
and by applying Corollary~\ref{cor:momentfact}, we leave it for the reader to check that the leading term in the preceding display must be ``$\lesssim$" bounded by $m^3\|\cormat - \iden\|_F$. Summarizing this gives us the bound in \eqref{zeta4est}.

%
%Now we observe that if we expand all the products in $\{\cdot\}$ in the definition of $\tilde{g}_4$, $\tilde{g}_4$ must be a sum of finitely many terms each having the form
%\[
%\sum_{1 \leq p< q \leq m} \left(\text{a non-constant monomial in } \rho_{pq}, {\bf X}_{pq, i}, {\bf X}_{pq, j}, {\bf X}_{pq, k}, {\bf X}_{pq, l}\right).
%\]
%Since a product of two non-constant monomials is again a non-constant monomial,  $\tilde{g}_4({\bf X}_{ i}, {\bf X}_{j}, {\bf X}_{k}, {\bf X}_{l})^2$ equals a sum of finitely many terms each having the form
%\[
%\sum_{\substack{1 \leq p_d< q_d \leq m \\ d= 1, 2}} (\text{a non-constant monomial in }  \left(\rho_{p_dq_d}, {\bf X}_{p_dq_d, i}, {\bf X}_{p_dq_d, j}, {\bf X}_{p_dq_d, k}, {\bf X}_{p_dq_d, l})_{d = 1, 2} \right).
%\]
%In light of \thmref{isserlis},  an expectation of the preceding display must be bounded by
%\[
%\sum_{\substack{1 \leq p_d , q_d \leq m\\d = 1, 2}} |\rho_{p_1q_1}| = \sum_{\substack{1 \leq p_d , q_d \leq m\\d = 1, 2 \\ \cup_{d = 1}^2 |\{p_d, q_d\}|= 4}} |\rho_{pq}| + \sum_{\substack{1 \leq p_d , q_d \leq m\\d = 1, 2 \\ \cup_{d = 1}^2 |\{p_d, q_d\}|\leq 3}} |\rho_{pq}| \lesssim  m^3 (\|\cormat - \iden\|_F + 1)
%\]
%up to constants. The above observations with \eqref{zeta4lesssim} give the bound in \eqref{zeta4est}.

Now we get to \eqref{zeta1est}-\eqref{zeta3est}. The functions $g_c(\cdot)$, $c = 1, \dots, 3$ for the kernel $h$  can be found by simply conditioning and taking expectation using \thmref{isserlis}. With the help of \texttt{mathematica}, they are found to be
\begin{align*}
&g_1 ({\bf X}_i) \\
=
&\sum_{p < q }\frac{1}{4n}\left\{(X_{pi}X_{qi} - \rho_{pq})^2 - (1 + \rho_{pq}^2)\right\} - \\
&\sum_{p < q } \frac{1 }{4 n} \left \{ (1 + \rho_{pq}^2)(X^2_{pi} +X^2_{qi} - 2) + 8 \rho_{pq}(X_{pi} X_{qi} - \rho_{pq})\right\}+\\
  &\sum_{p < q}t_1(X_{pi} ,X_{qi} ,  \rho_{pq})O(n^{-2})\\
:= &  \ \ \tilde{g}_1 ({\bf X}_i) + \sum_{p < q}t_1(X_{pi} ,X_{qi} ,  \rho_{pq})O(n^{-2}),
\end{align*}
\begin{align*}
&g_2 ({\bf X}_i, {\bf X}_j) \\
= & \ \ \frac{1}{4n} \sum_{p < q }\left\{(X_{pi}X_{qi} - \rho_{pq})^2+ (X_{pj}X_{qj} - \rho_{pq})^2 - 2(1 + \rho_{pq}^2)\right\} - \\
& \frac{1 }{12 n} \sum_{p < q }\Bigl\{\bigl[ \sum_{\pi \in \mathcal{S}_2} (X_{p \pi( i)}^2 - 1)(  X_{p \pi(j)}X_{q\pi( j)} - \rho_{pq})^2 +\\
 &2 (X_{p \pi( i)}^2 - 1) (X_{p \pi(i)}X_{q\pi( i)} - \rho_{pq})(X_{p \pi(j)}X_{q\pi( j)} - \rho_{pq})\bigr]+\\
&2\bigl[ (1+ \rho_{pq}^2)(X_{pi}^2 + X_{pj}^2 - 2) + 4 \rho_{pq} (X_{pi}X_{qi} + X_{pj}X_{qj}  - 2 \rho_{pq})\bigr]\Bigr\} - \\
& \frac{1 }{12 n} \sum_{p < q } \Bigl\{\bigl[ \sum_{\pi \in \mathcal{S}_2} (X_{q \pi( i)}^2 - 1)(  X_{p \pi(j)}X_{q\pi( j)} - \rho_{pq})^2 +\\
 &2 (X_{q \pi( i)}^2 - 1) (X_{p \pi(i)}X_{q\pi( i)} - \rho_{pq})(X_{p \pi(j)}X_{q\pi( j)} - \rho_{pq})\bigr]+\\
&2\bigl[ (1+ \rho_{pq}^2)(X_{qi}^2 + X_{qj}^2 - 2) + 4 \rho_{pq} (X_{pi}X_{qi} + X_{pj}X_{qj}  - 2 \rho_{pq})\bigr]\Bigr\} + \\
&\frac{1}{12n}\sum_{p< q} \Bigl\{ 4(X_{pi}X_{qi } - \rho_{pq})(X_{pj}X_{qj } - \rho_{pq}) \\
&+ 2(1 + \rho_{pq}^2)(X^2_{pi} - 1)(X^2_{pj} - 1) \\
&+ 8 \rho_{pq}[ (X_{pi}^2 -1 )(X_{pj}X_{qj} - \rho_{pq}) +(X_{pj}^2 -1 )(X_{pi}X_{qi} - \rho_{pq})] \Bigr\}+ \\
&\frac{1}{12n}\sum_{p< q} \Bigl\{ 4(X_{pi}X_{qi } - \rho_{pq})(X_{pj}X_{qj } - \rho_{pq}) \\
&+ 2(1 + \rho_{pq}^2)(X^2_{qi} - 1)(X^2_{qj} - 1) \\
&+ 8 \rho_{pq}[ (X_{qi}^2 -1 )(X_{pj}X_{qj} - \rho_{pq}) +(X_{qj}^2 -1 )(X_{pi}X_{qi} - \rho_{pq})] \Bigr\}+ \\
 &O(n^{-2}) \sum_{p < q}t_2({ X}_{pi} , X_{qi} ,  X_{pj} ,  X_{qj} , \rho_{pq})\\
:= &\ \ \tilde{g}_2 ({\bf X}_i , {\bf X}_j) + O(n^{-2}) \sum_{p < q}t_2({ X}_{pi} , X_{qi} ,  X_{pj} ,  X_{qj} , \rho_{pq})
\end{align*}
and
\begin{align*}
&g_3({\bf X}_i, {\bf X}_j, {\bf X}_k)\\
= &\sum_{p  < q} \frac{-1}{12} \sum_{\pi \in \mathcal{S}_3}(X^2_{p \pi(i)} - 1)(X_{p\pi(j)}X_{q\pi(j)}- \rho_{pq})(X_{p\pi(k)}X_{q\pi(k)}- \rho_{pq}) + \\
&\sum_{p  < q} \frac{-1}{12} \sum_{\pi \in \mathcal{S}_3}(X^2_{q \pi(i)} - 1)(X_{p\pi(j)}X_{q\pi(j)}- \rho_{pq})(X_{p\pi(k)}X_{q\pi(k)}- \rho_{pq}) + \\
&O(n^{-1}) \sum_{1 \leq p < q \leq m}t_3({ X}_{pi} , X_{qi} ,  X_{pj} ,  X_{qj} , X_{pk} ,  X_{qk} ,\rho_{pq})\\
:= &\tilde{g}_3 ({\bf X}_i, {\bf X}_j, {\bf X}_k) + O(n^{-1}) \sum_{1 \leq p < q \leq m}t_3({ X}_{pi} , X_{qi} ,  X_{pj} ,  X_{qj} , X_{pk} ,  X_{qk} ,\rho_{pq})
\end{align*}
Above,  $t_1 (\cdot), t_2(\cdot)$ and $t_3(\cdot)$ are simply three fixed polynomials in their respective arguments, and their forms are irrelevant to us. $\tilde{g}_1 (\cdot)$, $\tilde{g}_2 (\cdot)$ and $\tilde{g}_3 (\cdot)$ simply collect the terms that do not involve $t_1 (\cdot), t_2(\cdot)$ and $t_3(\cdot)$,  respectively. Note that by the same fact that $2 |ab| \leq a^2 + b^2$ for  $a, b \in \mathbb{R}$,
\begin{align}
\zeta_1 &\lesssim \mathbb{E}\left[\tilde{g}_1 ({\bf X}_i)^2\right] + O\left( \frac{m^4}{n^4}\right) \label{lastlineofzeta1}, \\
\zeta_2 & \lesssim \mathbb{E}\left[\tilde{g}_2 ({\bf X}_i, {\bf X}_j)^2\right] + O\left( \frac{m^4}{n^4}\right),  \label{lastlineofzeta2}\\
\zeta_3  & \lesssim  \mathbb{E}\left[\tilde{g}_3 ({\bf X}_i, {\bf X}_j, {\bf X}_k)^2\right] + O\left( \frac{m^4}{n^2}\right). \label{lastlineofzeta3}
\end{align}
Note that in the definition of $\tilde{g}_1$, there is a leading factor of order $n^{-1}$. By applying \thmref{isserlis} with the help of \texttt{mathematica}, we  find that $ \mathbb{E}[\tilde{g}_1 ({\bf X}_i)^2]$ is a finite sum of terms each, up to a factor of order $n^{-2}$,  can be bounded by one of the forms:
\begin{equation} \label{bddoneofforms}
\sum_{1 \leq r < s \leq m}\sum_{1 \leq p < q \leq m} \rho_{pq}^2 , \quad  \sum_{1 \leq p, q, r, s \leq m} \rho_{pq}^2\rho_{rs}^2, \sum_{1 \leq p, q, r, s \leq m} |\rho_{pq}\rho_{rs}\rho_{pr}\rho_{qs}|.
\end{equation}
We leave it for the reader to check that with the two estimates in \lemref{threebounds} and the familiar trick of decomposing a sum according to the cardinality of an index set as in  \eqref{DefSk},  the forms in \eqref{bddoneofforms} can all  be bounded by
\[
\|\cormat - \iden\|_F^4 + m^2 (1 + \|\cormat - \iden\|_F^2)
\]
up to constants, and hence from \eqref{lastlineofzeta1} we obtain the estimate for $\zeta_1$ in \eqref{zeta1est}.  By the same token, with the help of \texttt{Mathematica} we observe that
\begin{align*}
 \mathbb{E}\left[\tilde{g}_2 ({\bf X}_i, {\bf X}_j) ^2\right] &\lesssim O(n^{-2})  \sum_{ 1 \leq p, q, r, s \leq m} |\rho_{pq}| \\
 \mathbb{E}\left[\left(\tilde{g}_3 ({\bf X}_i, {\bf X}_j, {\bf X}_k) \right)^2\right]  &\lesssim   \sum_{1 \leq p, q, r, s \leq m} \rho_{pq}^2\rho_{rs}^2 +\sum_{1 \leq p, q, r, s \leq m} |\rho_{pq}\rho_{rs}\rho_{pr}\rho_{qs}|,
\end{align*}
again by the index set decomposition trick and \lemref{threebounds} we have
\begin{align}
 \mathbb{E}\left[\tilde{g}_2 ({\bf X}_i, {\bf X}_j) ^2\right] &\lesssim n^{-2}m^3 (\|\cormat - \iden\|_F + 1) \label{tildeg2expbound} \text{ and }\\
 \mathbb{E}\left[\left(\tilde{g}_3 ({\bf X}_i, {\bf X}_j, {\bf X}_k) \right)^2\right]  &\lesssim    \|\cormat - \iden\|_F^4 + m^2 (1 + \|\cormat - \iden\|_F^2) \label{tildeg3expbound}.
\end{align}
Collecting \eqref{lastlineofzeta2},\eqref{lastlineofzeta3}, \eqref{tildeg2expbound} and \eqref{tildeg3expbound} gives us \eqref{zeta2est} and \eqref{zeta3est}.
%and by the computation of \texttt{mathematica} we observe that the first term on the right hand side of the preceding display is a sum of finitely many terms, and up to a factor of $O(n^{-2})$, each can be bounded by
%\[
%\sum_{ 1 \leq p, q, r, s \leq m} |\rho_{pq}| \lesssim m^3 (\|\cormat - \iden\|_F + 1)
%\]
%based  on our sum decomposing trick, giving us \eqref{zeta2est}.
%
%\[
% \quad  \sum_{1 \leq p, q, r, s \leq m} \rho_{pq}^2\rho_{rs}^2, \sum_{1 \leq p, q, r, s \leq m} \rho_{pq}\rho_{rs}\rho_{pr}\rho_{qs}
%\]

\end{proof}

\bibliographystyle{ba}

\bibliography{OptSchott_bib}

\end{document}